\DeclareSymbolFont{largesymbols}{OMX}{yhex}{m}{n}
\DeclareMathAccent{\wideparen}{\mathord}{largesymbols}{"F3}
\theoremstyle{plain}
\newtheorem{thm}{Theorem}[subsection]
\theoremstyle{definition}
\newtheorem{defn}[thm]{Definition}
\newtheorem{lemma}[thm]{Lemma}
\newtheorem{prop}[thm]{Proposition}
\newtheorem{cor}[thm]{Corollary}
\newtheorem{conj}[thm]{Conjecture}
\newtheorem{MainThm}{Theorem}
\title{Normality of the dual nilcone in positive characteristic}
\author{Richard Mathers}
\begin{document}

\setcounter{secnumdepth}{3}
\setcounter{tocdepth}{3}

\maketitle

\begin{abstract}
  
Let $G$ be a connected semisimple algebraic group of adjoint type defined over an algebraically closed field $K$ of positive characteristic. We demonstrate that the dual nilcone $\mathcal{N}^* \subseteq \mathfrak{g}^*$ is a normal variety in certain positive characteristics which are not very good, and that in these cases the Springer map $\mu: T^*\mathcal{B} \to \mathcal{N}$ is a resolution of singularities. \\

As an application, we extend the results of Ardakov and Wadsley on representations of $p$-adic Lie groups. Under further restrictions on the characteristic, we show that the canonical dimension of a coadmissible representation of a semisimple $p$-adic Lie group in a $p$-adic Banach space is either zero or at least half the dimension of a nonzero coadjoint orbit.
\end{abstract}

\tableofcontents

\newpage

\section{Introduction}

Let $G$ be a connected semisimple algebraic group of adjoint type defined over an algebraically closed field $K$ of positive characteristic $p > 0$. In case the characteristic of $K$ is very good for $G$, which, broadly speaking, implies that $G$ is not of type $A$ and $p > 5$, it is known that the dual nilpotent cone $\mathcal{N}^* \subseteq \mathfrak{g}^*$ is a normal variety, and it admits a desingularisation $\mu: T^*\mathcal{B} \to \mathcal{N}$ from the cotangent bundle of the flag variety $\mathcal{B}$ of $G$; the so-called $\emph{Springer resolution}$ of $\mathcal{N}$. \\

When $p$ is small, the classical proofs of these results break down. The goal of this paper is to investigate in which bad characteristics the dual nilcone $\mathcal{N}^*$ remains a normal variety and the Springer map is a resolution of singularities. \\

In case $G$ is of type $A_n$, the picture is a little different. Here, the classical proofs are valid when $p$ does not divide $n+1$. We have the following main theorems: \\ 

\begin{MainThm}\label{thm a}
Let $G = PGL_n$ and suppose $p|n$. Then the dual nilpotent cone $\mathcal{N}^* \subseteq \mathfrak{g}^*$ is a normal variety.
\end{MainThm}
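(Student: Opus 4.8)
The plan is to reduce the theorem to the classically known normality of the ordinary nilpotent cone of $\mathfrak{gl}_n$, the point being that although $\mathfrak{pgl}_n$ itself is badly behaved when $p \mid n$ (it carries no non-degenerate invariant bilinear form, and $\mathfrak{pgl}_n \not\cong \mathfrak{sl}_n$), its \emph{dual} is still of the classical type, so that the machinery available for $GL_n$ applies.

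First I would identify $\mathfrak{g}^* = \mathfrak{pgl}_n^*$ as a $G$-variety. Since $\mathfrak{pgl}_n = \mathfrak{gl}_n/\mathfrak{z}$ with $\mathfrak{z} = K\cdot\mathrm{Id}$ the centre, $\mathfrak{g}^*$ is the annihilator of $\mathfrak{z}$ in $\mathfrak{gl}_n^*$. The trace form $(X,Y)\mapsto\mathrm{tr}(XY)$ is a non-degenerate $GL_n$-invariant form on $\mathfrak{gl}_n$, hence induces a $GL_n$-equivariant (so $PGL_n$-equivariant) isomorphism $\mathfrak{gl}_n \xrightarrow{\sim} \mathfrak{gl}_n^*$ carrying $\ker(\mathrm{tr}) = \mathfrak{sl}_n$ onto this annihilator. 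Thus $\mathfrak{g}^* \cong \mathfrak{sl}_n$ as a $G$-variety, with the coadjoint action going over to the conjugation action of $PGL_n = GL_n/\mathbb{G}_m$ on $\mathfrak{sl}_n$. (This is exactly the place where the $p\mid n$ obstruction for $\mathfrak{pgl}_n$ evaporates: $\mathfrak{z} \subseteq \mathfrak{sl}_n$, so $\mathfrak{sl}_n$ and $\mathfrak{pgl}_n$ are genuinely different $G$-modules, yet $\mathfrak{pgl}_n^*$ is the nice one.)

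Next I would transport $\mathcal{N}^*$. As the null-cone of $G$ acting on $\mathfrak{g}^*$ — the common zero locus of the positive-degree homogeneous elements of $K[\mathfrak{g}^*]^G$ — it is an intrinsic invariant of the $G$-variety structure, so under the isomorphism above it becomes the null-cone of the conjugation action on $\mathfrak{sl}_n$. An element $X \in \mathfrak{sl}_n$ lies in this null-cone iff $0 \in \overline{GL_n\cdot X}$; since the coefficients of the characteristic polynomial are conjugation-invariant regular functions, this forces the characteristic polynomial of $X$ to equal $t^n$, i.e. $X$ to be nilpotent, and conversely every nilpotent matrix is unstable by rescaling. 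Hence $\mathcal{N}^* \cong \mathcal{N}(\mathfrak{gl}_n)$ as (reduced, irreducible) varieties, the latter being the variety of nilpotent $n\times n$ matrices, the closure of the regular nilpotent orbit $\mathcal{O}_{\mathrm{reg}}$. Moreover the same dictionary turns the moment map $T^*\mathcal{B}\to\mathfrak{g}^*$ into the classical Springer resolution of $\mathcal{N}(\mathfrak{gl}_n)$, since the fibre $(\mathfrak{g}/\mathfrak{b})^*$ over a Borel corresponds under the trace form to the nilradical opposite to $\mathfrak{b}$. Now I would invoke the theorem of Kraft–Procesi in characteristic zero and Donkin in arbitrary characteristic: the closure of every nilpotent $GL_n$-orbit in $\mathfrak{gl}_n$ is a normal variety; in particular $\mathcal{N}(\mathfrak{gl}_n) = \overline{\mathcal{O}_{\mathrm{reg}}}$ is normal, and the theorem follows by transporting back.

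The only genuine obstacle is the identification in the second step — recognising that passing to the dual removes the $p\mid n$ pathology of $\mathfrak{pgl}_n$ and places us squarely in the classical $GL_n$ setting. One must be a little careful that the trace-form isomorphism is genuinely $G$-equivariant and that the null-cone is an intrinsic invariant of the $G$-variety (so the transport is legitimate). If one wanted the scheme-theoretic statement rather than merely the statement about varieties, one would additionally need that the restriction map $K[\mathfrak{gl}_n]^{GL_n}\to K[\mathfrak{sl}_n]^{GL_n}$ is surjective, so that $\mathcal{N}^*$ and $\mathcal{N}(\mathfrak{gl}_n)$ agree as closed subschemes and not just as varieties; but for the conclusion as phrased, the reduced identification suffices.
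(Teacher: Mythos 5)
Your proof is correct and takes a genuinely different route from the paper. Your key observation --- that the trace form on $\mathfrak{gl}_n$ induces a $PGL_n$-equivariant linear isomorphism $\mathfrak{pgl}_n^* \cong \mathfrak{sl}_n$, so that passing to the dual replaces the badly-behaved $PGL_n$-module $\mathfrak{pgl}_n$ by a subspace of $\mathfrak{gl}_n$ --- reduces Theorem~\ref{thm a} directly to Donkin's theorem that nilpotent orbit closures in $\mathfrak{gl}_n$ are normal in all characteristics. This is a clean reduction to a classical result and is, as a stand-alone proof, considerably shorter. The paper instead proves normality from scratch via a Serre-type criterion (Theorem~\ref{important lemma}): it first shows $S(\mathfrak{h})^W$ is a polynomial ring (Theorem~\ref{polynomial theorem}), then uses a Kostant-type freeness argument (Lemma~\ref{Kostant}) to establish that $\mathcal{N}^*$ is Cohen--Macaulay, and finally verifies that the non-regular locus has codimension at least two (Proposition~\ref{codimension prop}). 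The trade-off is that the machinery the paper builds along the way --- polynomiality of $S(\mathfrak{h})^W$, freeness of $S(\mathfrak{g})$ over $S(\mathfrak{g})^G$, the Springer resolution, and the global-sections isomorphism $\mathcal{O}(\mathcal{N}^*)\cong\mathcal{O}(T^*\mathcal{B})$ of Corollary~\ref{global sections isomorphism} --- is essential for the $p$-adic applications in Section~\ref{annals chapter}, and it also covers the $(G_2,2)$ case of Theorem~\ref{G2 case}, where no trace-form trick is available; your reduction is specific to type~$A$. One further remark worth keeping in mind: your argument makes visible that for $p\mid n$ the $G$-modules $\mathfrak{pgl}_n$ and $\mathfrak{pgl}_n^*\cong\mathfrak{sl}_n$ are genuinely non-isomorphic, so the ``$G$-equivariant isomorphism $\kappa:\mathfrak{g}\to\mathfrak{g}^*$'' that the paper invokes in several places should be read as a $G$-equivariant isomorphism of \emph{varieties} $\mathcal{N}\to\mathcal{N}^*$ (a Springer-type isomorphism), not a linear isomorphism of $G$-modules.
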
 \vspace{4.4 mm}

\begin{MainThm}\label{thm b}
Suppose $G$ is of type $G_2$ and $p = 2$. Then the dual nilpotent cone $\mathcal{N}^* \subseteq \mathfrak{g}^*$ is a normal variety.
\end{MainThm}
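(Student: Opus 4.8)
The plan is to use the realisation of $\mathcal{N}^*$ as the image of the moment (Springer) map $\mu\colon T^*\mathcal{B}\to\mathfrak{g}^*$, to check that $\mu$ is a proper birational morphism out of the smooth variety $T^*\mathcal{B}$, and then to deduce normality from the single assertion that the comorphism $\mu^\sharp\colon K[\mathfrak{g}^*]\to\Gamma(T^*\mathcal{B},\mathcal{O}_{T^*\mathcal{B}})$ is surjective. Indeed $\operatorname{Im}\mu^\sharp=K[\mathfrak{g}^*]/I(\mathcal{N}^*)=K[\mathcal{N}^*]$, so surjectivity means exactly $K[\mathcal{N}^*]=\Gamma(T^*\mathcal{B},\mathcal{O}_{T^*\mathcal{B}})$; since $T^*\mathcal{B}$ is normal, $\mu$ factors through the normalisation $\nu\colon\widetilde{\mathcal{N}^*}\to\mathcal{N}^*$, giving inclusions $K[\mathcal{N}^*]\subseteq K[\widetilde{\mathcal{N}^*}]\subseteq\Gamma(T^*\mathcal{B},\mathcal{O}_{T^*\mathcal{B}})$, and equality of the outer terms forces $\nu$ to be an isomorphism, i.e.\ $\mathcal{N}^*$ to be normal. (Should this route prove awkward, Serre's criterion $(R_1)+(S_2)$ is the fallback.)

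\textbf{Step 1: geometry of $\mu$.} Writing $T^*\mathcal{B}\cong G\times^B\mathfrak{b}^\perp$, the map $\mu$ is proper with image the closed $G$-stable cone $\mathcal{N}^*=G\cdot\mathfrak{b}^\perp$, irreducible of dimension $\dim T^*\mathcal{B}=2\dim\mathcal{B}=12$, hence of codimension $2$ in $\mathfrak{g}^*\cong\mathbb{A}^{14}$. I would then run through the nilpotent coadjoint orbits of $\mathfrak{g}_2^*$ in characteristic $2$ using an explicit Chevalley basis, with two aims: first, to identify the dense orbit $\mathcal{O}_{\mathrm{reg}}$ (of dimension $12$) and to verify that a representative functional vanishes on a unique Borel subalgebra, so that $\mu$ restricts to an isomorphism over $\mathcal{O}_{\mathrm{reg}}$ and is in particular birational; second, to compute the dimensions of the remaining orbits and confirm that $\mathcal{N}^*\setminus\mathcal{O}_{\mathrm{reg}}$ has codimension $\ge 2$ --- whence $(R_1)$ holds automatically, $\mathcal{N}^*$ being regular along the smooth orbit $\mathcal{O}_{\mathrm{reg}}$.

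\textbf{Step 2: surjectivity of $\mu^\sharp$.} This is the substantive part. Identifying $\Gamma(T^*\mathcal{B},\mathcal{O}_{T^*\mathcal{B}})$ with $\bigoplus_{n\ge 0}H^0(\mathcal{B},\mathcal{L}(S^n(\mathfrak{g}/\mathfrak{b})))$ and using that $K[\mathfrak{g}^*]=\operatorname{Sym}\mathfrak{g}$ is generated in degree $1$, surjectivity of $\mu^\sharp$ becomes the statement that this graded algebra is generated in degree $1$ by the image of the natural map $\mathfrak{g}\to H^0(\mathcal{B},\mathcal{L}(\mathfrak{g}/\mathfrak{b}))$. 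Because $\mathcal{L}(\mathfrak{g})$ is the trivial bundle (so $H^{>0}(\mathcal{B},\mathcal{L}(\mathfrak{g}))=0$), an induction on degree using the $B$-module filtrations of $\mathfrak{b}$, of $\mathfrak{g}/\mathfrak{b}$ and of their symmetric powers reduces this---via Koszul complexes on $\mathcal{B}$---to a finite list of vanishing statements for the cohomology of line bundles $\mathcal{L}(\lambda)$ on the $G_2$-flag variety in characteristic $2$ and for certain multiplication maps on global sections; the first of these is $H^1(\mathcal{B},\mathcal{L}(\mathfrak{b}))=0$, which is precisely surjectivity of $\mathfrak{g}\to H^0(\mathcal{B},\mathcal{L}(\mathfrak{g}/\mathfrak{b}))$. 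Two equivalent repackagings I would also pursue: (a) prove the Grauert--Riemenschneider-type vanishing $R^i\mu_*\mathcal{O}_{T^*\mathcal{B}}=0$ for $i>0$ together with $\mu_*\mathcal{O}_{T^*\mathcal{B}}=\mathcal{O}_{\mathcal{N}^*}$, for instance by exhibiting a Frobenius splitting of $T^*\mathcal{B}$ compatible with $\mu$ and with the zero section; (b) compute the invariant ring $K[\mathfrak{g}_2^*]^{G_2}$ in characteristic $2$ and show $\mathcal{N}^*$ is its reduced null-cone, exhibiting $\mathcal{N}^*$ as a reduced complete intersection of two homogeneous equations in $\mathbb{A}^{14}$, hence Cohen--Macaulay, which together with $(R_1)$ from Step 1 again yields normality.

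\textbf{The main obstacle.} The whole difficulty sits in Step 2: excluding global functions on $T^*\mathcal{B}$ beyond those pulled back from $\mathfrak{g}^*$ (equivalently, the flag-variety cohomology vanishing; equivalently, reducedness of the complete intersection; equivalently, the compatible Frobenius splitting). This is exactly where the classical proofs collapse for bad $p$, and it is where the special behaviour of $G_2$ at $p=2$ must be exploited: the non-split structure of the $7$-dimensional module and the embedding $G_2\hookrightarrow\operatorname{Sp}_6$, the degeneracy of the Killing form on $\mathfrak{g}_2$, and the composition factors of the Weyl modules of small highest weight all intervene, and it is a genuine low-rank, small-$p$ coincidence that they conspire to produce the required vanishing (and that, for instance, the subregular orbit stays in codimension $\ge 2$). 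The orbit computations of Step 1, though they have to be carried out afresh since coadjoint orbits in bad characteristic need not parallel their characteristic-zero analogues, should be routine for a group as small as $G_2$.
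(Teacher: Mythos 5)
Your proposal correctly identifies the pressure point---showing Cohen--Macaulayness of $\mathcal{N}^*$ (equivalently, surjectivity of $\mu^\sharp$, or the $\mu_*\mathcal{O}=\mathcal{O}$ plus higher-direct-image vanishing)---and your option (b) in Step~2, together with the $(R_1)+(S_2)$ fallback, is in fact the route the paper takes. But you stop precisely where the work begins: nowhere do you supply the input that makes the argument close. The paper's proof of Theorem~B reduces to the fact, due to Kemper--Malle (cited as \cite[Theorem~7.2]{KM} and packaged here as Theorem~\ref{G2 polynomial}), that $S(\mathfrak{h})^{W}$ \emph{is} a polynomial ring for $(G_2,2)$. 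Granting that, Kostant's freeness criterion (Theorem~\ref{freeness polynomial}) gives $S(\mathfrak{h})$ free over $S(\mathfrak{h})^W$, the Kac--Weisfeiler restriction isomorphism $S(\mathfrak{g})^G\cong S(\mathfrak{h})^W$ (Theorem~\ref{KW}, applicable since $(G_2,2)\neq(B,2)$) transfers this to $\mathfrak{g}$, Lemma~\ref{Kostant} upgrades it to freeness of $S(\mathfrak{g})$ over $S(\mathfrak{g}/\mathfrak{h})\otimes S(\mathfrak{g})^G$, and hence $\mathcal{O}(\mathcal{N}^*)$ is a free finite $S(\mathfrak{g}/\mathfrak{h})$-module, so Cohen--Macaulay. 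That is the missing engine of your Step~2; without naming it, the ``reduced complete intersection'' claim is an aspiration rather than an argument.

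Two further cautions. First, the primary strategy you lead with (prove $\mu^\sharp$ surjective first and deduce normality from the normalisation diagram) inverts the logical order in the paper: there, normality of $\mathcal{N}^*$ is established via the Cohen--Macaulay-plus-codimension criterion of \cite[Corollary~2.3]{BL} (quoted as Theorem~\ref{important lemma}), and only \emph{then} is $\mathcal{O}(\mathcal{N}^*)\cong\mathcal{O}(T^*\mathcal{B})$ deduced (Corollary~\ref{global sections isomorphism}) using integral closedness. Attempting to prove the global-sections isomorphism first, via cohomology vanishing for line bundles on the $G_2$-flag variety in characteristic~$2$, is a genuinely harder (and, in bad characteristic, delicate) path that you rightly flag but do not carry out. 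Second, your Step~1 codimension estimate is proposed as a case-by-case orbit calculation; the paper instead gives a uniform geometric argument (Proposition~\ref{codimension prop}) using rank-one parabolics $P_i$ and the auxiliary varieties $S_i$, valid whenever $p$ is nonspecial. It is worth noting that $p=2$ is nonspecial for $G_2$ (only $p=3$ is special, per Definition~\ref{nonspecial defn}), and this hypothesis is what licenses both the codimension bound and the restriction isomorphism. You should cite or verify these two facts explicitly before either your Step~1 or Step~2 can be considered closed.
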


As an application, let $p$ be a prime,  $G$ be a semisimple compact $p$-adic Lie group and let $K$ be a finite extension of $\mathbb{Q}_p$. Ardakov and Wadsley studied the coadmissible representations of $G$, which are finitely generated modules over the completed group ring $KG$ with coefficients in $K$, in \cite{AW}. These completed group rings may be realised as Iwasawa algebras, which are important objects in noncommutative Iwasawa theory. \\

One of the central results in \cite{AW} is an estimate for the canonical dimension of a coadmissible representation of a semisimple $p$-adic Lie group in a $p$-adic Banach space. When $p$ is very good for $G$, Ardakov and Wadsley showed that this canonical dimension is either zero or at least half the dimension of a nonzero coadjoint orbit. We extend their results to the case where $G = PGL_n$, $p|n$, and $n > 2$. The main result of this section is as follows: \\

\begin{MainThm}\label{thm c}
Let $G$ be a compact $p$-adic analytic group whose Lie algebra is semisimple. Suppose that $G = PGL_n$, $p|n$, and $n > 2$. Let $G_{\mathbb{C}}$ be a complex semisimple algebraic group with the same root system as $G$, and let $r$ be half the smallest possible dimension of a nonzero coadjoint $G_{\mathbb{C}}$-orbit. Then any coadmissible $KG$-module $M$ that is infinite-dimensional over $K$ satisfies $d(M) \geq r$.
\end{MainThm}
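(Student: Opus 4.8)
The plan is to run the argument of Ardakov--Wadsley \cite{AW} for very good primes, isolating the precise points at which the very good hypothesis is used and feeding in Theorem~\ref{thm a} together with the Springer resolution $\mu\colon T^*\mathcal{B}\to\mathcal{N}^*$ established in the earlier part of the paper. Recall the shape of their proof: for a coadmissible $KG$-module $M$ one chooses a good filtration (after passing, if necessary, through the affinoid enveloping algebra) for which the associated graded ring is a residue-field reduction of the Poisson algebra $K[\mathfrak{g}^*]$; the canonical dimension is then $d(M)=\dim \mathrm{Ch}(M)$, where $\mathrm{Ch}(M)\subseteq\mathfrak{g}^*$ is the characteristic variety, a closed conical subvariety, and $M$ is infinite-dimensional over $K$ exactly when $\mathrm{Ch}(M)\neq\{0\}$. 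So the first step is to recall these structural facts and to check that they require nothing beyond the existence of the restricted structure on $\mathfrak{g}=\mathfrak{pgl}_n$ and the usual description of the relevant central subalgebra of $U(\mathfrak{g})$; this is where the hypotheses $p\mid n$ and $n>2$ first matter, the case $n=2$, $p=2$ being genuinely pathological (already at the level of $\mathfrak{pgl}_2$, whose dimension is odd) and hence excluded.

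Second, I would carry out the reduction of $\mathrm{Ch}(M)$ into the dual nilcone. Twisting by a central character, one is reduced to the worst case, in which the characteristic variety lies in the fibre of $\mathfrak{g}^*\to\mathfrak{g}^*/\!/G$ over the origin, i.e.\ $\mathrm{Ch}(M)\subseteq\mathcal{N}^*$; this rests on a Kostant-type statement --- that $\mathrm{gr}$ of the appropriate central subalgebra surjects onto (a power of) the augmentation ideal of $K[\mathfrak{g}^*]^G$, equivalently that $K[\mathcal{N}^*]$ is reduced and irreducible and $K[\mathfrak{g}^*]$ is suitably flat over the invariants. In \cite{AW} this is exactly what the very good hypothesis is used to guarantee, and Theorem~\ref{thm a} now supplies the normality of $\mathcal{N}^*$ for $PGL_n$ with $p\mid n$; the cohomological consequences of the Springer resolution (in particular the identification of global functions on $T^*\mathcal{B}$ with $K[\mathcal{N}^*]$, and the attendant rational-singularity-type vanishing) replace the classical inputs wherever \cite{AW} use them. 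The work here is to recheck each lemma of \cite{AW} and confirm that its use of ``$p$ very good'' factors through ``$\mathcal{N}^*$ normal and $\mu$ a resolution.'' I expect this to be essentially formal but bookkeeping-heavy, and I regard the $\mathrm{Ch}(M)\subseteq\mathcal{N}^*$ step as the main obstacle, since it is the one most sensitive to the characteristic.

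Third, the dimension estimate. The variety $\mathrm{Ch}(M)$ is conical, nonzero, contained in $\mathcal{N}^*$, and coisotropic for the Lie--Poisson structure by Gabber's involutivity theorem (which is characteristic-free). Taking a top-dimensional component $W$ and a point $x\in W$ lying in the open coadjoint orbit $\mathcal{O}$ of $\mathcal{N}^*$ that meets $W$ densely, the locally closed subset $W\cap\mathcal{O}$ is coisotropic in the symplectic manifold $\mathcal{O}$, so $d(M)=\dim W\geq\dim(W\cap\mathcal{O})\geq\tfrac12\dim\mathcal{O}$. Since $x\neq 0$, $\mathcal{O}$ is a nonzero nilpotent coadjoint orbit, and the dimension of such an orbit is computed from the centraliser and hence from the root datum; as $G$ and $G_{\mathbb{C}}$ share a root system and the relevant orbit-dimension computations remain valid for $PGL_n$ with $p\mid n$ and $n>2$ (this is again where the exclusion $n=2$, $p=2$ is needed, and where one checks that no nonzero coadjoint orbit --- nilpotent or not --- drops below the bound coming from $G_{\mathbb{C}}$), one gets $\dim\mathcal{O}\geq 2r$ and therefore $d(M)\geq r$, as claimed.
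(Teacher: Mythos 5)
Your Step 3 contains a genuine gap: you invoke Gabber's involutivity theorem to conclude that the characteristic variety is coisotropic, and describe it as ``characteristic-free,'' but in the Ardakov--Wadsley framework the relevant associated graded ring is $S(\mathfrak{g}_k)$ over the residue field $k$ of positive characteristic, arising from a \emph{double} filtration (the PBW filtration together with $\pi$-adic reduction). Gabber's theorem, as stated, applies to a $\mathbb{Q}$-algebra with a single filtration, and it is precisely the unavailability of an involutivity statement in this doubly-filtered affinoid setting that forces the detour through $\widehat{\mathcal{D}}$-modules. The paper's proof of Theorem~\ref{9.10} localizes $M$ to a coherent $\widehat{\mathcal{D}^{\lambda}_{n,K}}$-module $\mathcal{M} = \text{Loc}^{\lambda}(M)$, uses the enhanced moment map $\beta$ to relate $\text{Ch}(\mathcal{M})$ to $\text{Ch}(M)$, and then invokes Bernstein's inequality for characteristic varieties of coherent $\widehat{\mathcal{D}}$-modules on $T^*\mathcal{B}$ (this is \cite[Theorem~7.5]{AW}, proved by an explicit geometric argument, not by involutivity) to get $\dim T_{X,x} \geq \dim\mathcal{B}$. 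Combining this with the differential inequality $\dim Y + \dim f^{-1}(y) \geq \dim T_{X,x}$ and the Springer fibre dimension formula (Proposition~\ref{9.8}) gives $d(M) \geq \dim\mathcal{B} - \dim\beta^{-1}(y) = \tfrac12\dim G\cdot y$, and then the case-by-case verification in Proposition~\ref{9.9} shows this is $\geq r$. Your final inequality is the same, but the route to the lower bound on $d(M)$ --- coisotropicity versus Bernstein's inequality --- is exactly where the heavy lifting happens, and the coisotropicity input is not available to you.

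A secondary point: this is where the normality of $\mathcal{N}^*$ and the Springer resolution actually earn their keep --- not in a Kostant-type statement about $\text{Ch}(M) \subseteq \mathcal{N}^*$ (which does use the polynomiality of $S(\mathfrak{t}_k)^{\textbf{W}_k}$, but only lightly), but in the proof of the Beilinson--Bernstein Theorem~\ref{6.10}, where the identification of $\Gamma(T^*\mathcal{B},\mathcal{O})$ with $\mathcal{O}(\mathcal{N}^*)$ (Corollary~\ref{global sections isomorphism}) is essential to proving exactness of the \v{C}ech complex. Finally, your diagnosis of the exclusion $n=2$ is incorrect: the dimension of $\mathfrak{pgl}_2$ being odd is irrelevant ($\dim\mathfrak{pgl}_n = n^2-1$ is odd whenever $n$ is even); the hypothesis $n>2$ is used in Proposition~\ref{6.9} to guarantee that homogeneous generators of $S(\mathfrak{t}_k)^{\textbf{W}_k}$ lift to $S(\mathfrak{t})^{\textbf{W}}$, following the argument of \cite[Proposition~5.1]{KM}.
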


$\textbf{Acknowledgments.}$ I would like to thank Konstantin Ardakov for suggesting this research project, and Kevin McGerty for his interest in my work and his helpful contributions.

\section{The nilpotent cone and the Springer resolution}\label{nilcone chapter}

\subsection{Characteristic}

In this section, we study the geometric structure of the nilpotent cone $\mathcal{N}$ of the Lie algebra $\mathfrak{g}$ of a reductive algebraic group $G$ in arbitrary characteristic. We begin with a discussion of the ordinary nilpotent cone, defined as a subvariety of $\mathfrak{g}$, and then give a characterisation of the dual nilpotent cone $\mathcal{N}^*$. 

Our treatment of the material on $\mathcal{N}$ is based on that of Jantzen in \cite{JN}. We generalise some of his arguments which are dependent on certain restrictions on the characteristic. Later, we will specialise further to the case $G = PGL_n$ and $p|n$ at certain points of the argument. The last subsection of the section discusses analogues of the results presented here when we consider a more general algebraic group $G$. \\

Let $\textbf{G}$ be a split reductive algebraic group scheme, defined over $\mathbb{Z}$, and $K$ an algebraically closed field of characteristic $p > 0$. Let $G := \textbf{G}(K)$. Let $\mathfrak{g}$ denote the Lie algebra of $G$ and $W(G)$ the Weyl group of $G$. When $G$ is clear from context, we will abbreviate $W(G)$ to $W$. Since $G$ is a linear algebraic group, we fix an embedding $G \subseteq GL(V)$ for some $n$-dimensional $K$-vector space $V$. \\

\begin{defn}
Let $\alpha_i$ be the simple roots of the root system $R$ of $G$, and let $\beta$ be the highest-weight root. Writing $\beta = \sum_i m_i \alpha_i$, $p$ is $\textit{bad}$ for $G$ if $p = m_i$ for some $i$. $p$ is $\emph{good}$ if $p$ is not bad. \\

The prime $p$ is $\emph{very good}$ if one of the following conditions hold: \\

(a) $G$ is not of type $A$ and $p$ is good, \\
(b) $G$ is of type $A_n$ and $p$ does not divide $n+1$. \\
\end{defn}

In practice, we have the following classification. In types $B, C$ and $D$, the only bad prime is 2. For the exceptional Lie algebras, the bad primes are 2 and 3 for types $E_6, E_7, F_4$ and $G_2$, and 2,3 and 5 for type $E_8$. In type $A$, there are no bad primes. For more details of this classification, see \cite[I.4.3]{SS}. \\

\begin{defn}\label{nonspecial defn}
A prime $p$ is $\emph{special}$ for $G$ if the pair (Dynkin diagram of $G$, $p$) lies in the following list:

(a) ($B$, 2), \\
(b) ($C$, 2), \\
(c) ($F_4$, 2), \\
(d) ($G_2$, 3).

A prime $p$ is $\emph{nonspecial}$ for $G$ if it is not special.
\end{defn}

This definition, and material on the importance of nonspecial primes, can be found in \cite[Section 5.6]{PS}.

\subsection{The $W$-invariants of $S(\mathfrak{h})$}

Let $G = PGL_n$ and suppose $p|n$. This short section investigates the structure of the invariants of the Weyl group action on the symmetric algebra $S(\mathfrak{h})$. \\

Let $\mathfrak{g}^*$ be the dual vector space of $\mathfrak{g}$. Since $G$ is of type $A$ and the prime $p$ is always good for $G$, there is a $G$-equivariant isomorphism $\kappa: \mathfrak{g} \to \mathfrak{g}^*$ by the argument in \cite[Section 6.5]{JN}. Since $\mathfrak{g}$ is a finite-dimensional vector space, we naturally identify the symmetric algebra $S(\mathfrak{g})$ and the algebra of polynomial functions $K[\mathfrak{g}^*]$. \\
 
Let $\mathfrak{h}$ be a fixed Cartan subalgebra of $\mathfrak{g}$. The Weyl group $W$ has a natural action on $\mathfrak{h}$, which can be extended linearly to an action of $W$ on the symmetric algebra $S(\mathfrak{h})$. The identification $S(\mathfrak{h}) \cong K[\mathfrak{h}^*]$ is compatible with the $W$-action. We begin this section by studying the $W$-invariants under this action. \\

\begin{thm}\label{polynomial theorem}
Suppose $G = PGL_n$ and $p|n$. Then $S(\mathfrak{h})^W$ is a polynomial ring.
\end{thm}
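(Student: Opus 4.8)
The plan is to make the $W$-action on $\mathfrak{h}$ completely explicit and then reduce the statement to a fact about the classical coinvariant algebra of the symmetric group. Since $\mathfrak{g}=\mathfrak{pgl}_n=\mathfrak{gl}_n/\mathfrak{z}$ with $\mathfrak{z}=K\cdot I_n$, I would take the Cartan subalgebra $\mathfrak{h}=\mathfrak{d}/\mathfrak{z}\cong K^n/K(1,\dots,1)$, where $\mathfrak{d}$ is the diagonal torus, so that $W=S_n$ acts by permuting coordinates. Applying $\mathrm{Sym}(-)$ to the projection $K^n\twoheadrightarrow K^n/K(1,\dots,1)$ identifies $S(\mathfrak{h})$ with $K[x_1,\dots,x_n]/(e_1)$, where $e_1=x_1+\dots+x_n$ and $S_n$ permutes the $x_i$; so it suffices to show that $A:=(K[x_1,\dots,x_n]/(e_1))^{S_n}$ is a polynomial ring. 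The case $(n,p)=(2,2)$ is trivial, since there $S_2$ acts trivially on $\mathfrak{h}$ and $A=K[x_1]$, so I would assume $n\geq 3$ henceforth.

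The crux is to prove that the images $\bar e_2,\dots,\bar e_n$ of the elementary symmetric polynomials form a homogeneous system of parameters for $B:=K[x_1,\dots,x_n]/(e_1)$. This holds because a common zero of $e_1,\dots,e_n$ in $\mathbb{A}^n$ forces $\prod_i(T-x_i)=T^n$, hence all $x_i=0$; so $\bar e_2,\dots,\bar e_n$ cut out the cone point of the $(n-1)$-dimensional ring $B$. In particular they are algebraically independent, so $R:=K[\bar e_2,\dots,\bar e_n]$ is a polynomial ring, $B$ is a finite $R$-module, and — $B$ being Cohen--Macaulay (a hypersurface) and $R$ regular of the same dimension — $B$ is in fact free over $R$. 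Its rank equals $\dim_K B/(\bar e_2,\dots,\bar e_n)=\dim_K K[x_1,\dots,x_n]/(e_1,\dots,e_n)$, the dimension of the $S_n$-coinvariant algebra of $K^n$; a Hilbert-series computation, using that $e_1,\dots,e_n$ is a regular sequence of degrees $1,\dots,n$, evaluates this to $n!$.

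I would then conclude by comparing subfields of $L:=\mathrm{Frac}(B)$. Freeness of $B$ of rank $n!$ over $R$ gives $[L:\mathrm{Frac}(R)]=n!$, while for $n\geq 3$ the $S_n$-action on $B$ (equivalently on $\mathfrak{h}$) is faithful — a nontrivial permutation does not fix $K^n/K(1,\dots,1)$ pointwise — so $L/\mathrm{Frac}(A)$ is Galois of degree $|S_n|=n!$. Since $\mathrm{Frac}(R)\subseteq\mathrm{Frac}(A)\subseteq L$, this forces $\mathrm{Frac}(A)=\mathrm{Frac}(R)$; as $A$ is integral over $R$ (being contained in the finite $R$-module $B$) and $R$ is a polynomial ring, hence integrally closed in its fraction field, we get $A=R$, which is polynomial. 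The main obstacle is precisely the point at which $p\mid n$ (so $p\mid|S_n|$) would ordinarily cause trouble: there is no Reynolds operator, so one cannot expect $A$ to be the quotient of the invariant ring $K[e_1,\dots,e_n]$ of the ambient polynomial ring. The argument avoids lifting invariants altogether, extracting $A=R$ instead from the numerical coincidence $\mathrm{rank}_R B=n!=|S_n|$, which itself rests on the elementary identity $V(e_1,\dots,e_n)=\{0\}$.
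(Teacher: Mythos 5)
Your argument is correct, and it takes a genuinely different route from the paper. The paper's proof also reduces to the quotient module $X = V/U$ and establishes faithfulness of the $S_n$-action for $n > 2$, but it then outsources the heart of the matter: it cites a result of Nakajima (\cite[Proposition 4.1]{Na}) to transfer polynomiality of $S(V)^{S_n}$ to $S(X)^{S_n}$, and invokes \cite[Proposition 5.1]{KM} to identify the images of $s_2,\dots,s_n$ as generators. You instead give a self-contained commutative-algebra proof: you show $\bar e_2,\dots,\bar e_n$ form a homogeneous system of parameters for $B = K[x_1,\dots,x_n]/(e_1)$ by the elementary observation $V(e_1,\dots,e_n)=\{0\}$, deduce freeness of $B$ over $R = K[\bar e_2,\dots,\bar e_n]$ from Cohen--Macaulayness of the hypersurface $B$ and regularity of $R$ in equal dimension, compute the rank as $n!$ via the Hilbert series, and then pin down $B^{S_n}=R$ by matching $[\mathrm{Frac}(B):\mathrm{Frac}(R)]=n!$ against the Galois degree $|S_n|=n!$ and using normality of $R$. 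This has the merit of simultaneously producing the stronger freeness statement (your $B$ is free over $B^{S_n}$ of rank $|W|$), which in the paper is instead obtained separately by citing Theorem \ref{freeness polynomial}; the paper's route is shorter on the page but leans on two external references. One small remark: you assert faithfulness for $n\geq 3$ without proof; it is easy (if $\sigma\neq 1$ then some $e_i-e_{\sigma(i)}$ has exactly one entry $1$ and one entry $-1$, which cannot lie in $K(1,\dots,1)$ once $n\geq 3$), and the paper does spell out a version of this, so you should too.
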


\begin{proof}
Recall the Weyl group $W$ is isomorphic to $S_n$, and let $\mathfrak{t}$ be the image of the diagonal matrices in $\mathfrak{g} = \mathfrak{pgl}_n$. Then $\mathfrak{t}$ is the quotient of the natural $S_n$-module $V$ with basis $\lbrace e_1, \cdots, e_n \rbrace$, permuted by $S_n$, by the trivial submodule $U := K(\sum_{i=1}^n e_i)$. Let $X = V/U$. The quotient map $V \to X$ induces a surjective map $S(V) \to S(X)$. \\

Suppose $p = n = 2$ and let $\lbrace \overline{e_1}, \overline{e_2} \rbrace$ be the images of the vector space basis $\lbrace e_1, e_2 \rbrace$ of $V$ inside $X$. Let $\sigma$ denote the non-identity element of $S_2$. Then $\sigma \cdot \overline{e_1} = \overline{e_2}$ and $\sigma \cdot \overline{e_2} = \overline{e_1}$. Since $\overline{e_1} + \overline{e_2} = 0$, it follows that $\overline{e_1} = \overline{e_2}$. Hence $S(X)^{S_2} = S(X)$, which is a polynomial ring. \\

Now suppose $n > 2$ and $p|n$. We claim that the $S_n$-action on $V$ and on $X$ is faithful. The $S_n$-action on $V$ is by permutation and therefore is faithful. To see the claim for the $S_n$-action on $X$, let $N := \lbrace g \in S_n \mid g \cdot x = x \text{ } \forall x \in S(X) \rbrace$ denote the kernel of the natural map $S_n \to S(X)$. \\

Suppose $g$ is some non-identity element of $N$. Then, relabelling the elements $\overline{e_i}$ if necessary, $g \cdot \overline{e_1} = \overline{e_2}$. Hence it suffices to show that $\overline{e_1} \neq \overline{e_2}$. If $\overline{e_1} = \overline{e_2}$, then since $\sum_{i=1}^n \overline{e_i} = 0$, $\sum_{i=2}^n \overline{e_i} = \overline{e_1}$ and $e_1 + \sum_{i=3}^n \overline{e_i} = \overline{e_2}$. Rearranging, $\sum_{i=3}^n \overline{e_i} = (p-2)\overline{e_1}$. Hence the set $\lbrace \overline{e_1}, \overline{e_3}, \cdots, \overline{e_{n-1}} \rbrace$ spans $X$, but $X$ is an $(n-1)$-dimensional vector space, a contradiction. It follows that the $S_n$-action on $X$ is faithful. \\

The ring of invariants $S(V)^{S_n}$ is generated by the elementary symmetric polynomials $s_1(e_1, \cdots,  e_n), \cdots, s_n(e_1, \cdots, e_n)$, which are algebraically independent by \cite[Section 6, Theorem 1]{Bo3}. Applying \cite[Proposition 4.1]{Na}, we see that $S(X)^{S_n}$ is also a polynomial ring. The proof of \cite[Proposition 5.1]{KM} also demonstrates that $S(X)^{S_n}$ is generated by the images of $s_2(e_1, \cdots,  e_n), \cdots, s_n(e_1, \cdots, e_n)$ under the map $S(V) \to S(X)$. \\

To finish the proof, it suffices to note that we may identify $\mathfrak{t} \cong \mathfrak{h}$ and that $\mathfrak{h} \cong V/U = X$.
\end{proof}

We state a version of Kostant's freeness theorem that will be useful for our applications. \\

\begin{thm}\label{freeness polynomial}
$S(\mathfrak{h})$ is a free $S(\mathfrak{h})^W$-module if and only if $S(\mathfrak{h})^W$ is a polynomial ring.
\end{thm}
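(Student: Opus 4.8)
The plan is to deduce both implications from general facts about the finite ring extension $B := S(\mathfrak{h})^W \subseteq A := S(\mathfrak{h})$, where both rings are non-negatively graded Noetherian $K$-algebras with degree-$0$ part $K$ (place $\mathfrak{h}$ in degree $1$), using no input from reflection-group theory — which is the point, since in positive characteristic the Shephard--Todd--Chevalley theorem is unavailable and $S(\mathfrak{h})^W$ polynomial need not force $W$ to act as a reflection group. Since $W$ is finite, every element of $A$ satisfies a monic polynomial with $W$-invariant coefficients, so $A$ is integral over $B$; being a finitely generated $K$-algebra, $A$ is then a finitely generated $B$-module, and in particular $n := \dim_K \mathfrak{h} = \dim A = \dim B$.

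For the implication ``$S(\mathfrak{h})^W$ polynomial $\Rightarrow S(\mathfrak{h})$ free'', I would argue as follows. A polynomial ring is regular, so $B$ has finite global dimension and every finitely generated graded $B$-module has finite projective dimension. Next I would check that $A$ is a maximal Cohen--Macaulay $B$-module: the quotient $A/B_+A$ is a finite-dimensional $K$-algebra, hence Artinian, so its irrelevant ideal $A_+/B_+A$ is nilpotent, which gives $\sqrt{B_+A} = A_+$; therefore $\operatorname{depth}_{B_+}(A) = \operatorname{depth}_{A_+}(A) = \dim A = n$, the last equality because $A = S(\mathfrak{h})$ is a polynomial ring, hence Cohen--Macaulay. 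The graded Auslander--Buchsbaum formula then yields $\operatorname{pd}_B(A) = \operatorname{depth}_{B_+}(B) - \operatorname{depth}_{B_+}(A) = n - n = 0$, so $A$ is a finitely generated projective graded module over the connected graded ring $B$, and is therefore free.

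For the converse, suppose $A$ is a free $B$-module. Since $A \neq 0$ it is faithfully flat over $B$, so localising the inclusion $B \subseteq A$ at $B_+$ and at the prime $A_+$ of $A$ lying above it produces a flat local homomorphism $B_{B_+} \to A_{A_+}$ of Noetherian local rings. As $A = S(\mathfrak{h})$ is regular, $A_{A_+}$ is a regular local ring, and flat descent of regularity along a flat local homomorphism (standard, e.g.\ via the Serre criterion or Matsumura's theorem on flat local maps) forces $B_{B_+}$ to be regular local as well. Finally I would record the routine fact that a connected graded Noetherian $K$-algebra whose localisation at the irrelevant ideal is regular is a polynomial ring: homogeneous lifts $f_1,\dots,f_n \in B_+$ of a $K$-basis of $B_+/B_+^2$ generate $B$ by graded Nakayama, and the resulting surjection $K[x_1,\dots,x_n] \twoheadrightarrow B$ has trivial kernel because both rings have dimension $n$ and $K[x_1,\dots,x_n]$ is a domain. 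Hence $S(\mathfrak{h})^W$ is a polynomial ring.

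I expect the main obstacle to be bookkeeping rather than a genuine difficulty. The two places that require care are the identity $\sqrt{B_+A} = A_+$, which is exactly what transfers the Cohen--Macaulayness of $A$ into the correct depth of $A$ viewed as a $B$-module, and the descent of regularity along $B_{B_+} \to A_{A_+}$; both are standard, but they are where the argument genuinely uses that $A$ is a polynomial ring and that the relevant ideal is the irrelevant one. Everything else — integrality, finiteness, faithful flatness of a nonzero free module, the graded Auslander--Buchsbaum formula, graded projective implies free — is formal.
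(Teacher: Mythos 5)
The paper does not give a proof of this statement; it simply cites \cite[Corollary 6.7.13]{Sm}. Your argument is correct and is, as far as one can tell, the standard commutative-algebra proof of this equivalence (and very likely the one given in the cited reference): the forward direction via the graded Auslander--Buchsbaum formula, using that $A = S(\mathfrak{h})$ is a maximal Cohen--Macaulay $B$-module (the key identity being $\sqrt{B_+A} = A_+$, together with the fact that depth with respect to an ideal depends only on the radical and that $\operatorname{depth}_{B_+}(A) = \operatorname{depth}_{B_+A}(A)$ when $A$-depth is computed over the subring $B$); and the converse via faithful flatness of a nonzero free module and descent of regularity along the flat local homomorphism $B_{B_+} \to A_{A_+}$, then the routine observation that a connected graded Noetherian $K$-algebra whose localization at the irrelevant ideal is regular is a polynomial ring. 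The one place where you are slightly terse is the finiteness of $A$ as a $B$-module: integrality of $A$ over $B$ alone is not quite enough, and one should invoke Noether's finiteness theorem to see that $B = S(\mathfrak{h})^W$ is a finitely generated $K$-algebra (hence Noetherian) before concluding module-finiteness. With that spelled out, the proof is complete and sound.
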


\begin{proof}
See \cite[Corollary 6.7.13]{Sm}.
\end{proof}

\subsection{Properties of the nilpotent cone}

We now outline some general preliminaries on the structure theory of groups acting on varieties. At first, we do not impose any restriction on the characteristic. \\

Let $M$ be a variety which admits an algebraic group action by $G$, and let $x \in M$. The closure $\overline{Gx}$ of the orbit $Gx$ of $x$ is a closed subvariety of $M$. By \cite[Proposition 8.3]{Hu2}, $Gx$ is open in $\overline{Gx}$ and so $Gx$ has the structure of an algebraic variety. \\

The orbit map $\pi_x: G \to Gx$, $\pi_x(g) = gx$, is a surjective morphism of varieties. The stabiliser $G_x := \lbrace g \in G \mid gx = x \rbrace$ is a closed subgroup of $G$, and $\pi_x$ induces a bijective morphism:

\begin{gather*}
\overline{\pi_x}: G/G_x \to Gx
\end{gather*}

by \cite[Section 12]{Hu2}. \\

We now specialise to the case where $M = \mathfrak{g}$ and $G$ acts on $\mathfrak{g}$ via the adjoint action. Let $X \in \mathfrak{g}$ and let $GX$ denote the $G$-orbit of $X$ under the adjoint action $\text{Ad}: G \to \text{Ad}(\mathfrak{g})$. \\

Recall that an element $g \in \mathfrak{g}$ is $\emph{nilpotent}$ if the operator $\text{ad}_g(y)$ is nilpotent for each $y \in \mathfrak{g}$. The set of nilpotent elements is denoted $\mathcal{N}$. 

Since $G$ is a linear algebraic group, fix an embedding $G \subseteq GL(V)$ for some $n$-dimensional $K$-vector space $V$. Then $\mathcal{N} = \mathfrak{g} \cap \mathcal{N}(\mathfrak{gl}(V))$, where $\mathcal{N}(\mathfrak{gl}(V))$ denotes the set of nilpotent elements of the Lie algebra of $GL(V)$. It follows that $\mathcal{N}$ is closed in $\mathfrak{g}$, and hence $\mathcal{N}$ has the structure of a subvariety of the algebraic variety $\mathfrak{g}$. \\

Let:

\begin{gather*}
P_X(t) := \text{det}(tI - X)
\end{gather*}

denote the characteristic polynomial of $X$ in the variable $t$. Then:

\begin{gather*}
P_X(t) := t^n + \sum_{i=1}^n (-1)^i s_i(X) t^{n-i}
\end{gather*}

where each $s_i$ is a homogeneous polynomial of degree $i$ in the entries of $X$. If $a_1, \cdots, a_n$ are the eigenvalues of $X$, counted with algebraic multiplicity, then, since $K$ is algebraically closed, $P_X(t) = \prod_{i=1}^n (t-a_i)$, and so $s_i(X)$ can be identified with the $i$th elementary symmetric function in the $a_j$. It follows that $X$ is nilpotent if and only if $P_X(t) = t^n$ if and only if $s_i(X) = 0$ for each $i$:

\begin{gather*}
\mathcal{N}(\mathfrak{gl}(V)) = \lbrace X \in \mathfrak{gl}(V) \mid s_i(X) = 0 \text{ for all } i \rbrace.
\end{gather*}

Let $S(V)$ denote the algebra of polynomial functions on $V$. This has a natural grading by degree, with $S(V) = \bigoplus_{i \geq 0} S^i(V)$. Set $S^+(V) := \bigoplus_{i \geq 1} S^i(V)$. \\

Now the restrictions of the $s_i$ to $\mathfrak{g}$ are $G$-invariant polynomial functions on $\mathfrak{g}$, and so $s_{i|\mathfrak{g}} \in S^i(\mathfrak{g}^*)^G$. It follows that there exist $f_1, \cdots, f_n \in S^+(\mathfrak{g}^*)^G$ such that:

\begin{gather*}
\mathcal{N} = \lbrace X \in \mathfrak{g} \mid f_i(X) = 0 \text{ for all } i \rbrace.
\end{gather*}
 
\begin{prop}\label{affine nilpotent cone}
The nilpotent cone $\mathcal{N}$ may be realised as:

\begin{gather*}
\mathcal{N} = \lbrace X \in \mathfrak{g} \mid f(X) = 0 \text{ for all } f \in S^+(\mathfrak{g}^*)^G \rbrace.
\end{gather*}

Hence $\mathcal{N} = V(S^+(\mathfrak{g}^*)^G)$ is an affine variety.
\end{prop}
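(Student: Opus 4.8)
The plan is to reduce the statement to the set equality $\mathcal{N} = V(S^+(\mathfrak{g}^*)^G)$ inside the affine space $\mathfrak{g}$; granting that, the final clause is automatic, since the common vanishing locus of any collection of polynomial functions is Zariski-closed in $\mathfrak{g}$ and hence is an affine variety. The inclusion $V(S^+(\mathfrak{g}^*)^G) \subseteq \mathcal{N}$ is immediate: the functions $f_1, \dots, f_n$ constructed just before the statement all lie in $S^+(\mathfrak{g}^*)^G$, so a point on which every element of $S^+(\mathfrak{g}^*)^G$ vanishes in particular has $f_i(X) = 0$ for all $i$, and therefore lies in $\mathcal{N}$ by the description recalled above.

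For the reverse inclusion I would show that every $f \in S^+(\mathfrak{g}^*)^G$ vanishes on $\mathcal{N}$, and the geometric heart of the matter is the claim that $0 \in \overline{GX}$ whenever $X$ is nilpotent. Granting this, the argument closes formally: such an $f$ is a regular function on $\mathfrak{g}$ which is constant, with value $f(X)$, on the orbit $GX$ by $G$-invariance; since the locus where a regular function takes a fixed value is closed, $f$ is constant on $\overline{GX}$, so $f(X) = f(0)$; and $f(0) = 0$ because $f \in S^+(\mathfrak{g}^*)$ has zero constant term.

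To prove $0 \in \overline{GX}$ for $X \in \mathcal{N}$, I would first replace $X$ by a suitable conjugate $\mathrm{Ad}(g)X$ lying in the nilradical $\mathfrak{n} = \mathrm{Lie}(R_u B)$ of a fixed Borel $B = TU$; since $G(\mathrm{Ad}(g)X) = GX$, this does not affect whether $0 \in \overline{GX}$, and every nilpotent element of $\mathfrak{g}$ is conjugate into $\mathfrak{n}$ by \cite{JN}. Writing $\mathfrak{n} = \bigoplus_{\alpha \in R^+} \mathfrak{g}_\alpha$ and $X = \sum_{\alpha \in R^+} X_\alpha$ with $X_\alpha \in \mathfrak{g}_\alpha$, I would pick a strictly dominant cocharacter $\lambda \in X_*(T)$, that is, one with $\langle \alpha, \lambda \rangle \geq 1$ for every $\alpha \in R^+$ (for instance $\lambda = \sum_{\alpha \in R^+} \alpha^\vee$, which lies in the coroot lattice inside $X_*(T)$ and pairs to a positive integer with every positive root). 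Since $\mathrm{Ad}(\lambda(t))$ scales $\mathfrak{g}_\alpha$ by $t^{\langle \alpha, \lambda \rangle}$, the map $t \mapsto \mathrm{Ad}(\lambda(t))X = \sum_{\alpha \in R^+} t^{\langle \alpha, \lambda \rangle} X_\alpha$ extends to a morphism $\mathbb{A}^1 \to \mathfrak{g}$ sending $0$ to $0$; as $\mathbb{G}_m$ is dense in $\mathbb{A}^1$ and its image lies in $GX \subseteq \overline{GX}$, the image of $0$ lies in $\overline{GX}$.

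The sole non-formal ingredient, and the place where positive characteristic must be handled with care, is the assertion that every nilpotent element of $\mathfrak{g}$ is $G$-conjugate into $\mathfrak{n}$: the familiar characteristic-zero proof via $\mathfrak{sl}_2$-triples is exactly the kind of argument that degenerates in small characteristic, so I would instead invoke the characteristic-free version in \cite{JN}, which comes from the identification $\mathcal{N} = G \cdot \mathfrak{n}$ together with closedness of the right-hand side (a consequence of the completeness of $G/B$). The cocharacter-contraction step and the constancy of invariants on orbit closures hold in every characteristic, so no additional hypotheses are needed there.
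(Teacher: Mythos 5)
Your proof is correct and follows essentially the same route as the paper: one inclusion follows from the earlier characterization of $\mathcal{N}$ by the $f_i$, and the reverse inclusion follows from $G$-invariants being constant on orbit closures together with $0 \in \overline{GX}$ for nilpotent $X$, which the paper simply cites from Jantzen~\cite[Proposition 2.11(1)]{JN}. You unpack that citation with the characteristic-free cocharacter contraction $t \mapsto \mathrm{Ad}(\lambda(t))X$ after conjugating into $\mathfrak{n}$, which is a sound and self-contained way to justify the same key geometric fact.
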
 

\begin{proof}
It is clear that $\lbrace X \in \mathfrak{g} \mid f(X) = 0 \text{ for all } f \in S^+(\mathfrak{g}^*)^G \rbrace \subseteq \mathcal{N}$ by the above discussion. Conversely, given $f \in S^+(\mathfrak{g}^*)^G$, $f(0) = 0$ and $f$ is constant on the closure of the orbits under the adjoint action. Then $f$ is constant on $\overline{GX}$, the closure of the regular orbit under the adjoint action, and $0 \in \overline{GX}$ by \cite[Proposition 2.11(1)]{JN}.
\end{proof}

\begin{lemma}\label{borel subgp variety}
Let $\mathcal{B}$ be the set of all Borel subalgebras of $\mathfrak{g}$. Then there is a bijection $G/B \leftrightarrow \mathcal{B}$.
\end{lemma}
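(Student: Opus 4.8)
The plan is to realise the bijection as a composite: first identify $G/B$ with the set of Borel subgroups of $G$ via conjugation, then pass to Lie algebras. Fix a Borel subgroup $B \leq G$ with $\mathfrak{b} := \operatorname{Lie}(B)$, and fix a maximal torus $T \leq B$. Let $G$ act on the set of its Borel subgroups by conjugation and on $\mathcal{B}$ by the adjoint action. Since all Borel subgroups of $G$ are conjugate and every Borel subgroup is self-normalising (\cite{Hu2}), the orbit map $g \mapsto gBg^{-1}$ descends to a bijection $G/B \longleftrightarrow \{\,\text{Borel subgroups of } G\,\}$.

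Now consider the $G$-equivariant map $\Phi \colon \{\,\text{Borel subgroups of } G\,\} \to \mathcal{B}$, $B' \mapsto \operatorname{Lie}(B')$. It is surjective because, in the present setting, a Borel subalgebra of $\mathfrak{g}$ is by definition the Lie algebra of a Borel subgroup. The substance of the proof is the injectivity of $\Phi$: suppose $\operatorname{Lie}(B_1) = \operatorname{Lie}(B_2)$, and by conjugacy write $B_2 = g B_1 g^{-1}$, so that $\operatorname{Ad}(g)\operatorname{Lie}(B_1) = \operatorname{Lie}(B_2) = \operatorname{Lie}(B_1)$ and hence $g \in N_G(\operatorname{Lie}(B_1))$. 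Thus it is enough to prove $N_G(\mathfrak{b}) = B$.

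To establish this, I would argue that $N_G(\mathfrak{b})$, being a closed subgroup of $G$ containing the Borel subgroup $B$, is a parabolic subgroup, hence connected and of the form $P_I$ for some set $I$ of simple roots determined by $(B,T)$. Using the root-space decomposition $\mathfrak{g} = \mathfrak{t} \oplus \bigoplus_{\alpha \in R} \mathfrak{g}_\alpha$, valid for reductive $G$ in every characteristic with $\dim \mathfrak{g}_\alpha = 1$, I would show $I = \emptyset$: if $\alpha \in I$ were a simple root, then $P_I$ contains a representative $n_\alpha$ of the reflection $s_\alpha$, and $\operatorname{Ad}(n_\alpha)$ would carry the root space $\mathfrak{g}_\alpha \subseteq \mathfrak{b}$ onto $\mathfrak{g}_{-\alpha}$, which is not contained in $\mathfrak{b}$ since $-\alpha$ is a negative root, contradicting $n_\alpha \in N_G(\mathfrak{b})$. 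Therefore $N_G(\mathfrak{b}) = P_\emptyset = B$, $\Phi$ is injective, and composing with the identification of the first paragraph yields the claimed bijection $G/B \longleftrightarrow \mathcal{B}$ (explicitly, $gB \mapsto \operatorname{Ad}(g)\mathfrak{b}$).

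The only real obstacle is the normaliser computation $N_G(\mathfrak{b}) = B$. In positive characteristic the Lie algebra of a subgroup need not determine the subgroup, so one cannot shortcut via $N_G(\mathfrak{b}) \subseteq N_G(B)$; instead one exploits that $N_G(\mathfrak{b})$ is automatically parabolic and rules out the proper parabolics containing $B$ by the root-space argument above. All the ingredients — conjugacy of Borel subgroups, self-normalisation of a Borel, the classification of parabolics, and the root-space decomposition of a reductive Lie algebra — hold without any hypothesis on $p$, so no restriction on the characteristic is needed here.
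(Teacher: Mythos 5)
Your proof is correct and ultimately rests on the same fact the paper's proof does, namely that the stabiliser $G_{\mathfrak{b}} = N_G(\mathfrak{b})$ equals $B$, combined with conjugacy of Borel subalgebras and the orbit--stabiliser correspondence for $g \mapsto \operatorname{Ad}(g)\mathfrak{b}$. The difference is one of self-containment: the paper simply cites $G_{\mathfrak{b}} = B$ from Borel's book, whereas you prove it from scratch by observing that $N_G(\mathfrak{b})$ is a closed subgroup containing $B$, hence a standard parabolic $P_I$, and then ruling out every simple root $\alpha \in I$ via the root-space argument that a representative of $s_\alpha$ would carry $\mathfrak{g}_\alpha \subseteq \mathfrak{b}$ to $\mathfrak{g}_{-\alpha} \not\subseteq \mathfrak{b}$. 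That normaliser computation is sound (the classification of closed subgroups containing a Borel as the connected standard parabolics $P_I$, and the one-dimensionality of root spaces, both hold over an algebraically closed field of any characteristic). You are also right to flag the characteristic-$p$ subtlety that one cannot assume $N_G(\mathfrak{b}) \subseteq N_G(B)$; the parabolic argument handles this correctly, while the paper sidesteps it by citation. One minor remark: your detour through the set of Borel subgroups (using $N_G(B) = B$) is harmless but not strictly needed, since once $N_G(\mathfrak{b}) = B$ is in hand the orbit map on subalgebras gives the bijection directly, which is exactly how the paper phrases it.
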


\begin{proof}
$\mathcal{B}$ is the closed subvariety of the Grassmannian of $\text{dim } \mathfrak{b}$-dimensional subspaces in $\mathfrak{g}$ formed by solvable Lie algebras. Hence $\mathcal{B}$ is a projective variety. All Borel subalgebras are conjugate under the adjoint action of $G$, and the stabiliser subgroup $G_{\mathfrak{b}}$ of $\mathfrak{b}$ in $G$ is equal to $B$ by \cite[Theorem 11.16]{Bo}. Hence the claimed bijection follows via the assignment $g \mapsto g \cdot \mathfrak{b} \cdot g^{-1}$.
\end{proof}

\begin{defn}\label{enhanced nilpotent cone}
Set $\widetilde{\mathfrak{g}} := \lbrace (x, \mathfrak{b}) \in \mathfrak{g} \times \mathcal{B} \mid x \in \mathfrak{b} \rbrace$, and let $\mu: \widetilde{\mathfrak{g}} \to \mathfrak{g}$ be the projection onto the first coordinate. The $\emph{enhanced nilpotent cone}$ is the preimage of $\mathcal{N}$ under the map $\mu$:

\begin{gather*}
\widetilde{\mathcal{N}} := \mu^{-1}(\mathcal{N}) = \lbrace (x, \mathfrak{b}) \in \mathcal{N} \times \mathcal{B} \mid x \in \mathfrak{b} \rbrace.
\end{gather*}
\end{defn}

\begin{lemma}\label{enhanced nilpotent cone smooth}
$\widetilde{\mathcal{N}}$ is a smooth irreducible variety.
\end{lemma}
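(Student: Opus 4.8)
The plan is to realise $\widetilde{\mathcal{N}}$ as the total space of a vector bundle over $\mathcal{B}$; smoothness and irreducibility then follow at once from the corresponding properties of $\mathcal{B}$. By Lemma \ref{borel subgp variety}, $\mathcal{B} \cong G/B$ is a smooth irreducible projective variety, and the quotient map $\pi : G \to \mathcal{B}$, $g \mapsto \mathrm{Ad}(g)\mathfrak{b}$, is Zariski-locally trivial (it admits a section over the open Bruhat cell, whose $G$-translates cover $\mathcal{B}$), so $\pi$ has a local section $s_i : U_i \to G$ over each member of a finite open cover $\{U_i\}$ of $\mathcal{B}$. Let $p : \widetilde{\mathcal{N}} \to \mathcal{B}$ be projection onto the second coordinate; since $\widetilde{\mathcal{N}}$ is closed in $\mathfrak{g} \times \mathcal{B}$ this is a morphism, and its fibre over $\mathfrak{b}' \in \mathcal{B}$ is $\mathcal{N} \cap \mathfrak{b}'$.

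The key step is the identity $\mathcal{N} \cap \mathfrak{b} = \mathfrak{n}$, where $\mathfrak{n} := [\mathfrak{b},\mathfrak{b}]$ is the nilradical of $\mathfrak{b}$, i.e.\ the Lie algebra of the unipotent radical of $B$. Choosing the embedding $G \subseteq GL(V)$ so that $B$ is upper triangular and the maximal torus $T \subseteq B$ is diagonal, $\mathfrak{n}$ consists of strictly upper-triangular (hence nilpotent) matrices, so $\mathfrak{n} \subseteq \mathcal{N}$. Conversely, writing $\mathfrak{b} = \mathfrak{h} \oplus \mathfrak{n}$ as vector spaces, the composite $\mathfrak{b} \to \mathfrak{b}/\mathfrak{n} \cong \mathfrak{h} \hookrightarrow (\text{diagonal matrices})$ — the last map being injective because the differential of the closed immersion $T \hookrightarrow GL(V)$ is injective — is just "take the diagonal part"; since a nilpotent matrix has zero diagonal, any nilpotent $x \in \mathfrak{b}$ lies in $\ker(\mathfrak{b} \to \mathfrak{h}) = \mathfrak{n}$. (Compare \cite[\S 6]{JN}.) Conjugating by $G$, every fibre of $p$ has the form $\mathcal{N} \cap \mathrm{Ad}(g)\mathfrak{b} = \mathrm{Ad}(g)\mathfrak{n}$, an $N$-dimensional linear subspace, where $N = |R^+|$.

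For each $i$, define $\phi_i : U_i \times \mathfrak{n} \to p^{-1}(U_i)$ by $\phi_i(\mathfrak{b}', x) = (\mathrm{Ad}(s_i(\mathfrak{b}'))x,\, \mathfrak{b}')$; by the previous paragraph $\mathrm{Ad}(s_i(\mathfrak{b}'))x \in \mathcal{N} \cap \mathfrak{b}'$, so $\phi_i$ indeed lands in $p^{-1}(U_i)$, and it is an isomorphism of varieties with inverse $(x,\mathfrak{b}') \mapsto (\mathfrak{b}',\, \mathrm{Ad}(s_i(\mathfrak{b}'))^{-1}x)$ — both maps being morphisms assembled from $s_i$, the adjoint representation, inversion in $G$, and linear algebra. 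Hence $p : \widetilde{\mathcal{N}} \to \mathcal{B}$ is a rank-$N$ vector bundle over the smooth irreducible variety $\mathcal{B}$; being covered by the irreducible opens $p^{-1}(U_i) \cong U_i \times \mathbb{A}^N$, which pairwise meet because $\mathcal{B}$ is irreducible, $\widetilde{\mathcal{N}}$ is itself smooth and irreducible, of dimension $2N$.

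The main obstacle is establishing $\mathcal{N} \cap \mathfrak{b} = \mathfrak{n}$ uniformly in $p$: the diagonal-part argument hinges on $\mathrm{Lie}(T)$ embedding into the diagonal matrices, a point that needs care in positive characteristic (it holds because closed immersions are unramified, so have injective differential); once the fibres of $p$ are pinned down, everything else is formal.
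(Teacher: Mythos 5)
Your proof is correct and rests on the same core idea as the paper's: identify $p:\widetilde{\mathcal{N}}\to\mathcal{B}$ as a vector bundle with fibre $\mathfrak{n}$ and read off smoothness and irreducibility from there. Where you differ is in the implementation. For the fibre identification $\mathcal{N}\cap\mathfrak{b}=\mathfrak{n}$ you give a self-contained diagonal-part argument after arranging the embedding $G\subseteq GL(V)$ so that $B$ is upper triangular and $T$ diagonal, with the careful observation that $\mathrm{Lie}(T)\hookrightarrow\mathfrak{gl}(V)$ is injective because closed immersions have injective differentials; the paper simply asserts that a nilpotent element of $\mathfrak{b}$ has no $\mathfrak{h}$-component. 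For local triviality you build the trivialising isomorphisms $\phi_i$ explicitly from Bruhat-cell sections, whereas the paper cites local triviality of $G\to G/B$ and invokes the associated-bundle description $\widetilde{\mathcal{N}}\cong G\times_B\mathfrak{n}$. And for irreducibility the paper exhibits $\widetilde{\mathcal{N}}$ as the image of the irreducible $\mathfrak{n}\times G$ under a surjective morphism, while you cover it by the irreducible opens $p^{-1}(U_i)\cong U_i\times\mathbb{A}^N$, which pairwise meet because $\mathcal{B}$ is irreducible. Both routes are valid; yours is slightly more elementary and explicit, while the paper's associated-bundle phrasing also records the $G$-equivariance of the structure, which it uses in Lemma \ref{enhanced nilpotent cone cotangent}. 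One small point of exposition: you assert $\widetilde{\mathcal{N}}$ is closed in $\mathfrak{g}\times\mathcal{B}$ without comment; this is immediate from Definition \ref{enhanced nilpotent cone} (preimage of the closed set $\mathcal{N}$ under the morphism $\mu$ restricted to the closed incidence variety $\widetilde{\mathfrak{g}}$), but it is worth a line, as the paper verifies it explicitly.
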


\begin{proof}
Let $\mathfrak{b} \in \mathcal{B}$ be a fixed Borel subalgebra. The fibre over $\mathfrak{b}$ of the second projection $\pi: \widetilde{\mathcal{N}} \to \mathcal{B}$ is the set of nilpotent elements of $\mathfrak{b}$. Decomposing $\mathfrak{b} = \mathfrak{h} \oplus \mathfrak{n}$, where $\mathfrak{n}:= [\mathfrak{b}, \mathfrak{b}]$ is the nilradical of $\mathfrak{b}$, an element $x \in \mathfrak{b}$ is nilpotent if and only if it is has no component in the Cartan subalgebra $\mathfrak{h}$. Hence $\pi$ makes $\widetilde{\mathcal{N}}$ a vector bundle over $\mathcal{B}$ with fibre $\mathfrak{n}$. \\

The canonical map $G \to G/B$ is locally trivial by \cite[II.1.10(2)]{Ja2}, so the set of $B$-orbits on $G \times \mathfrak{n}$ has a natural structure of a variety, denoted $G \times_B \mathfrak{n}$. The above construction yields a $G$-equivariant vector bundle isomorphism:

\begin{gather*}
\widetilde{\mathcal{N}} \cong G \times_B \mathfrak{n},
\end{gather*}

where $B$ is the Borel subgroup of $G$ corresponding to $\mathfrak{b}$. It follows that we may view $\widetilde{\mathcal{N}}$ as a vector bundle over the smooth variety $G/B$, and so $\widetilde{\mathcal{N}}$ is smooth. \\

Using Lemma \ref{borel subgp variety}, identify $\mathcal{B}$ with $G/B$ and consider the morphism $f: \mathfrak{g} \times G \to \mathfrak{g} \times \mathcal{B}$ defined by $f(x,g) = (x,gB)$. The inverse image:

\begin{gather*}
f^{-1}(\widetilde{\mathcal{N}}) = \lbrace (x,g) \in \mathcal{N} \times G \mid \text{Ad}(g^{-1})(x) \in \mathfrak{n} \rbrace
\end{gather*}

is closed in $\mathfrak{g} \times G$ since it is the inverse image of $\mathfrak{n}$ under the natural map $\mathfrak{g} \times G \to \mathfrak{g}$, $(x,g) \mapsto \text{Ad}(g^{-1})(x)$. Since $f$ is an open map and $f^{-1}(\widetilde{\mathcal{N}})$ is closed, $\widetilde{\mathcal{N}}$ is a closed subvariety of $\mathcal{N} \times \mathcal{B}$. \\

The morphism $\mathfrak{n} \times G \to \widetilde{\mathcal{N}}, (x,g) \mapsto (\text{ad}(g)(x), gB)$ is surjective by definition. Hence $\widetilde{\mathcal{N}}$ is irreducible.
\end{proof}

By \cite[Theorem 2.8(1)]{JN}, there are only finitely many orbits for the $G$-action in the nilpotent cone $\mathcal{N}$. Let $X_1, \cdots, X_r$ be representatives for these orbits. Then:

\begin{gather*}
\mathcal{N} = \bigcup_{i=1}^r \overline{\mathcal{O}_{X_i}}
\end{gather*}

Since $\mathcal{N}$ is irreducible by Lemma \ref{enhanced nilpotent cone smooth}, one of these closed subsets must be all of $\mathcal{N}$: let $\overline{GZ} = \mathcal{N}$. Then, by \cite[1.13, Corollary 1]{St3}, this orbit is open in $\mathcal{N}$ and $\text{dim}(GZ) = \text{dim}(\mathcal{N})$, while $\text{dim}(GY) < \text{dim}(GZ)$ for any $GY \neq GZ$. Hence $GZ$ is unique with respect to this property. \\

\begin{defn}\label{regular element}
An element $X \in \mathfrak{g}$ is $\emph{regular}$ if it lies in $GZ$, the unique open dense $G$-orbit of $\mathcal{N}$.
\end{defn} 

We now specialise to the case where $G = PGL_n$ and $p|n$. \\

\begin{lemma}\label{enhanced nilpotent cone cotangent}
There is a natural $G$-equivariant vector bundle isomorphism:

\begin{gather*}
\widetilde{\mathcal{N}} \cong T^*\mathcal{B}.
\end{gather*}
\end{lemma}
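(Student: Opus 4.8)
The plan is to realise both $\widetilde{\mathcal{N}}$ and $T^*\mathcal{B}$ as vector bundles associated to the locally trivial principal $B$-bundle $G \to G/B \cong \mathcal{B}$, and then to match their fibres as $B$-modules. The proof of Lemma \ref{enhanced nilpotent cone smooth} already gives a $G$-equivariant vector bundle isomorphism $\widetilde{\mathcal{N}} \cong G \times_B \mathfrak{n}$, where $\mathfrak{n} = [\mathfrak{b},\mathfrak{b}]$ is the nilradical of the fixed Borel $\mathfrak{b}$. On the other side, since $G \to G/B$ is locally trivial (cited above as \cite[II.1.10(2)]{Ja2}), the tangent bundle of $G/B$ is $T(G/B) \cong G \times_B (\mathfrak{g}/\mathfrak{b})$, the fibre being the tangent space at $eB$ with its natural $B$-action induced from the adjoint action on $\mathfrak{g}$; dualising gives $T^*\mathcal{B} \cong G \times_B (\mathfrak{g}/\mathfrak{b})^*$. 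So the whole problem reduces to exhibiting a $B$-equivariant isomorphism $\mathfrak{n} \xrightarrow{\sim} (\mathfrak{g}/\mathfrak{b})^*$ and then applying the functor $G \times_B (-)$.

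To construct that isomorphism I would invoke the $G$-equivariant isomorphism $\kappa : \mathfrak{g} \to \mathfrak{g}^*$ of \cite[Section 6.5]{JN} recorded earlier, equivalently the non-degenerate $G$-invariant bilinear form $\langle x, y \rangle := \kappa(x)(y)$ on $\mathfrak{g}$. Here $\mathfrak{g} = \mathfrak{pgl}_n$ still has its usual root space decomposition $\mathfrak{g} = \mathfrak{h} \oplus \bigoplus_{\alpha \in R} \mathfrak{g}_\alpha$, since each $\mathfrak{g}_\alpha$ with $\alpha \neq 0$ meets the image of the scalar matrices trivially. Invariance of $\langle -, - \rangle$ under the maximal torus $T$ forces $\langle \mathfrak{g}_\alpha, \mathfrak{g}_\beta \rangle = 0$ whenever $\alpha + \beta \neq 0$ in $X^*(T)$ — and since $X^*(T)$ is torsion-free this just says $\beta \neq -\alpha$ — as well as $\langle \mathfrak{g}_\alpha, \mathfrak{h} \rangle = 0$ for $\alpha \neq 0$. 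Because $\mathfrak{b} = \mathfrak{h} \oplus \bigoplus_{\alpha > 0} \mathfrak{g}_\alpha$ and $\mathfrak{n} = \bigoplus_{\alpha > 0} \mathfrak{g}_\alpha$, this yields $\langle \mathfrak{n}, \mathfrak{b} \rangle = 0$, i.e. $\mathfrak{n} \subseteq \mathfrak{b}^{\perp}$; and since $\dim \mathfrak{b}^{\perp} = \dim \mathfrak{g} - \dim \mathfrak{b} = \dim \mathfrak{n}$, we get $\mathfrak{b}^{\perp} = \mathfrak{n}$.

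It follows that $\kappa$ restricts to an isomorphism from $\mathfrak{n} = \mathfrak{b}^{\perp}$ onto the annihilator $\mathrm{Ann}(\mathfrak{b}) \subseteq \mathfrak{g}^*$, which is canonically $(\mathfrak{g}/\mathfrak{b})^*$; this restriction is $B$-equivariant because $\kappa$ is $G$-equivariant and $\mathfrak{n}, \mathfrak{b}$ are $B$-stable. Applying $G \times_B (-)$ then produces the desired chain of $G$-equivariant vector bundle isomorphisms
\begin{gather*}
\widetilde{\mathcal{N}} \;\cong\; G \times_B \mathfrak{n} \;\cong\; G \times_B (\mathfrak{g}/\mathfrak{b})^* \;\cong\; T^*\mathcal{B}.
\end{gather*}

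I expect the torus-weight computation and the dimension count to be routine. The steps needing real care are the identification $T^*\mathcal{B} \cong G \times_B (\mathfrak{g}/\mathfrak{b})^*$ over a field of positive characteristic — which I would assemble from the local triviality of $G \to G/B$ and the standard description of the cotangent sheaf of a homogeneous space — and making sure that $\kappa$, being an isomorphism of $G$-modules rather than merely a $G$-equivariant morphism, genuinely defines a non-degenerate invariant form, so that the equality $\dim \mathfrak{b}^{\perp} = \dim \mathfrak{g} - \dim \mathfrak{b}$ is valid.
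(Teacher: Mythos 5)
Your proof is correct. The paper's own proof of Lemma \ref{enhanced nilpotent cone cotangent} is a single citation to \cite[Section 6.5]{JN}, so there is no explicit argument in the paper to compare against; your argument is best read as a faithful reconstruction of the content of that reference. The structure is the standard one and is right: reduce to a $B$-module isomorphism $\mathfrak{n} \cong (\mathfrak{g}/\mathfrak{b})^*$, obtain it by restricting the $G$-equivariant isomorphism $\kappa$ to $\mathfrak{b}^{\perp} = \mathfrak{n}$, and apply the associated-bundle functor $G \times_B (-)$. The torus-weight computation for $\mathfrak{n} \subseteq \mathfrak{b}^{\perp}$ and the dimension count forcing equality are routine and correct, as is your remark that the root space decomposition of $\mathfrak{pgl}_n$ is unaffected by the quotient by scalars. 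You have also flagged precisely the two points where the real content lies: the local triviality of $G \to G/B$ in positive characteristic (which the paper already secures via \cite[II.1.10(2)]{Ja2} in the proof of Lemma \ref{enhanced nilpotent cone smooth}), and the existence of the $G$-equivariant isomorphism $\kappa: \mathfrak{g} \to \mathfrak{g}^*$ for $G = PGL_n$ with $p \mid n$. The latter is the real substance of the citation to \cite[Section 6.5]{JN} and is asserted earlier in the paper without independent argument, so your proof rests on exactly the same input as the paper's.
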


\begin{proof}
This follows from \cite[Section 6.5]{JN}.
\end{proof}

\begin{defn}
The map $\mu: T^*\mathcal{B} \to \mathcal{N}$ is the $\emph{Springer resolution}$ for the nilpotent cone $\mathcal{N}$.
\end{defn} \vspace{4.4 mm}

\begin{lemma}\label{density}
Let $\mathcal{N}_s$ denote the set of smooth points of $\mathcal{N}$. Then $\mu^{-1}(\mathcal{N}_s)$ is dense in $\widetilde{\mathcal{N}}$. 
\end{lemma}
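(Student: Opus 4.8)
The plan is to leverage the irreducibility of $\widetilde{\mathcal{N}}$ recorded in Lemma \ref{enhanced nilpotent cone smooth}. In an irreducible variety every nonempty open subset is dense, so it is enough to check that $\mu^{-1}(\mathcal{N}_s)$ is nonempty and open. It is open because $\mathcal{N}$ is a variety over the algebraically closed field $K$, so its smooth locus $\mathcal{N}_s$ is open in $\mathcal{N}$, and $\mu$ is a morphism, hence continuous. Thus the only real task is to exhibit a single point of $\widetilde{\mathcal{N}}$ lying over a smooth point of $\mathcal{N}$.

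For this I would use the regular orbit. Recall from the discussion preceding Definition \ref{regular element} that $\mathcal{N}$ is irreducible with $\mathcal{N} = \overline{GZ}$, where $GZ$ is the unique open dense $G$-orbit. Since $K$ is algebraically closed, $\mathcal{N}_s$ is moreover dense in $\mathcal{N}$, and it is visibly stable under the action of $G$ by automorphisms of $\mathcal{N}$; hence $\mathcal{N}_s \cap GZ$ is a nonempty, $G$-stable subset of the homogeneous space $GZ$, and therefore equals $GZ$. In particular $GZ \subseteq \mathcal{N}_s$, so $\mu^{-1}(GZ) \subseteq \mu^{-1}(\mathcal{N}_s)$, and it suffices to show $\mu^{-1}(GZ) \neq \emptyset$, i.e. that some regular nilpotent element lies in a Borel subalgebra. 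When $G = PGL_n$ one can take $Z$ to be the image of a regular nilpotent matrix in Jordan form, which lies in the subalgebra of upper triangular matrices; alternatively, this is exactly the surjectivity of $\mu$, which holds because $\mathrm{Im}\,\mu$ is closed in $\mathcal{N}$ ($\mathcal{B}$ is projective, so $\mu$ is proper), is $G$-stable, and contains a point of $GZ$, whose closure is $\mathcal{N}$.

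Assembling these observations, $\mu^{-1}(\mathcal{N}_s)$ contains the nonempty open subset $\mu^{-1}(GZ)$ of $\widetilde{\mathcal{N}}$; since $\widetilde{\mathcal{N}}$ is irreducible this subset is dense, and hence so is $\mu^{-1}(\mathcal{N}_s)$.

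I do not expect a genuine obstacle here: once Lemmas \ref{enhanced nilpotent cone smooth} and \ref{enhanced nilpotent cone cotangent} are available the statement is essentially formal. The only points needing a word of care are that $\mathcal{N}$ is reduced — which is built into its definition as a subvariety of $\mathfrak{g}$, guaranteeing $\mathcal{N}_s \neq \emptyset$ — and the nonemptiness of $\mu^{-1}(GZ)$, which in the $PGL_n$ case can be made completely explicit and so does not require the full surjectivity of the Springer map.
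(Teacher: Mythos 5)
Your proof is correct and takes essentially the same route as the paper: both arguments reduce the statement to noting that $\mu^{-1}(\mathcal{N}_s)$ is a nonempty open subset of the irreducible variety $\widetilde{\mathcal{N}}$. The only difference is that the paper simply asserts nonemptiness of the preimage, while you carefully justify it by checking $GZ \subseteq \mathcal{N}_s$ and $\mu^{-1}(GZ) \neq \emptyset$ — a reasonable expansion, since surjectivity of $\mu$ (or at least that it hits a smooth point) has not yet been recorded at this point in the paper, and the identification $\mathcal{N}_s = GZ$ only appears later in Lemma~\ref{smooth elements}.
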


\begin{proof}
$\mathcal{N}_s$ is an open and non-empty subset of $\mathcal{N}$. Hence it is dense, and its preimage is open and non-empty in $\widetilde{\mathcal{N}}$. By Lemma \ref{enhanced nilpotent cone smooth}, $\widetilde{\mathcal{N}}$ is irreducible and so $\mu^{-1}(\mathcal{N}_s)$ is dense.
\end{proof}

\begin{lemma}\label{birationality}
Let $GZ$ denote the orbit of all regular nilpotent elements. The morphism $\mu^{-1}(GZ) \to GZ$ is an isomorphism of varieties.
\end{lemma}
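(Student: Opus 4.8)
The goal is to show $\mu^{-1}(GZ)\to GZ$ is an isomorphism, where $GZ$ is the regular nilpotent orbit. Since both $\mu^{-1}(GZ)$ and $GZ$ are smooth varieties (the former is open in the smooth variety $\widetilde{\mathcal N}$ by Lemma \ref{enhanced nilpotent cone smooth}, the latter is an orbit, hence smooth and homogeneous), and $\mu$ is a $G$-equivariant morphism, it suffices by $G$-equivariance to work fibrewise: I would show that for a fixed regular nilpotent $Z$, the fibre $\mu^{-1}(Z)\subseteq\widetilde{\mathcal N}$ consists of exactly one point, i.e. $Z$ lies in exactly one Borel subalgebra $\mathfrak b$. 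A bijective morphism between smooth varieties in characteristic $p$ need not be an isomorphism (it could be purely inseparable, as in Frobenius), so after establishing the set-theoretic bijection I must also check separability; I would do this by verifying that $d\mu$ is surjective (equivalently, injective for dimension reasons) at a point of $\mu^{-1}(GZ)$, or alternatively invoke Zariski's main theorem together with normality of the smooth variety $GZ$.

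First I would recall that $\mu\colon\widetilde{\mathcal N}\to\mathcal N$ is proper (it is the restriction of the projection $\mathfrak g\times\mathcal B\to\mathfrak g$ with $\mathcal B$ projective), hence $\mu^{-1}(GZ)\to GZ$ is proper; a proper morphism with finite fibres is finite. Next I would compute $\dim\mu^{-1}(Z)$. The fibre $\mu^{-1}(Z)$ is the variety of Borel subalgebras containing $Z$ — the Springer fibre $\mathcal B_Z$. Using the vector-bundle description $\widetilde{\mathcal N}\cong G\times_B\mathfrak n$ from Lemma \ref{enhanced nilpotent cone smooth} one gets $\dim\widetilde{\mathcal N}=\dim\mathcal B+\dim\mathfrak n=\dim\mathcal N$, and since $GZ$ is open dense in $\mathcal N$, the generic fibre of $\mu$ over $GZ$ has dimension $0$; because $GZ$ is a single orbit and $\mu$ is $G$-equivariant, \emph{every} fibre over $GZ$ has dimension $0$, so $\mathcal B_Z$ is finite. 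The key input I would then cite is that in the cases under consideration ($G=PGL_n$, $p\mid n$, using the isomorphism $\kappa\colon\mathfrak g\to\mathfrak g^*$ and the good-prime reduction of \cite[Section 6.5]{JN}) a regular nilpotent element lies in a \emph{unique} Borel subalgebra: one reduces via $\kappa$ and the isomorphism $\mathfrak{pgl}_n\cong\mathfrak{sl}_n/\text{(centre)}$-type arguments, or more directly one exhibits a regular nilpotent as a principal nilpotent whose centraliser has the expected dimension $\mathrm{rk}\,\mathfrak g$ and which is contained in $\mathfrak n$ for exactly one $\mathfrak b$, e.g. by the standard argument that if $Z$ is regular nilpotent then $\mathfrak b=\mathfrak z_{\mathfrak g}(Z)\oplus[\mathfrak g,Z]\cap\ldots$ is forced — here I would lean on the characterisation of regular elements via $\dim G_Z$ and the fact that $G_Z\subseteq B$ for any Borel $B\supseteq Z$, combined with transitivity of $G$ on pairs $(Z',\mathfrak b')$ with $Z'$ regular in $\mathfrak b'$. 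This gives the set-theoretic bijection.

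Finally I would upgrade the bijection to an isomorphism. The cleanest route: $\mu^{-1}(GZ)\to GZ$ is a finite birational morphism (birational because it is a bijection between irreducible varieties and, as just argued, generically a bijection on a dense open) onto the \emph{normal} variety $GZ$ (smooth, being an orbit), so by the normality/Zariski's-main-theorem argument it is an isomorphism. Alternatively, and to keep the characteristic-$p$ subtlety explicit, I would check that $\mu$ is separable on $\mu^{-1}(GZ)$ by computing $d\mu$ at $(Z,\mathfrak b)$: the tangent space to $\widetilde{\mathcal N}$ at $(Z,\mathfrak b)$ fits in an exact sequence involving $\mathfrak n$ and $T_{\mathfrak b}\mathcal B\cong\mathfrak g/\mathfrak b$, and $d\mu$ sends it to $T_Z\mathcal N$; surjectivity of $d\mu$ reduces to the statement that $[\mathfrak g,Z]+\mathfrak n=\mathfrak g$ for $Z$ regular nilpotent in $\mathfrak n$, which holds because $\dim[\mathfrak g,Z]=\dim\mathfrak g-\mathrm{rk}\,\mathfrak g$ and one checks the intersection with $\mathfrak h$ is captured. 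The main obstacle I anticipate is precisely this separability/smoothness check in the bad characteristic $p\mid n$: one must make sure that passing to $PGL_n$ (rather than $GL_n$ or $SL_n$) and using $\kappa$ does not introduce inseparability into $\mu$, and this is where I would be most careful to invoke the good-prime reduction of \cite[Section 6.5]{JN} rather than a naive $\mathfrak{gl}_n$ computation.
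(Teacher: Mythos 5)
Your core approach mirrors the paper's: establish that $\mu$ restricted to $\mu^{-1}(GZ)$ has singleton fibres, hence is a set-theoretic bijection, and then upgrade to an isomorphism. The paper cites \cite[Corollary 6.8]{JN} for the singleton-fibre statement and concludes rather tersely; you instead derive finiteness of fibres from the equidimensionality $\dim\widetilde{\mathcal{N}} = \dim\mathcal{N}$ plus $G$-equivariance, and you correctly flag that in positive characteristic a bijective morphism of varieties need not be an isomorphism. That caution is well placed and is, if anything, handled more explicitly in your proposal than in the paper's own proof.

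However, your ``cleanest route'' contains a genuine gap: you claim the morphism is birational ``because it is a bijection between irreducible varieties and ... generically a bijection on a dense open.'' That implication is false in characteristic $p$. The Frobenius endomorphism of $\mathbb{A}^1$ is a finite, bijective morphism of smooth irreducible curves, yet it is purely inseparable of degree $p$ and hence not birational; so bijectivity on points does not yield birationality, and the obstruction is exactly the inseparability you yourself raise at the outset. Your second route is the one that actually closes the argument and should be the main one: show $d\mu$ is surjective at a point $(Z,\mathfrak{b})\in\mu^{-1}(GZ)$, which reduces to $[\mathfrak{g},Z]+\mathfrak{n}=\mathfrak{g}$ for $Z$ regular nilpotent in $\mathfrak{n}$. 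Since source and target are smooth of the same dimension, surjectivity of $d\mu$ makes $\mu$ \'etale over $GZ$, and a bijective \'etale morphism onto a connected variety is an isomorphism (a degree-one cover). Equivalently, separability gives birationality, after which finite birational onto the smooth (hence normal) orbit $GZ$ is an isomorphism by Zariski's Main Theorem. With that reorganisation, your proof is both correct and more explicit about the characteristic-$p$ subtlety than the paper's.
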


\begin{proof}
By \cite[Corollary 6.8]{JN}, $GZ$ is an open subset of $\mathcal{N}$, and $|\mu^{-1}(X)| = 1$ for $X \in GZ$. Hence $\mu$ induces a bijection $\mu^{-1}(GZ) \to GZ$. Since the morphism $\mu: T^*\mathcal{B} \to \mathcal{N}$ is given by projection onto the first coordinate, from Definition \ref{enhanced nilpotent cone}, it is a morphism of varieties and hence so is the restriction $\mu \mid_{\mu^{-1}(GZ)}: \mu^{-1}(GZ) \to GZ$. The result follows.
\end{proof}

Recall from Theorem \ref{polynomial theorem} that $S(\mathfrak{h})^W$ is a polynomial ring, with algebraically independent generators $f_1, \cdots, f_n$. \\

\begin{thm}\label{KW}
Let $G$ be a simple algebraic group, and suppose $(G,p) \neq (B,2)$. There is a projection map $\mathfrak{g} = \mathfrak{n}^- \oplus \mathfrak{h} \oplus \mathfrak{n} \to \mathfrak{h}$, which induces a map $S(\mathfrak{g}) \to S(\mathfrak{h})$. \\

This map induces a map $\eta: S(\mathfrak{g})^G \to S(\mathfrak{h})^W$, which is an isomorphism.
\end{thm}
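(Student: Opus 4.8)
The plan is to realise $\eta$ as an honest restriction map and then establish injectivity and surjectivity separately. Write $T$ for the maximal torus with $\mathrm{Lie}(T) = \mathfrak{h}$, so that $\mathfrak{g}^T = \mathfrak{h}$ and dually $(\mathfrak{g}^*)^T = (\mathfrak{n}^- \oplus \mathfrak{n})^{\perp}$. The linear projection $\mathfrak{g} = \mathfrak{n}^- \oplus \mathfrak{h} \oplus \mathfrak{n} \to \mathfrak{h}$ is transpose to the inclusion $(\mathfrak{n}^- \oplus \mathfrak{n})^{\perp} \hookrightarrow \mathfrak{g}^*$, so the induced map $S(\mathfrak{g}) = K[\mathfrak{g}^*] \to K[(\mathfrak{g}^*)^T] = S(\mathfrak{h})$ is restriction of polynomial functions. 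Since $\kappa$ is $T$-equivariant it carries $\mathfrak{g}^T = \mathfrak{h}$ onto $(\mathfrak{g}^*)^T$, and so, after the identification $S(\mathfrak{g}) \cong K[\mathfrak{g}]$ furnished by $\kappa$, the map $\eta$ is precisely restriction of $G$-invariant polynomial functions on $\mathfrak{g}$ to $\mathfrak{h}$. It lands in $S(\mathfrak{h})^W$ because $N_G(\mathfrak{h})$ preserves $\mathfrak{h}$ and acts on it through $W = N_G(\mathfrak{h})/Z_G(\mathfrak{h})$, and the restriction of a $G$-invariant function is tautologically $N_G(\mathfrak{h})$-invariant.

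Injectivity follows once we know $\bigcup_{g \in G} \mathrm{Ad}(g)\mathfrak{h}$ is dense in $\mathfrak{g}$: then a $G$-invariant function vanishing on $\mathfrak{h}$ vanishes on a dense set, hence identically. For this it suffices that the regular semisimple elements be dense in $\mathfrak{g}$ and that each be $\mathrm{Ad}(G)$-conjugate into $\mathfrak{h}$; in type $A$ this holds with no extra hypothesis, as $p$ is always good, and it is exactly here that the condition $(G,p)\neq (B,2)$ enters for the general statement, ensuring that $\mathfrak{g}$ carries enough semisimple elements.

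Surjectivity is the heart of the matter, and I would establish it for $G = PGL_n$, $p \mid n$ via the covering $\mathfrak{gl}_n \twoheadrightarrow \mathfrak{pgl}_n = \mathfrak{gl}_n/KI$. Conjugation by $GL_n$ fixes $KI$ pointwise, hence commutes with the translation action of $KI \cong \mathbb{G}_a$ on $\mathfrak{gl}_n$; therefore $S(\mathfrak{g})^G = K[\mathfrak{pgl}_n]^{PGL_n} = \big(K[\mathfrak{gl}_n]^{GL_n}\big)^{KI}$, and, writing $\mathfrak{d} \subset \mathfrak{gl}_n$ for the diagonal subalgebra, which contains $KI$ and maps onto $\mathfrak{h}$, similarly $S(\mathfrak{h})^W = \big(K[\mathfrak{d}]^{S_n}\big)^{KI}$. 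The characteristic-free Chevalley restriction isomorphism for $\mathfrak{gl}_n$, namely $K[\mathfrak{gl}_n]^{GL_n} = K[s_1,\dots,s_n] \xrightarrow{\sim} K[\mathfrak{d}]^{S_n}$ sending the characteristic-polynomial coefficients $s_i$ to the elementary symmetric functions, is $KI$-equivariant because translation by $tI$ on $\mathfrak{gl}_n$ restricts to translation by $tI$ on $\mathfrak{d}$; passing to $KI$-invariants on both sides shows $\eta$ is bijective. Equivalently, by the explicit generators exhibited in the proof of Theorem \ref{polynomial theorem} it is enough to lift the images of $s_2,\dots,s_n$ to $G$-invariants on $\mathfrak{g}$, and $p \mid n$ is precisely what lets one adjust the $s_i$ on $\mathfrak{gl}_n$ to become $KI$-translation invariant, as it already forces $s_1$ to descend.

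The main obstacle is this surjectivity in the modular case. When $p \mid n$ the natural map $\mathfrak{sl}_n \to \mathfrak{pgl}_n$ fails to be surjective, since $I \in \mathfrak{sl}_n$ and so its image has dimension $n^2-2 < n^2-1$; this is why the usual argument through $\mathfrak{sl}_n$ collapses and one is forced onto $\mathfrak{gl}_n$, with the attendant bookkeeping for the $\mathbb{G}_a$-action — in particular one must check that its invariant ring inside $K[\mathfrak{gl}_n]^{GL_n}$ still has transcendence degree $n-1$, a polynomial ring of the expected size, which is where $p \mid n$ and Theorem \ref{polynomial theorem} do their work. For the general statement with $(G,p)\neq(B,2)$, surjectivity follows instead from the Chevalley restriction theorem in positive characteristic of Kac--Weisfeiler type, the relevant point being that $S(\mathfrak{h})^W$ is then a polynomial ring over which $S(\mathfrak{h})$ is free; in the case at hand this is supplied by Theorems \ref{polynomial theorem} and \ref{freeness polynomial}.
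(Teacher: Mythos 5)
The paper disposes of this result in one line, citing \cite[Theorem 4]{KW}, whereas you give an actual argument, so your route is genuinely different from the paper's. For $G = PGL_n$ your $\mathfrak{gl}_n$-covering argument is correct and rather attractive: since $GL_n$-conjugation fixes scalar matrices it commutes with the $\mathbb{G}_a$-translation by $KI$, so $K[\mathfrak{pgl}_n]^{PGL_n} = (K[\mathfrak{gl}_n]^{GL_n})^{KI}$ and likewise on the Cartan side, and the characteristic-free Chevalley restriction for $\mathfrak{gl}_n$ (which is elementary: restriction is injective because regular semisimple elements are dense, and surjective because the $s_i$ hit the elementary symmetric functions) is $KI$-equivariant and passes to $KI$-invariants. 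This is more self-contained than invoking Kac--Weisfeiler and makes transparent why one must go through $\mathfrak{gl}_n$ rather than $\mathfrak{sl}_n$ when $p \mid n$. Two small corrections though. First, the $\mathfrak{gl}_n$-descent argument as you set it up needs no hypothesis on $p$ at all, since you are matching up $KI$-invariants on both sides rather than trying to make each $s_i$ individually descend; the remark that ``$p\mid n$ is precisely what lets one adjust the $s_i$'' is a red herring, and your separate injectivity paragraph is rendered redundant by the stronger statement. Second, the closing sentence is inaccurate: the general Kac--Weisfeiler isomorphism does not follow from $S(\mathfrak{h})^W$ being polynomial with $S(\mathfrak{h})$ free over it, and indeed it holds in cases such as $(E_7,3)$, $(E_8,3)$, $(E_8,5)$ where, as recorded in Section 2.5 of the paper, $S(\mathfrak{h})^W$ is not even Cohen--Macaulay. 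For the non-type-$A$ groups you are in effect citing the same external theorem the paper cites, so the genuine novelty of your proposal is confined to the $PGL_n$ case — which is, admittedly, the case the paper actually uses.
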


\begin{proof}
This is \cite[Theorem 4]{KW}.
\end{proof}

When $G = PGL_n$ and $p|n$, the hypotheses of Theorem \ref{KW} are satisfied. This allows us to make sense of the following definition.

\begin{defn}\label{Steinberg quotient}
The $\emph{Steinberg quotient}$ is the map $\chi: \mathfrak{g} \to K^n$ defined by $\chi(Z) = (\eta^{-1}(f_1)(Z), \cdots, \eta^{-1}(f_n)(Z))$. Note that the nilpotent cone $\mathcal{N} = \chi^{-1}(0)$.
\end{defn} \vspace{4.4 mm}

\begin{lemma}\label{smooth elements}
The smooth points of $\mathcal{N}$ are precisely the regular nilpotent elements.
\end{lemma}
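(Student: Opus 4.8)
The plan is to show containment in both directions, using the Springer resolution $\mu: T^*\mathcal{B} \to \mathcal{N}$ as the main tool together with the freeness results from the first subsections. First I would recall that $GZ$, the orbit of regular nilpotent elements, is open and dense in $\mathcal{N}$ by the discussion following Lemma \ref{enhanced nilpotent cone smooth}. By Lemma \ref{birationality}, $\mu$ restricts to an isomorphism $\mu^{-1}(GZ) \to GZ$; since $T^*\mathcal{B} \cong \widetilde{\mathcal{N}}$ is smooth by Lemma \ref{enhanced nilpotent cone smooth} and Lemma \ref{enhanced nilpotent cone cotangent}, every point of $GZ$ is a smooth point of $\mathcal{N}$. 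This establishes that regular nilpotent elements are smooth points, so $GZ \subseteq \mathcal{N}_s$.

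For the reverse containment, the key point is to show that $\mathcal{N}$ is singular along every non-regular orbit, equivalently that $\mathcal{N}_s \subseteq GZ$. I would argue this by a dimension/local-ring count at a non-regular nilpotent element $X$. The crucial input is that $\mathcal{N} = \chi^{-1}(0)$ is a complete intersection: the Steinberg quotient $\chi: \mathfrak{g} \to K^n$ of Definition \ref{Steinberg quotient} is built from $n = \dim \mathfrak{h}$ functions $\eta^{-1}(f_1), \dots, \eta^{-1}(f_n)$, and by Theorem \ref{KW} these are (restrictions of) algebraically independent generators of $S(\mathfrak{g})^G \cong S(\mathfrak{h})^W$, the latter being polynomial by Theorem \ref{polynomial theorem}. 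Combined with Theorem \ref{freeness polynomial}, this forces $\dim \mathcal{N} = \dim \mathfrak{g} - n$, so $\mathcal{N}$ is cut out in $\mathfrak{g}$ by exactly $\mathrm{codim}(\mathcal{N})$ equations and is therefore a complete intersection, hence Cohen--Macaulay and equidimensional. Then at a smooth point the local ring is regular of dimension $\dim \mathcal{N} = \dim GZ$, and I would use the fact that the tangent space to $\mathcal{N}$ at $X$ contains the tangent space to the orbit $GX$ together with the line $KX$ (the cone direction): if $X$ is not regular then $\dim GX < \dim GZ$, and one shows the differential of $\chi$ at $X$ drops rank — equivalently $T_X\mathcal{N}$ is strictly larger than $\dim \mathcal{N}$ — so $X$ is singular. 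The cleanest route is to compute $\dim T_X \mathcal{N} = \dim \mathfrak{g} - \mathrm{rank}(d\chi_X)$ and show $\mathrm{rank}(d\chi_X) < n$ precisely when $X \notin GZ$; this is where the characterisation of regular elements via $d\chi$ enters.

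The main obstacle I anticipate is the last step: proving that the differential $d\chi_X$ has rank $< n$ for every non-regular nilpotent $X$, or equivalently that the non-regular locus is contained in the singular locus. In good characteristic this follows from Kostant-type arguments showing $d\chi_X$ has maximal rank exactly on the regular locus of all of $\mathfrak{g}$; in the present bad-characteristic setting ($G = PGL_n$, $p \mid n$, or the $G_2$, $p=2$ case) one needs to verify that the relevant statements from \cite{JN} and \cite{KW} survive, in particular that $GZ$ is still exactly the regular locus of $\chi$ restricted to $\mathcal{N}$. I would handle this by invoking Corollary 6.8 (or the relevant statement) of \cite{JN}, which already tells us $|\mu^{-1}(X)| = 1$ exactly on $GZ$ and $> 1$ off it; since a point with disconnected or positive-dimensional fibre under a proper birational map onto a normal-or-CM variety cannot be smooth (the fibre of a resolution over a smooth point of the base is a single reduced point), this gives $\mathcal{N}_s \subseteq GZ$ directly once we know $\mathcal{N}$ is Cohen--Macaulay. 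So the argument reduces to: (i) $\mathcal{N}$ is a complete intersection, hence CM; (ii) $\mu$ is proper and birational; (iii) a CM variety is smooth exactly where a proper birational morphism from a smooth variety is an isomorphism — concretely, smoothness of $X \in \mathcal{N}$ forces $\mu^{-1}(X)$ to be a single point, forcing $X \in GZ$ by \cite[Corollary 6.8]{JN}.
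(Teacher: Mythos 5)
Your first direction, that regular nilpotents are smooth points, is correct but uses a genuinely different route from the paper: you pass through the Springer resolution (Lemma \ref{birationality} plus smoothness of $T^*\mathcal{B}$) and the fact that $GZ$ is open, whereas the paper obtains both directions simultaneously from a differential criterion. Your ``main approach'' to the converse --- compute $\dim T_X\mathcal{N}=\dim\mathfrak{g}-\operatorname{rank}(d\chi_X)$ and show this exceeds $\dim\mathcal{N}$ off $GZ$ --- is in fact exactly what the paper does, quoting \cite[Claim 6.7.10]{CG} for the statement that $(d\chi)_Z$ is surjective iff $Z$ is regular and \cite[Proposition 7.11]{JN} for the fact that the $\eta^{-1}(f_i)$ generate the ideal of $\mathcal{N}$. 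The worry you raise about whether this survives in bad characteristic is precisely what Theorems \ref{polynomial theorem} and \ref{freeness polynomial} are for: they supply the two hypotheses (polynomiality of $S(\mathfrak{h})^W$ and freeness of $S(\mathfrak{h})$ over it) that the Chriss--Ginzburg argument needs, so no further input is required.

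The backup argument you propose in place of this, however, contains a genuine error. You assert that a smooth point of a Cohen--Macaulay (or normal) variety must have a single reduced point as its fibre under any proper birational morphism from a smooth source, and use this together with $|\mu^{-1}(X)|>1$ for $X\notin GZ$ to conclude $\mathcal{N}_s\subseteq GZ$. This assertion is false: the blow-up $\operatorname{Bl}_0\mathbb{A}^2\to\mathbb{A}^2$ is proper and birational, the source is smooth, the target is smooth (hence normal and Cohen--Macaulay), the origin is a smooth point of the target, yet the fibre over the origin is $\mathbb{P}^1$. Zariski's main theorem and van der Waerden purity control the shape of the exceptional locus in the source but say nothing that forces the exceptional image in a normal or CM target to avoid smooth points. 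In short, knowing that the fibre of $\mu$ jumps off $GZ$ does not tell you that those points are singular; you really do need the rank computation on $d\chi$, which is available here because of the polynomiality and freeness results proved earlier in the section.
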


\begin{proof}
By the assumptions on the prime $p$, applying Theorem \ref{polynomial theorem} and Theorem \ref{freeness polynomial} shows that $S(\mathfrak{h})$ is a free $S(\mathfrak{h})^W$-module and $S(\mathfrak{h})^W$ is a polynomial ring, with generators $f_1, \cdots, f_n$. Hence the argument for \cite[Claim 6.7.10]{CG} applies and the Steinberg quotient $\chi$ satisfies, for $Z \in \mathfrak{g}$, the condition that $(d\chi)_Z$ is surjective if and only if $Z$ is regular. By \cite[Proposition 7.11]{JN}, for each $b = (b_1, \cdots, b_n) \in K^n$, the ideal of $\chi^{-1}(b)$ is generated by all $\eta^{-1}(f_i) - b_i$. \\

By \cite[I.5]{Ha}, $Z \in \chi^{-1}(b)$ is a smooth point if and only if the $d(\eta(f_i) - b_i)$ are linearly independent at $Z$ if and only if the map $(d\chi)_Z$ is surjective. Let $b=0$. Then the smooth points in $\chi^{-1}(0)$ are the regular elements contained in $\chi^{-1}(0)$, and so the smooth points of $\mathcal{N}$ are precisely the regular nilpotent elements. 
\end{proof}

\begin{thm}\label{Springer resolution}
$\mu: T^*\mathcal{B} \to \mathcal{N}$ is a resolution of singularities for $\mathcal{N}$.
\end{thm}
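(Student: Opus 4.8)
The plan is to verify the three defining properties of a resolution of singularities for the Springer map $\mu \colon T^*\mathcal{B} \to \mathcal{N}$: that the source is smooth, that $\mu$ is proper, and that $\mu$ restricts to an isomorphism over the smooth locus of $\mathcal{N}$ (hence in particular is birational, as $\mathcal{N}$ is irreducible). Each ingredient has in fact already been assembled in the preceding lemmas, so the proof is largely a matter of fitting them together.

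\emph{Smoothness of the source.} By Lemma \ref{enhanced nilpotent cone cotangent} there is a $G$-equivariant isomorphism $T^*\mathcal{B} \cong \widetilde{\mathcal{N}}$, and Lemma \ref{enhanced nilpotent cone smooth} shows that $\widetilde{\mathcal{N}}$ is a smooth irreducible variety. Under this identification $\mu$ is the restriction to $\widetilde{\mathcal{N}}$ of the first projection $\widetilde{\mathfrak{g}} \to \mathfrak{g}$ of Definition \ref{enhanced nilpotent cone}, so it is a genuine morphism of varieties. \emph{Properness.} In the course of the proof of Lemma \ref{enhanced nilpotent cone smooth} we saw that $\widetilde{\mathcal{N}}$ is a closed subvariety of $\mathcal{N} \times \mathcal{B}$; since $\mathcal{B}$ is a projective variety (Lemma \ref{borel subgp variety}), the projection $\mathcal{N} \times \mathcal{B} \to \mathcal{N}$ is proper, and its restriction to the closed subvariety $\widetilde{\mathcal{N}}$, which is exactly $\mu$, is therefore proper as well.

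\emph{Isomorphism over the smooth locus.} Write $\mathcal{N}_s$ for the smooth locus of $\mathcal{N}$ and $GZ$ for the regular nilpotent orbit. By Lemma \ref{smooth elements} we have $\mathcal{N}_s = GZ$, and by Lemma \ref{birationality} the restriction $\mu^{-1}(GZ) \to GZ$ is an isomorphism of varieties; hence $\mu$ is an isomorphism over $\mathcal{N}_s$. Finally, Lemma \ref{density} gives that $\mu^{-1}(\mathcal{N}_s)$ is dense in $\widetilde{\mathcal{N}}$, so $\mu$ restricts to an isomorphism between dense open subvarieties of two irreducible varieties and is therefore birational. Combining the three properties yields the statement.

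Because every step is supplied by earlier results, I do not expect a genuine obstacle. The one point that requires care is the properness argument, where one must recall that $\mathcal{N}$ itself is merely affine and that it is the completeness of $\mathcal{B}$, not of $\mathcal{N}$, that forces $\mu$ to be proper. A secondary remark is that we invoke the standard notion of a resolution of singularities — a proper birational morphism from a smooth variety which is an isomorphism over the smooth locus — and make no claim about the geometry of the exceptional fibres.
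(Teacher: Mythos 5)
Your proof is correct and follows essentially the same route as the paper: smoothness of $T^*\mathcal{B}$ via Lemmas \ref{enhanced nilpotent cone cotangent} and \ref{enhanced nilpotent cone smooth}, properness of $\mu$, density of $\mu^{-1}(\mathcal{N}_s)$, and birationality via Lemma \ref{birationality}. The only differences are that you supply an explicit properness argument (closedness of $\widetilde{\mathcal{N}}$ in $\mathcal{N}\times\mathcal{B}$ plus projectivity of $\mathcal{B}$) where the paper cites Jantzen, and you explicitly invoke Lemma \ref{smooth elements} to identify $\mathcal{N}_s$ with the regular orbit $GZ$ — a step the paper leaves implicit when passing from $GZ$ to $\mathcal{N}_s$ in its use of Lemma \ref{birationality}.
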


\begin{proof}
By Lemma \ref{enhanced nilpotent cone smooth} and Lemma \ref{enhanced nilpotent cone cotangent}, $\widetilde{\mathcal{N}}$ is a smooth irreducible variety. Furthermore, $\mu$ is proper by \cite[Lemma 6.10(1)]{JN}. By Lemma \ref{density}, $\mu^{-1}(\mathcal{N}_s)$ is dense in $\widetilde{\mathcal{N}}$, and by Lemma \ref{birationality}, $\mu$ is a birational morphism between $\mu^{-1}(\mathcal{N}_s)$ and $\mathcal{N}_s$. Hence $\mu$ is a resolution of singularities.
\end{proof}

\subsection{The dual nilpotent cone is a normal variety}\label{Geometric results}

In this section, we demonstrate that the dual nilcone $\mathcal{N}^*$ is a normal variety in the case $G = PGL_n$, $p|n$. 

\begin{defn}
Since we have a $G$-equivariant isomorphism $\kappa: \mathfrak{g} \to \mathfrak{g}^*$ by \cite[Section 6.5]{JN}, the $\emph{dual nilcone}$ $\mathcal{N}^*$ may be defined as:

\begin{gather*}
\mathcal{N}^* = \lbrace X \in \mathfrak{g}^* \mid f(X) = 0 \text{ for all } f \in S^+(\mathfrak{g})^G \rbrace.
\end{gather*}
\end{defn}

The same argument as in Proposition \ref{affine nilpotent cone} shows that $\mathcal{N} = V(S^+(\mathfrak{g})^G)$ is an affine variety. 

We next review some basic properties of normal rings and varieties. \\

\begin{defn} \cite[Definition 2.2.12]{CG}
A finitely generated commutative $K$-algebra $A$ is $\textit{Cohen-Macaulay}$ if it contains a subalgebra of the form $\mathcal{O}(V)$ such that $A$ is a free $\mathcal{O}(V)$-module of finite rank, and $V$ is a smooth affine scheme. \\

A scheme $X$ defined over $K$ is $\emph{Cohen-Macaulay}$ if, at each point $x \in X$, the local ring $\mathcal{O}_{X,x}$ is a Cohen-Macaulay ring.
\end{defn} \vspace{4.4 mm}

\begin{defn}
A commutative ring $A$ is $\textit{normal}$ if the localization $A_{\mathfrak{p}}$ for each prime ideal $\mathfrak{p}$ is an integrally closed domain. \\

A variety $V$ is $\textit{normal}$ if, for any $x \in V$, the local ring $\mathcal{O}_{V,x}$ is a normal ring. \\
\end{defn}

We now begin the proof of the normality of the dual nilpotent cone $\mathcal{N}^*$. We adapt the arguments in \cite{BL} to our situation. \\

\begin{thm}\label{important lemma}
Let $X$ be an irreducible affine Cohen-Macaulay scheme defined over $K$ and $U \subseteq X$ an open subscheme. Suppose $\text{dim } (X/U) \leq \text{dim } X - 2$ and that the scheme $U$ is normal. Then the scheme $X$ is normal.
\end{thm}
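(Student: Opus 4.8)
The plan is to verify Serre's criterion for normality, that is, to show that $X$ satisfies conditions $R_1$ and $S_2$. First I would record a preliminary observation: since $\dim(X \setminus U) \le \dim X - 2 < \dim X$ and $X$ is irreducible, the open subscheme $U$ is non-empty and dense, so in particular it contains the generic point $\eta$ of $X$.

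The condition $S_2$ is immediate: $X$ is Cohen--Macaulay, so every local ring $\mathcal{O}_{X,x}$ is Cohen--Macaulay, whence $\operatorname{depth}\mathcal{O}_{X,x} = \dim \mathcal{O}_{X,x} \ge \min(2, \dim \mathcal{O}_{X,x})$; thus $X$ satisfies $S_2$, and in particular $S_1$. Moreover $\mathcal{O}_{X,\eta} = \mathcal{O}_{U,\eta}$ is a zero-dimensional normal local ring, hence a field, so $X$ satisfies $R_0$; together with $S_1$ this forces $X$ to be reduced, and since it is irreducible it is integral. For $R_1$, let $x \in X$ with $\dim \mathcal{O}_{X,x} \le 1$. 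Because $X$ is an irreducible scheme of finite type over $K$, it is catenary and equidimensional, so $\dim \overline{\{x\}} = \dim X - \dim \mathcal{O}_{X,x} \ge \dim X - 1$. If $x$ belonged to the closed set $X \setminus U$, then $\overline{\{x\}} \subseteq X \setminus U$ would give $\dim(X \setminus U) \ge \dim X - 1$, contradicting the hypothesis $\dim(X \setminus U) \le \dim X - 2$. Hence $x \in U$, and since $U$ is normal the ring $\mathcal{O}_{X,x} = \mathcal{O}_{U,x}$ is an integrally closed Noetherian local domain of dimension at most $1$; such a ring is either a field or a discrete valuation ring, and in either case it is regular. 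Therefore every point of codimension at most $1$ in $X$ is a regular point, i.e. $X$ satisfies $R_1$.

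Having established that $X$ is an integral Noetherian scheme satisfying $R_1$ and $S_2$, I would conclude by Serre's criterion that each local ring $\mathcal{O}_{X,x}$ is an integrally closed domain, which is exactly the assertion that $X$ is normal. The one step requiring genuine care is the codimension count in the verification of $R_1$: it rests on the dimension formula $\dim \overline{\{x\}} + \dim \mathcal{O}_{X,x} = \dim X$ valid for irreducible schemes of finite type over a field, and on reading the hypothesis ``$\dim(X/U) \le \dim X - 2$'' as a bound on the dimension of the closed complement $X \setminus U$; everything else is a formal application of the Cohen--Macaulay hypothesis and of Serre's normality criterion.
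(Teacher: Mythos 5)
Your proof is correct, and it takes a genuinely different route from the paper's. The paper (following \cite[Lemma 2.2, Corollary 2.3]{BL}, which it then unpacks in Lemma \ref{stackexchange lemma} and Lemma \ref{function field isomorphism}) proves the stronger statement that the restriction map $\mathcal{O}(X) \to \mathcal{O}(U)$ is an isomorphism: the Cohen--Macaulay hypothesis is used via its definition as a finite free module over the coordinate ring of a smooth affine scheme, which lets one push the sheaf down and apply a Hartogs-type extension argument on the smooth base, where the codimension hypothesis ensures functions extend over the missing locus. Normality of $X$ then follows trivially from normality of $U$, since the two coordinate rings are literally identified. You instead verify Serre's criterion directly: Cohen--Macaulay gives $S_2$ for free, and the codimension hypothesis forces every codimension-$\leq 1$ point to lie in $U$, where normality of $U$ supplies $R_1$. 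Both arguments are sound; yours is shorter and more conceptual, while the paper's route produces the explicit isomorphism of global sections as a byproduct (which is in the same spirit as what is later needed in Corollary \ref{global sections isomorphism}). The one step in your argument that deserves a word of justification is the dimension formula $\dim\overline{\{x\}} + \dim\mathcal{O}_{X,x} = \dim X$; this is valid here because, per the paper's Definition of Cohen--Macaulay, $\mathcal{O}(X)$ is a finitely generated $K$-algebra, so $X$ is of finite type over a field, and you have already established that it is integral before invoking the formula.
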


\begin{proof}
This is \cite[Corollary 2.3]{BL}.
\end{proof}

We aim to apply Theorem \ref{important lemma} to our situation. We begin with the following lemma, a variant on Hartogs' lemma. \\

\begin{lemma}\label{stackexchange lemma}
Let $Y$ be an affine normal variety and $Z \subseteq Y$ be a subvariety of codimension at least 2. Then any rational function on $Y$ which is regular on $Y \setminus Z$ can be extended to a regular function on $Y$.
\end{lemma}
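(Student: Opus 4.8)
The plan is to prove this by reducing to a statement about the integral closure of the coordinate ring and invoking the algebraic version of Hartogs' phenomenon for normal domains. First I would set $A := K[Y]$, which is a normal domain since $Y$ is an affine normal variety, and let $I \subseteq A$ be the radical ideal of the subvariety $Z$. A rational function $f$ on $Y$ regular on $Y \setminus Z$ is precisely an element of $\bigcap_{\mathfrak{p}} A_{\mathfrak{p}}$, where $\mathfrak{p}$ runs over the primes of $A$ not containing $I$ (equivalently, the primes corresponding to points of $Y \setminus Z$). The goal is then to show this intersection equals $A$ itself.

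The key step is the classical fact that a Noetherian normal domain $A$ satisfies $A = \bigcap_{\operatorname{ht}\mathfrak{p} = 1} A_{\mathfrak{p}}$, the intersection over all height-one primes (this is Serre's criterion $(R_1) + (S_2)$, or equivalently the statement that $A$ equals the intersection of its localizations at the minimal primes of principal ideals). Now I would use the codimension hypothesis: since $Z$ has codimension at least $2$ in $Y$, no height-one prime $\mathfrak{p}$ of $A$ can contain $I$, because $V(\mathfrak{p})$ has codimension $1$ while every component of $Z = V(I)$ has codimension $\geq 2$. Therefore every height-one prime lies in the index set over which we are intersecting to obtain $f$, so
\begin{gather*}
f \in \bigcap_{\mathfrak{p} \not\supseteq I} A_{\mathfrak{p}} \subseteq \bigcap_{\operatorname{ht}\mathfrak{p} = 1} A_{\mathfrak{p}} = A,
\end{gather*}
which shows $f$ extends to a global regular function on $Y$.

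The main obstacle — or rather the only point requiring care — is to have cleanly available the two inputs: that $A$ is Noetherian and normal (immediate, since $Y$ is an affine variety and normality of the variety is by definition local normality of $\mathcal{O}_{Y,x}$, which globalizes because $A_{\mathfrak{m}} = \mathcal{O}_{Y,x}$ for the maximal ideal at a closed point and the localizations at non-maximal primes are further localizations of these), and the equality $A = \bigcap_{\operatorname{ht}\mathfrak{p}=1} A_{\mathfrak{p}}$ for Noetherian normal domains, which I would cite from a standard commutative algebra reference (e.g.\ Matsumura or Bourbaki, \emph{Commutative Algebra}, on the characterization of normal Noetherian domains via condition $(S_2)$ together with regularity in codimension one). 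Given these, the argument is a one-line containment chain as above; there is no genuine geometric difficulty, and in particular irreducibility of $Y$ is what lets us work with the single domain $A$ rather than a product of rings.
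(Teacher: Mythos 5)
Your proof is correct and takes essentially the same approach as the paper: both hinge on the characterization $A = \bigcap_{\operatorname{ht}\mathfrak{p}=1} A_{\mathfrak{p}}$ for a Noetherian normal domain, combined with the observation that codimension $\geq 2$ of $Z$ forces every height-one prime to avoid $I$, so that $\mathcal{O}(Y\setminus Z)$ sits inside that intersection. The paper routes through basic opens $D(f)$ for $f \in I \setminus \mathfrak{p}$ rather than directly identifying $\mathcal{O}(Y\setminus Z)$ with $\bigcap_{\mathfrak{p}\not\supseteq I} A_{\mathfrak{p}}$, but this is only a cosmetic difference.
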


\begin{proof}
Write $Y = \text{Spec } B$, where $B$ is a normal domain. Set $Z := V(I)$ for some ideal $I$, and write $U := Y \setminus Z$. Then $U = \bigcup_{f \in I} D(f)$, where $D(f)$ denotes the basic open sets in the Zariski topology. \\

Let $\mathfrak{p}$ be a prime ideal of height 1. By assumption, $\text{ht } I \geq 2$, and so there exists some $f \in I$ with $f \notin \mathfrak{p}$. It follows that $B_f \subseteq B_{\mathfrak{p}}$. \\

Let $a/b$ be a regular function on $U$, with $a/b \in \text{Frac} B$, the field of fractions of $B$. Since $\mathfrak{p}$ has height 1, we can find $f \in I \setminus \mathfrak{p}$. Then $a/b$ is regular on $D(f)$, and so $a/b \in \mathcal{O}(D(f)) = B_f \subseteq B_{\mathfrak{p}}$. As $\mathfrak{p}$ was arbitrary, $a/b \in \bigcap_{\text{ht}\mathfrak{p} = 1} B_{\mathfrak{p}} = B$. Hence $a/b$ can be extended to a regular function on $Y$.
\end{proof} 

\begin{lemma}\label{function field isomorphism}
Let $X$ be an affine Cohen-Macaulay scheme with an open subscheme $U$. Let $r: \mathcal{O}(X) \to \mathcal{O}(U)$ be the restriction morphism. Then: \\

(a) if $\text{dim }(X \setminus U) < \text{dim } X$, then $r$ is injective, \\
(b) if $\text{dim } (X \setminus U) \leq \text{dim } X - 2$, then $r$ is an isomorphism.
\end{lemma}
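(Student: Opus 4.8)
The plan is to treat parts (a) and (b) separately; the injectivity claim in (b) follows immediately from (a), so the real content of (b) is the surjectivity, which is the only place the Cohen--Macaulay hypothesis does any work. Throughout write $A := \mathcal{O}(X)$, a Noetherian Cohen--Macaulay ring, and $Z := X \setminus U = V(I)$ for some ideal $I \subseteq A$. The key structural input is that a Cohen--Macaulay ring has no embedded associated primes, so $\text{Ass}(A) = \text{Min}(A)$; one may also assume $X$ irreducible, as will be the case in all our applications, which sidesteps pathologies with disconnected $X$ of unequal dimensions.

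For (a): suppose $f \in A$ has $r(f) = 0$, i.e. $f$ restricts to $0$ on $U = \bigcup_{g \in I} D(g)$. Then the image of $f$ in $A_g$ is zero for every $g \in I$, so $g^{n}f = 0$ for some $n = n(g)$, giving $I \subseteq \sqrt{\text{Ann}_A(f)}$ and hence $\text{Supp}(Af) = V(\text{Ann}_A(f)) \subseteq V(I) = Z$. If $f \neq 0$, the nonzero cyclic submodule $Af \hookrightarrow A$ has an associated prime $\mathfrak{p} \in \text{Ass}(Af) \subseteq \text{Ass}(A) = \text{Min}(A)$, so $V(\mathfrak{p})$ is an irreducible component of $X$ contained in $\text{Supp}(Af) \subseteq Z$. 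Since a connected Cohen--Macaulay scheme is equidimensional, $\dim V(\mathfrak{p}) = \dim X$, forcing $\dim Z \geq \dim X$ and contradicting the hypothesis $\dim Z < \dim X$. Hence $f = 0$, so $r$ is injective.

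For (b): if $\dim X \leq 1$ the hypothesis forces $Z = \emptyset$, so $U = X$ and there is nothing to prove; otherwise $\dim Z \leq \dim X - 2 < \dim X$, and $r$ is injective by (a). For surjectivity I would invoke the local cohomology exact sequence attached to the closed set $Z \subseteq X$ and the structure sheaf,
$$0 \longrightarrow H^0_Z(A) \longrightarrow A \xrightarrow{\ r\ } \mathcal{O}(U) \longrightarrow H^1_Z(A) \longrightarrow H^1(X, \mathcal{O}_X) = 0,$$
the last term vanishing because $X$ is affine. Since $A$ is Cohen--Macaulay we have $\text{grade}(I, A) = \text{codim}(Z, X) = \dim X - \dim Z \geq 2$, and local cohomology vanishes in degrees strictly below the grade, so $H^0_Z(A) = H^1_Z(A) = 0$. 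The exact sequence then shows $r$ is an isomorphism.

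The step I expect to be the main obstacle is surjectivity in (b): extending a function regular on $U$ across a codimension-$\geq 2$ closed set. This is exactly where a naive ``Hartogs'' argument fails without a depth hypothesis, and it is the sole essential use of the Cohen--Macaulay condition, entering through the equality $\text{grade}(I, A) = \text{codim}(Z, X) \geq 2$, i.e. the $S_2$ property of $X$. A reader wishing to avoid local cohomology can instead argue in the spirit of Lemma \ref{stackexchange lemma}: reduce to the identity $A = \bigcap_{\text{ht}\,\mathfrak{p} \leq 1} A_{\mathfrak{p}}$ inside the total quotient ring (valid for $S_2$ rings), and observe that any $s \in \mathcal{O}(U)$ already lies in $A_{\mathfrak{p}}$ for every prime $\mathfrak{p}$ of height $\leq 1$, since such $\mathfrak{p}$ cannot contain $I$. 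Everything else --- part (a), and the injectivity half of (b) --- is routine bookkeeping with associated primes.
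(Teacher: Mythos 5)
Your argument is correct and takes a genuinely different route from the paper's. The paper's definition of Cohen--Macaulay is the Noether-normalisation form --- $\mathcal{O}(X)$ is finite free over $\mathcal{O}(Y)$ for some smooth affine $Y$ --- and the proof leans on this presentation directly: it projects $X$ down to $Y$ via the finite map $p$, replaces $U$ by $p^{-1}(W)$ for $W := Y \setminus p(X\setminus U)$, identifies $r$ with the restriction map on sections of the free $\mathcal{O}_Y$-module $F := p_*\mathcal{O}_X$, and then uses Lemma~\ref{stackexchange lemma} (Hartogs for the normal variety $Y$) together with freeness of $F$ to get injectivity/surjectivity. You instead argue intrinsically on $X$: unmixedness of Cohen--Macaulay rings ($\mathrm{Ass}=\mathrm{Min}$) for injectivity, and either the local-cohomology exact sequence with the grade bound $\mathrm{grade}(I,A)=\mathrm{codim}(Z,X)\geq 2$, or the $S_2$ identity $A=\bigcap_{\mathrm{ht}\,\mathfrak{p}\leq 1}A_{\mathfrak{p}}$, for surjectivity. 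Your version isolates the precise hypothesis doing the work ($S_2$, i.e.\ depth $\geq 2$ along $Z$) and does not require the ``finite free over smooth'' presentation at all; the paper's version stays at the level of varieties and avoids local cohomology, at the cost of pushing everything through the auxiliary projection $p$. Both proofs implicitly use equidimensionality of $X$ to pass between $\dim Z$ and $\mathrm{codim}(Z,X)$; you flag this honestly (connected CM, or irreducible), whereas the paper leaves it tacit, so your version is, if anything, slightly more careful on that point.
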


\begin{proof}
We expand on the proof given in \cite[Lemma 2.2]{BL}. For ease of notation, we suppose $\mathcal{O}(X)$ is a finitely generated $\mathcal{O}(Y)$-module for some smooth affine scheme $Y$. Now the projection map $p: X \to Y$ is a finite morphism and hence is closed. Without loss of generality, we can shrink $U$, replacing it by a smaller open subset $p^{-1}(W)$, where $W = Y \setminus p(X \setminus U)$ is an open subset of $Y$. \\

Let $F := p_*(\mathcal{O}_X)$. This is a free $\mathcal{O}_Y$-module and we clearly have $\Gamma(Y,F) = p_*(\mathcal{O}_X)(Y) = \mathcal{O}_X(p^{-1}(Y)) = \mathcal{O}_X(X)$, and similarly $\Gamma(W,F) = p_*(\mathcal{O}_X)(W) = \mathcal{O}_X(p^{-1}(W)) = \mathcal{O}_X(U)$. Hence the restriction morphism $r$ agrees with the natural restriction map $r: \Gamma(Y,F) \to \Gamma(W,F)$. \\

If $\text{dim }(X \setminus U) < \text{dim } X$, then $\text{dim }(p(U \setminus X)) < \text{dim } (p(X))$, so $\text{dim }(Y \setminus W) < \text{dim } Y$, and so $r$ is injective. \\

Similarly, if $\text{dim } (X \setminus U) \leq \text{dim } X - 2$, then $\text{dim } (Y \setminus W) \leq \text{dim } Y - 2$. Hence, by Lemma \ref{stackexchange lemma}, any regular function on $W$ can be extended to a regular function on $Y$. Furthermore, $F$ is a free $\mathcal{O}_Y$-module; it follows that $r$ is surjective.
\end{proof}

As an immediate consequence, we see that if the scheme $U$ is reduced and normal, then so is $X$. \\

We now demonstrate that the hypotheses in Theorem \ref{important lemma} are satisfied in our situation. Recall that $\mathcal{N}^*$ is an affine variety. It suffices to show that $\mathcal{N}^*$ is irreducible and Cohen-Macaulay. \\

\begin{defn}
$\lambda \in \mathfrak{h}^*$ is $\emph{regular}$ if its centraliser in $\mathfrak{g}$ under the natural $\mathfrak{g}$-action on $\mathfrak{g}^*$ coincides with the Cartan subalgebra $\mathfrak{h}$. A general $\lambda \in \mathfrak{g}^*$ is $\emph{regular}$ if its coadjoint orbit contains a regular element of $\mathfrak{h}^*$. 
\end{defn}

The subvariety $U$ in Lemma \ref{function field isomorphism} will be taken to be the subset of regular nilpotent elements. \\

\begin{prop} \label{codimension prop}
Suppose $p$ is nonspecial for $G$. Then:

(a) the dual nilcone $\mathcal{N}^* \subseteq \mathfrak{g}^*$ is a closed irreducible subvariety of $\mathfrak{g}^*$, and it has codimension $r$ in $G$, where $r$ is the rank of $G$. \\

(b) Let $U$ denote the set of regular elements of $\mathcal{N}^*$. Then $U$ is a single coadjoint orbit, which is open in $\mathcal{N}^*$, and its complement has codimension $\geq 2$.
\end{prop}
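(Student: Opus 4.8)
The plan is to transfer everything through the $G$-equivariant isomorphism $\kappa: \mathfrak{g} \to \mathfrak{g}^*$. Since $\kappa$ is an isomorphism of varieties intertwining the adjoint and coadjoint actions and satisfies $\kappa(\mathcal{N}) = \mathcal{N}^*$, it carries the regular nilpotent orbit $GZ \subseteq \mathcal{N}$ of Definition~\ref{regular element} onto the set $U$ of regular elements of $\mathcal{N}^*$. It therefore suffices to prove: that $\mathcal{N}$ is a closed irreducible subvariety of $\mathfrak{g}$ of codimension $r$; and that $GZ$ is open in $\mathcal{N}$ with complement of codimension $\geq 2$. The first assertion of (b), that $U$ is a single coadjoint orbit, is then immediate, being the image of the single orbit $GZ$ under the $G$-equivariant map $\kappa$.

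For (a): closedness of $\mathcal{N}$ in $\mathfrak{g}$ is Proposition~\ref{affine nilpotent cone}, and irreducibility follows since $\mathcal{N} = \mu(\widetilde{\mathcal{N}})$ is the image of the irreducible variety $\widetilde{\mathcal{N}}$ (Lemma~\ref{enhanced nilpotent cone smooth}); equivalently $\mathcal{N} = \overline{GZ}$. For the codimension I would use the Springer resolution: by Lemma~\ref{enhanced nilpotent cone cotangent} and Theorem~\ref{Springer resolution}, $\mu$ is a proper birational morphism from $\widetilde{\mathcal{N}} \cong T^*\mathcal{B}$ onto $\mathcal{N}$, so $\dim \mathcal{N} = \dim T^*\mathcal{B} = 2\dim\mathcal{B} = \dim\mathfrak{g} - r$ (using $\dim\mathfrak{g} = r + 2\dim\mathcal{B}$). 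Applying $\kappa$, the variety $\mathcal{N}^*$ has codimension $r$ in $\mathfrak{g}^*$.

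For (b): openness of $GZ$ in $\mathcal{N}$ is recorded in the discussion preceding Definition~\ref{regular element} (see also \cite[Corollary 6.8]{JN}), so $U = \kappa(GZ)$ is open in $\mathcal{N}^*$. It remains to show $\dim(\mathcal{N}\setminus GZ) \leq \dim\mathcal{N} - 2$. Now $\mathcal{N}\setminus GZ$ is a finite union of nilpotent orbit closures, each of dimension $< \dim GZ = \dim\mathcal{N}$, so it is enough to exclude a nilpotent orbit of codimension exactly $1$ in $\mathcal{N}$. For $G = PGL_n$ I would argue from the classification of nilpotent orbits: Jordan type identifies the nilpotent $PGL_n$-orbits on $\mathcal{N}\subseteq\mathfrak{pgl}_n$ with partitions $\lambda\vdash n$ (the passage from $\mathfrak{gl}_n$ is a dimension-preserving bijection — adding a nonzero scalar destroys nilpotency, and a centralizer comparison shows the orbit dimension is unchanged), with $\dim\mathcal{O}_\lambda = n^2 - \sum_i m_i^2$ where $(m_1,m_2,\dots) = \lambda^t$ is the transpose partition, and $GZ = \mathcal{O}_{(n)}$. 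Since $\sum_i m_i^2 \equiv \sum_i m_i = n \pmod 2$, every $\dim\mathcal{O}_\lambda$ is congruent to $n^2 - n \bmod 2$; hence there is no orbit of dimension $\dim\mathcal{N} - 1$, and the largest proper orbit $\mathcal{O}_{(n-1,1)}$ has dimension exactly $\dim\mathcal{N} - 2$. (For the more general groups considered later, in particular $G_2$ with $p=2$, one instead invokes the fact that for nonspecial $p$ the subregular nilpotent orbit has codimension $2$ in $\mathcal{N}$; see \cite[Section 5.6]{PS}.) Transporting back along $\kappa$ gives $\dim(\mathcal{N}^*\setminus U) \leq \dim\mathcal{N}^* - 2$.

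The crux is the codimension estimate in (b): part (a) and the openness/single-orbit statements in (b) are essentially bookkeeping with results already in place, whereas ruling out a codimension-$1$ nilpotent orbit is exactly the point where the characteristic can interfere and where the hypothesis that $p$ be nonspecial is used. In type $A$ the parity observation settles it cleanly; outside type $A$ one must appeal to the finer structure theory of the subregular class in bad characteristic.
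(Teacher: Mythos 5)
Your proof is correct in spirit for the cases where a $G$-equivariant isomorphism $\kappa:\mathfrak{g}\to\mathfrak{g}^*$ is available (the $PGL_n$ and $G_2$ cases the paper ultimately needs), but it takes a genuinely different route from the paper, and the difference matters for scope. The paper works \emph{intrinsically} with $\mathfrak{g}^*$: it introduces the incidence variety
\[
S := \lbrace (gB, \zeta) \in G/B \times \mathfrak{g}^* \mid g\cdot\zeta \in \mathfrak{b}^{\perp}\rbrace,
\]
projects to $G/B$ to get $\dim S = \dim G - r$, shows $p_2(S)=\mathcal{N}^*$ is closed and irreducible (using completeness of $G/B$), and then exhibits a finite fibre of $p_2$ to conclude $\dim\mathcal{N}^* = \dim S$. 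For the codimension bound it replaces $B$ by rank-$1$ parabolics $P_i$ and the analogous varieties $S_i$, gets $\dim S_i = \dim G - (r+2)$, and uses the characterisation from \cite{GKM} that $\zeta$ is non-regular iff $\zeta\in p_2(S_i)$ for some $i$. This is a single uniform dimension count, valid for any simple $G$ with $p$ nonspecial, and it never invokes $\kappa$ or the Springer resolution. Your proof instead transports everything along $\kappa$ and uses the Springer resolution for the dimension of $\mathcal{N}$, and a classification of nilpotent orbits by partition for the codimension estimate (cleanly via the parity of $n^2 - \sum m_i^2$ in type $A$). Two caveats: first, the proposition is stated under the hypothesis ``$p$ nonspecial,'' which is strictly weaker than what is needed for $\kappa$ to exist or for Theorem~\ref{Springer resolution} (which was proved only for $G = PGL_n$, $p\mid n$) to apply — so as written your argument does not establish the proposition in its stated generality, whereas the paper's does. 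Second, your handling of the non-type-$A$ cases (``one instead invokes \ldots see \cite{PS}'') delegates precisely the point where the characteristic could interfere; the paper's $S_i$-construction gives a self-contained and uniform replacement for that appeal. What your argument buys, when it applies, is an elementary and transparent codimension bound in type $A$ without the parabolic machinery.
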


\begin{proof}
(a) We define an auxiliary variety $S$ via:

\begin{gather*}
S := \lbrace (gB, \zeta) \in G/B \times \mathfrak{g}^* \mid g \cdot \zeta \in \mathfrak{b}^{\perp} \rbrace.
\end{gather*}

This subset of $G/B \times \mathfrak{g}^*$ is closed. Define a map $\phi: G \times \mathfrak{b}^{\perp} \to G/B \times \mathfrak{g}^*$ by $\phi(g, \zeta) = (gB, g^{-1} \cdot \zeta)$. Now the image of $\phi$ is contained in $S$, and we can also see that $\text{im}(\phi) \cong S$ since we have a linear isomorphism $\mathfrak{b}^{\perp} \to g^{-1} \cdot \mathfrak{b}^{\perp}$. Hence the image of $\phi$ coincides with $S$. It follows that $S$ is a morphic image of an irreducible variety, and hence $S$ is itself an irreducible subvariety. \\

Let $p_1: G/B \times \mathfrak{g}^* \to G/B$ and $p_2: G/B \times \mathfrak{g}^* \to \mathfrak{g}^*$ be the obvious projection maps. Clearly $p_1(S) = G/B$. The fiber of $gB$ under the map $p_1$ is $g^{-1} \cdot \mathfrak{n}$, which is isomorphic to $\mathfrak{n}$. Hence the fibers are equidimensional, and we have:

\begin{gather*}
\text{dim } S = \text{dim } G/B + \text{dim } \mathfrak{n}, \\
\text{dim } S = \text{dim } G/B + \text{dim } U \\
= \text{dim } G - r.
\end{gather*}

Using the second projection, $\text{dim } p_2(S) \leq \text{dim } G - r$, with equality if some fibre is finite (as a set). First notice that:

\begin{gather*}
p_2(S) = \lbrace \zeta \in \mathfrak{g}^* \mid \exists g \in G \text{ s.t. } g \cdot \zeta \in \mathfrak{b}^{\perp} \rbrace = \mathcal{N}^*.
\end{gather*} 

Hence $\mathcal{N}^*$ is irreducible, and, since the flag variety $G/B$ is complete by \cite{Bo}, $\mathcal{N}^*$ is closed. We show that there exists some $\zeta \in \mathfrak{g}^*$ with:

\begin{gather*}
| \lbrace gB \mid g \cdot \zeta \in \mathfrak{b}^{\perp} \rbrace | < \infty, \\
| \lbrace gB \mid \zeta(\text{Ad}_g^{-1}(\mathfrak{b})) = 0 \rbrace | < \infty. 
\end{gather*} 

By \cite[Proposition 2]{HS}, we have the following dimension formula:

\begin{gather*}
\text{dim } p_1(p_2^{-1}(\zeta)) = \frac{\text{dim } Z_G(\zeta) - r}{2}.
\end{gather*}

Since $p$ is nonspecial for $G$, the set of regular nilpotent elements $U$ in $\mathcal{N}^*$ is non-empty, by \cite[Section 6.4]{GKM}, and thus we can always pick some $\zeta \in \mathfrak{g}^*$ such that $\text{dim } Z_G(\zeta) - r = 0$. Thus there exists $\zeta$ with $| \lbrace p_1(p_2^{-1}(\zeta)) \rbrace | < \infty$. \\

Now consider two points $(gB, \zeta), (hB, \zeta) \in S$. By definition, $g \cdot \zeta \in \mathfrak{b}^{\perp}$ and $h \cdot \zeta \in \mathfrak{b}^{\perp}$. The coadjoint action then gives $\zeta(\text{ad}_g^{-1}(\mathfrak{b})) = \zeta(\text{ad}_h^{-1}(\mathfrak{b})) = 0$. It follows that $gB = hB$, and so $p_1$ is injective when restricted to the fibre $p_2^{-1}(\zeta)$. It follows that there is a fibre of $p_2$ which is finite as a set. \\

Given the existence of a finite fibre of $p_2$, we have $\text{dim } S = \text{dim } p_2(S) = \text{dim } \mathcal{N}^* = \text{dim } G - r$. \\

(b) Now $\mathcal{N}^*$ has only finitely many $G$-orbits by \cite{Xu2} and \cite[Proposition 7.1]{Xu3}, so the dimension of $\mathcal{N^*}$ is equal to the dimension of at least one of these orbits. Since $\text{dim } \mathcal{N^*} = \text{dim } G - r$, some orbit in $\mathcal{N^*}$ also has dimension equal to $\text{dim } G - r$. This orbit is regular and its closure is all of $\mathcal{N^*}$, since the dimensions are equal and $\mathcal{N^*}$ is irreducible.  Since any $G$-orbit is open in its closure, by \cite[1.13, Corollary 1]{St3}, this class is open in $\mathcal{N^*}$ and thus is dense. \\

Let $R$ be the root system of $G$ and fix a subset of positive roots $R^+ \subseteq R$. Let $\alpha_i$ be a simple root, $X_{\alpha}$ the corresponding root subgroup, and set $U_i := \prod_{\alpha \in R^+, \alpha \neq \alpha_i} X_{\alpha}$. Let $T$ be the maximal torus of $G$ defined by this root system and let $P_i := T \cdot \langle X_{\alpha_i}, X_{-\alpha_i} \rangle \cdot U_i$. Since both $T$ and $\langle X_{\alpha_i}, X_{-\alpha_i} \rangle$ normalise $U_i$ by the commutation formulae in \cite[3.7]{St3}, we see that $P_i$ is a rank 1 parabolic subgroup of $G$, $U_i$ is its unipotent radical and $T \cdot \langle X_{\alpha_i}, X_{-\alpha_i} \rangle$ is a Levi subgroup of $P_i$. \\

Note that $\text{dim } T \cdot \langle X_{\alpha_i}, X_{-\alpha_i} \rangle = r+2$ and so $\text{dim } P_i - \text{dim } U_i = r+2$. \\

Parallel to the definition of the variety $S$, we set:

\begin{gather*}
S_i := \lbrace (gP_i, \zeta) \in G/P_i \times \mathfrak{g}^* \mid g \cdot \zeta \in \mathfrak{b}^{\perp}_i \rbrace
\end{gather*}

where $\mathfrak{b}^{\perp}_i = \lbrace \zeta \in \mathfrak{g}^* \mid \zeta(\text{Lie}(U_iT)) = 0 \rbrace$. Then $S_i$ is a closed and irreducible variety and, by the same argument as in part (a) of the proposition:

\begin{gather*}
\text{dim } S_i = \text{dim } G/P_i + \text{dim } U_i \\
= \text{dim } G - (r+2).
\end{gather*}

Projecting onto the second factor, we see that:

\begin{gather*}
\text{dim } p_2(S_i) \leq \text{dim } S_i = \text{dim } G - (r+2).
\end{gather*}

But an element $\zeta \in \mathcal{N}^*$ fails to be regular if and only if $G \cdot \zeta \cap \mathfrak{h}^*_{\text{reg}} = \emptyset$. By the decomposition in \cite[Section 6.4]{GKM}, this occurs precisely when the centraliser of each $\xi \in G \cdot \zeta \cap \mathfrak{h}^*$ contains some non-zero root $\alpha$ such that $\xi(\alpha^{\vee}(1)) = 0$, where $\alpha^{\vee}$ is the coroot corresponding to $\alpha$. It follows that $\zeta \in \mathcal{N}^*$ fails to be regular if and only if it lies in $p_2(S_i)$ for some $i$. Then:

\begin{gather*}
\text{dim }(\mathcal{N}^* \setminus U) = \text{sup}_i \text{ dim } p_2(S_i) \leq \text{dim } G - (r+2).
\end{gather*}
\end{proof}

\begin{lemma} \label{Kostant}
Let $r: S(\mathfrak{g}) \to S(\mathfrak{h})$ be the natural map, and $r^{\prime}$ its restriction to the graded subalgebra $S(\mathfrak{g})^G$. Suppose that $r^{\prime}$ is an isomorphism onto its image $S(\mathfrak{h})^W$ and $S(\mathfrak{h})$ is a free $S(\mathfrak{h})^W$-module. Then $S(\mathfrak{g})$ is a free $R$-module, where $R := S(\mathfrak{g}/\mathfrak{h}) \otimes S(\mathfrak{g})^G$, and hence is a free $S(\mathfrak{g})^G$-module.
\end{lemma}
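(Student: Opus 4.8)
The plan is to deduce the freeness of $S(\mathfrak{g})$ over $R := S(\mathfrak{g}/\mathfrak{h}) \otimes S(\mathfrak{g})^G$ from the corresponding statement for the Cartan subalgebra, using the identification $S(\mathfrak{g})^G \cong S(\mathfrak{h})^W$ provided by $r'$. The key structural observation is that, as a graded vector space, $S(\mathfrak{g})$ decomposes (non-canonically) as $S(\mathfrak{g}/\mathfrak{h}) \otimes S(\mathfrak{h})$: choosing a vector-space splitting $\mathfrak{g} = (\mathfrak{g}/\mathfrak{h}) \oplus \mathfrak{h}$ induces a graded algebra isomorphism $S(\mathfrak{g}) \cong S(\mathfrak{g}/\mathfrak{h}) \otimes S(\mathfrak{h})$. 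Under this identification, $R = S(\mathfrak{g}/\mathfrak{h}) \otimes r'(S(\mathfrak{g})^G)$ — but here one must be careful, since $S(\mathfrak{g})^G$ sits inside $S(\mathfrak{g})$ not inside the tensor factor $S(\mathfrak{h})$. So the cleaner route is: first show $S(\mathfrak{g})$ is free over $R$ by a filtration/graded Nakayama argument, then observe that $S(\mathfrak{g})$ being free over $R$ forces it to be free over the subring $S(\mathfrak{g})^G$ (since $R$ is itself free — indeed polynomial — over $S(\mathfrak{g})^G$, being a polynomial extension by the $\dim(\mathfrak{g}/\mathfrak{h})$ many variables from $S(\mathfrak{g}/\mathfrak{h})$, and a module free over a ring that is free over a subring is free over the subring).

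The main step is therefore the freeness of $S(\mathfrak{g})$ over $R$. Since everything in sight is non-negatively graded and connected, by the graded version of Nakayama's lemma it suffices to check that $S(\mathfrak{g})$ is \emph{flat} over $R$, or equivalently — since $R$ is a polynomial ring (hence regular, hence Cohen--Macaulay) and $S(\mathfrak{g})$ is Cohen--Macaulay — that the fibre dimension is constant, i.e.\ that $\dim_K S(\mathfrak{g}) \otimes_R K = \dim_K R' $ for the "expected" value, or most efficiently via the miracle flatness / "freeness over a polynomial subring" criterion: a finitely generated graded module over a graded polynomial ring is free iff it is Cohen--Macaulay of the right dimension, iff a homogeneous system of parameters for the module that lies in $R$ is a regular sequence. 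Concretely, $R$ is generated by $\dim \mathfrak{g}/\mathfrak{h} = \dim \mathfrak{g} - r$ variables coming from $S(\mathfrak{g}/\mathfrak{h})$ together with the $r$ algebraically independent generators $f_1, \dots, f_n$ of $S(\mathfrak{g})^G$ (these are $n-1 = r$ in number for $PGL_n$, matching). So $R$ is a polynomial ring in $\dim \mathfrak{g}$ variables, $S(\mathfrak{g})$ is a polynomial ring in $\dim \mathfrak{g}$ variables, and I must show the inclusion $R \hookrightarrow S(\mathfrak{g})$ is \emph{finite} and then invoke the fact that a finite extension of a polynomial ring by a Cohen--Macaulay ring of the same dimension is automatically free (this is exactly the criterion used in the definition of Cohen--Macaulay in \cite[Definition 2.2.12]{CG}, run in reverse).

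So the crux reduces to: (i) $R \hookrightarrow S(\mathfrak{g})$ is a finite (module-finite) ring extension, and (ii) $S(\mathfrak{g})$ is Cohen--Macaulay. For (ii), $S(\mathfrak{g})$ is a polynomial ring, hence trivially Cohen--Macaulay, so there is nothing to do. For (i), the generators of $S(\mathfrak{g}/\mathfrak{h})$ are coordinates on $\mathfrak{g}$ vanishing on $\mathfrak{h}$; together, the common zero locus of the generators of $R$ inside $\mathfrak{g}$ is $\{X \in \mathfrak{h} : f_i(X) = 0 \ \forall i\}$, and since $\eta: S(\mathfrak{g})^G \xrightarrow{\sim} S(\mathfrak{h})^W$ and the latter has $V(S(\mathfrak{h})^W_+) = \{0\}$ (the nilcone of $\mathfrak{h}$ is a point), this common zero locus is $\{0\}$, a single point; hence by the graded Nullstellensatz the extension $R \hookrightarrow S(\mathfrak{g})$ is finite. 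Then the hypothesis that $S(\mathfrak{h})$ is free over $S(\mathfrak{h})^W$ enters to guarantee that the Krull dimension / fibre count works out — more precisely, it pins down that the $r$ elements $f_1, \dots, f_r$ restrict to a regular sequence, which is what makes $R$ genuinely polynomial of dimension $\dim\mathfrak{g}$ rather than something of smaller dimension. I expect the bookkeeping around "which copy of $S(\mathfrak{g})^G$ we mean" and verifying that $R$ really is a polynomial ring (i.e.\ that the $\dim\mathfrak{g} - r$ linear coordinates and the $r$ invariants $f_i$ are jointly algebraically independent) to be the only genuinely delicate point; everything else is a standard application of miracle flatness.
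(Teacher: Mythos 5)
Your proof is correct, but it follows a genuinely different route from the paper's. The paper runs a constructive \emph{filtration} argument: it introduces the increasing filtration on $S(\mathfrak{g})$ by degree along the fibres of $\mathfrak{g}\to\mathfrak{g}/\mathfrak{h}$, identifies $\mathrm{gr}_F S(\mathfrak{g})\cong S(\mathfrak{g}/\mathfrak{h})\otimes S(\mathfrak{h})$, shows that the principal symbols of $S(\mathfrak{g})^G$ recover $S(\mathfrak{h})^W\otimes 1$, and then lifts an explicit $S(\mathfrak{h})^W$-basis $\{a_k\}$ of $S(\mathfrak{h})$ to elements $b_k\in S(\mathfrak{g})$ with $\sigma(b_k)=a_k$, which by the standard filtered-to-graded principle form a free $R$-basis. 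Your approach instead goes through \emph{miracle flatness} (Hironaka's criterion): use the graded Nullstellensatz to show that $R\hookrightarrow S(\mathfrak{g})$ is module-finite (the common zero locus of $R^+$ is $\mathfrak{j}^\perp\cap V(S^+(\mathfrak{g})^G)$, which via $r'$ is the zero locus of $S^+(\mathfrak{h})^W$ in $\mathfrak{h}^*$, namely $\{0\}$), deduce $\dim R=\dim S(\mathfrak{g})$, and then invoke the fact that a module-finite extension of a regular graded ring by a Cohen--Macaulay ring of the same dimension is free.

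Two small points worth making precise. First, the algebraic independence you flag as the ``delicate point'' is actually automatic once finiteness is established: $R$ is a graded subdomain of $S(\mathfrak{g})$ of Krull dimension $\dim\mathfrak{g}$ (by finiteness) generated by $\dim\mathfrak{g}$ homogeneous elements, and any nontrivial relation among the generators would drop the dimension — so $R$ is forced to be polynomial. Second, the step $V(S^+(\mathfrak{h})^W)=\{0\}$ uses that $S(\mathfrak{h})$ is integral over $S(\mathfrak{h})^W$, which holds in any characteristic since $W$ is finite; you should say this rather than attribute it to the freeness hypothesis. The freeness hypothesis enters only through Theorem \ref{freeness polynomial}, which converts it into the polynomiality (hence regularity) of $S(\mathfrak{h})^W\cong S(\mathfrak{g})^G$, the input that miracle flatness needs. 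With those clarifications your argument is complete; it is more machinery-heavy than the paper's but shorter once the commutative algebra is granted, whereas the paper's filtration proof is longer but yields an explicit free basis.
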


\begin{proof}
The argument is similar to that which is set out in \cite[2.2.12]{CG} and the following discussion. Consider the projection map $\mathfrak{g} \to \mathfrak{g}/\mathfrak{h}$. This makes $\mathfrak{g}$ a vector bundle over $\mathfrak{g}/\mathfrak{h}$, and defines a natural increasing filtration on $S(\mathfrak{g})$ via:

\begin{gather*}
F_pS(\mathfrak{g}) = \lbrace P \in S(\mathfrak{g}) \mid P \text{ has degree } \leq p \text{ along the fibers} \rbrace.
\end{gather*}

Let $\text{gr}_F(S(\mathfrak{g}))$ denote the associated graded ring corresponding to this filtration, and set $S(\mathfrak{g})(p)$ to denote the $p$-th graded component. Clearly $S(\mathfrak{g})(0) = S(\mathfrak{g}/\mathfrak{h})$, and each graded component is an infinite-dimensional free $S(\mathfrak{g}/\mathfrak{h})$-module. There is a $K$-algebra isomorphism:

\begin{gather*}
S(\mathfrak{g})(p) \cong S(\mathfrak{g}/\mathfrak{h}) \otimes_K S^p(\mathfrak{h}),
\end{gather*}

where $S^p(\mathfrak{h})$ denotes the space of degree $p$ homogeneous polynomials on $\mathfrak{h}$. \\

Let $\sigma_p: F_pS(\mathfrak{g}) \to S(\mathfrak{g})(p)$ be the principal symbol map. Suppose $f \in F_pS(\mathfrak{g})$ is a homogeneous degree $p$ polynomial whose restriction $r(f)$ to $\mathfrak{h}$ is non-zero. Then $\sigma_p(f)$ equals the image of the element $1 \otimes_k r(f)$ under the above isomorphism, and so is non-zero in $S(\mathfrak{g})(p)$. \\

To see this, choose a vector subspace $\mathfrak{j}$ of $\mathfrak{g}$ such that $\mathfrak{g} = \mathfrak{h} \oplus \mathfrak{j}$. This yields a graded algebra isomorphism $S(\mathfrak{g}) = S(\mathfrak{h}) \otimes S(\mathfrak{j})$, and so one writes $F_pS(\mathfrak{g}) = \sum_{i \leq p} S^i(\mathfrak{h}) \otimes S^{p-i}(\mathfrak{j})$. Hence $f \in F_pS(\mathfrak{g})$ has the form:

\begin{gather*}
f = e_p \otimes 1 + \sum_{i \leq p} e_i \otimes w_{p-i},
\end{gather*}

where $e_i \in S^i(\mathfrak{h})$ and $w_{p-i} \in S^{p-i}(\mathfrak{j})$. Hence $r(f) = e_p$ and $\sigma_p(f) = e_p \otimes 1$, as required. \\

Given this claim, consider the filtration $F^pS(\mathfrak{g})^G$ in $S(\mathfrak{g})$. For any homogeneous element $f \in S(\mathfrak{g})^G$, its symbol $\sigma_p(f)$ coincides with $r(f) \in S(\mathfrak{h}) \subseteq \text{gr}_{\Phi} (S(\mathfrak{g}))$. Hence the subalgebra $\sigma_{\Phi}(f) \subseteq \text{gr}_{\Phi} (S(\mathfrak{g}))$ coincides with $r(S(\mathfrak{g})^G) = S(\mathfrak{h})^W$. \\

Let $\lbrace a_k \rbrace$ be a free basis for the $S(\mathfrak{h})^W$-module $S(\mathfrak{h})$, and fix $b_k \in S(\mathfrak{g})$ with $r(b_k) = a_k$. Then $\sigma_p(b_k) = a_k$. The $a_k$ form a free basis of the $\text{gr}_pR$-module $\text{gr}_p (S(\mathfrak{g})) = S(\mathfrak{h}) \otimes S(\mathfrak{g}/\mathfrak{h})$, via tensoring on the right and applying the second part of the claim. It follows that the $\lbrace b_k \rbrace$ form a free basis of the $R$-module $S(\mathfrak{g})$.
\end{proof}

\begin{thm}\label{main theorem 1}
Let $G = PGL_n$ and suppose $p|n$. Then the dual nilpotent cone $\mathcal{N}^* \subseteq \mathfrak{g}^*$ is a normal variety.
\end{thm}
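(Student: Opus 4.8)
The plan is to apply Theorem \ref{important lemma} with $X = \mathcal{N}^*$ and $U$ the locus of regular elements of $\mathcal{N}^*$. Thus I must verify three things: that $\mathcal{N}^*$ is an irreducible affine Cohen--Macaulay scheme, that $U$ is an open normal subscheme, and that $\dim(\mathcal{N}^* \setminus U) \leq \dim \mathcal{N}^* - 2$. The irreducibility, affineness and codimension estimate are precisely the content of Proposition \ref{codimension prop}, whose hypothesis is satisfied here because $G = PGL_n$ is of type $A$, so $p$ is automatically nonspecial; that proposition also identifies $U$ with a single coadjoint orbit which is open and dense in $\mathcal{N}^*$, with $\dim(\mathcal{N}^*\setminus U) \leq \dim G - (r+2) = \dim\mathcal{N}^* - 2$.

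The substance of the argument is showing that $\mathcal{N}^*$ is Cohen--Macaulay. Since $G$ is simple and not of type $B$, Theorem \ref{KW} provides an isomorphism $\eta \colon S(\mathfrak{g})^G \xrightarrow{\sim} S(\mathfrak{h})^W$, while Theorem \ref{polynomial theorem} and Theorem \ref{freeness polynomial} show that $S(\mathfrak{h})^W$ is a polynomial ring and $S(\mathfrak{h})$ is a free $S(\mathfrak{h})^W$-module. Lemma \ref{Kostant} then gives that $S(\mathfrak{g})$ is a free module over $R := S(\mathfrak{g}/\mathfrak{h}) \otimes_K S(\mathfrak{g})^G$. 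As $\mathcal{N}^* = V(S^+(\mathfrak{g})^G)$, we have $K[\mathcal{N}^*] = S(\mathfrak{g}) / S^+(\mathfrak{g})^G S(\mathfrak{g}) = S(\mathfrak{g}) \otimes_R \bigl(R / S^+(\mathfrak{g})^G R\bigr) = S(\mathfrak{g}) \otimes_R S(\mathfrak{g}/\mathfrak{h})$, which is therefore a free $S(\mathfrak{g}/\mathfrak{h})$-module; it has finite rank, since $K[\mathcal{N}^*]$ is a finitely generated $K$-algebra and $\dim \mathcal{N}^* = \dim G - r = \dim S(\mathfrak{g}/\mathfrak{h})$ by Proposition \ref{codimension prop}(a). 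Because $S(\mathfrak{g}/\mathfrak{h})$ is a polynomial ring, i.e.\ the coordinate ring of the smooth affine space $\mathbb{A}^{\dim \mathfrak{g} - r}$, the definition of Cohen--Macaulay applies directly and $K[\mathcal{N}^*]$ is a Cohen--Macaulay ring, so $\mathcal{N}^*$ is a Cohen--Macaulay scheme. (This also forces $K[\mathcal{N}^*]$ to be reduced, a finite free module over a regular ring having no embedded primes while the regular orbit $U$ is reduced.)

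It remains to check that $U$ is normal. By Proposition \ref{codimension prop}(b) the open set $U$ is a single $G$-orbit, hence a homogeneous space $G/G_\zeta$ for the smooth group $G$; since a reduced scheme of finite type over the algebraically closed, hence perfect, field $K$ is smooth on a dense open set, and all local rings of a homogeneous space are isomorphic, $U$ is smooth, and in particular normal. All hypotheses of Theorem \ref{important lemma} now hold, and we conclude that $\mathcal{N}^*$ is normal. One may equivalently invoke Lemma \ref{function field isomorphism}: the restriction map $K[\mathcal{N}^*] \to \mathcal{O}(U)$ is an isomorphism onto the normal ring $\mathcal{O}(U)$, whence $K[\mathcal{N}^*]$ is normal.

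The main obstacle is the Cohen--Macaulay step: one must correctly assemble the freeness statements (Theorems \ref{KW}, \ref{polynomial theorem}, \ref{freeness polynomial} and Lemma \ref{Kostant}) and carry out the base change $S(\mathfrak{g}) \otimes_R S(\mathfrak{g}/\mathfrak{h})$, with the delicate point being that the resulting free $S(\mathfrak{g}/\mathfrak{h})$-module has finite rank --- this is exactly where the dimension computation of Proposition \ref{codimension prop}(a) enters. The normality of $U$ and the codimension bound are comparatively routine consequences of the cited results.
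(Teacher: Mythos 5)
Your proof follows the same strategy as the paper: establish that $\mathcal{N}^*$ is Cohen--Macaulay via Lemma \ref{Kostant}, get irreducibility and the codimension bound from Proposition \ref{codimension prop}, and conclude normality by Theorem \ref{important lemma}. You fill in two steps the paper states without detail — the base-change computation $S(\mathfrak{g}) \otimes_R S(\mathfrak{g}/\mathfrak{h}) \cong K[\mathcal{N}^*]$ with the finite-rank verification, and the normality of the regular orbit $U$ — but these are elaborations of the same argument rather than a different route, so the proposal is correct and matches the paper's proof.
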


\begin{proof}
Recall that $\mathcal{N}^*$ is an affine variety with defining ideal $J := V(S^+(\mathfrak{g})^G))$. It follows that its algebra of global functions $\mathcal{O}(\mathcal{N}^*) = S(\mathfrak{g})/J$. Consider $Y := \mathfrak{g}/\mathfrak{h}$ as an affine variety. Then Lemma $\ref{Kostant}$ implies that $\mathcal{O}(\mathcal{N}^*)$ is a free finitely generated module over the polynomial algebra $S(Y)$. Hence $\mathcal{N}^*$ is a Cohen-Macaulay variety. \\

By Proposition \ref{codimension prop}, $\mathcal{N}^*$ is a closed irreducible subvariety of $\mathfrak{g}^*$, and the complement of the set of regular elements $U$ in $\mathcal{N}^*$ has codimension $\geq 2$. Hence all conditions in the statement of Theorem \ref{important lemma} are satisfied, and so $\mathcal{N}^*$ is normal.
\end{proof}

$\textbf{Proof of Theorem A:}$ This is immediate from Theorem \ref{main theorem 1}. \\

We conclude this section with an application of this result, which will be used in later sections. \\

\begin{cor} \label{global sections isomorphism}
We have an isomorphism $\mu^*: \mathcal{O}(\mathcal{N}^*) \to \mathcal{O}(T^*\mathcal{B})$.
\end{cor}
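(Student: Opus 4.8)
The plan is to show that the comorphism $\mu^{*}$ is both injective and surjective, leaning on the normality of $\mathcal{N}^{*}$ from Theorem \ref{main theorem 1} and on the fact that the Springer map is an isomorphism over the regular locus. First I would use the $G$-equivariant isomorphism $\kappa: \mathfrak{g} \to \mathfrak{g}^{*}$: it restricts to an isomorphism $\mathcal{N} \xrightarrow{\sim} \mathcal{N}^{*}$, hence induces an isomorphism $\kappa^{*}: \mathcal{O}(\mathcal{N}^{*}) \xrightarrow{\sim} \mathcal{O}(\mathcal{N})$, so it suffices to prove that $\mu^{*}: \mathcal{O}(\mathcal{N}) \to \mathcal{O}(T^{*}\mathcal{B})$ is an isomorphism, where $\mu: T^{*}\mathcal{B} \cong \widetilde{\mathcal{N}} \to \mathcal{N}$ is the Springer map under the identification of Lemma \ref{enhanced nilpotent cone cotangent}. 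Injectivity is immediate: $\mu$ is dominant by Lemma \ref{birationality} and $\mathcal{N}$ is reduced and irreducible, so pullback of regular functions along $\mu$ is injective.

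For surjectivity, let $GZ$ be the open orbit of regular nilpotent elements. By Lemma \ref{smooth elements} it equals the smooth locus $\mathcal{N}_{s}$, and by Lemma \ref{density} its preimage $\mu^{-1}(GZ)$ is dense in $\widetilde{\mathcal{N}}$; by Lemma \ref{birationality}, $\mu$ restricts to an isomorphism $\mu^{-1}(GZ) \xrightarrow{\sim} GZ$. Given $f \in \mathcal{O}(T^{*}\mathcal{B})$, transport $f|_{\mu^{-1}(GZ)}$ across this isomorphism to obtain a regular function $g$ on the open subset $GZ$ of the affine variety $\mathcal{N}$. Since $\kappa$ carries the regular locus of $\mathcal{N}^{*}$ onto $GZ$, Proposition \ref{codimension prop}(b) shows that $\mathcal{N} \setminus GZ$ has codimension $\geq 2$, and $\mathcal{N}$ is normal by Theorem \ref{main theorem 1}; hence Lemma \ref{stackexchange lemma} extends $g$ to a regular function $\tilde{g} \in \mathcal{O}(\mathcal{N})$. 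Then $\mu^{*}\tilde{g}$ and $f$ are regular functions on $T^{*}\mathcal{B}$ agreeing on the dense open set $\mu^{-1}(GZ)$, and $T^{*}\mathcal{B}$ is a variety, so $\mu^{*}\tilde{g} = f$. This gives surjectivity, and precomposing the resulting isomorphism with $\kappa^{*}$ yields the stated isomorphism $\mathcal{O}(\mathcal{N}^{*}) \to \mathcal{O}(T^{*}\mathcal{B})$.

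I expect the only real friction to be bookkeeping rather than a deep obstacle: one must check that the notion of \emph{regular} element of $\mathcal{N}^{*}$ used in Proposition \ref{codimension prop} matches, under the $G$-equivariant $\kappa$, the \emph{regular nilpotent} elements of $\mathcal{N}$ appearing in Lemmas \ref{smooth elements} and \ref{birationality}, so that the open set on which $\mu$ is an isomorphism is exactly the one whose complement has codimension $\geq 2$; this uses that a $G$-equivariant isomorphism of varieties carries the unique open dense orbit onto the unique open dense orbit. As a cross-check, and an alternative that bypasses the explicit extension, note that $\mu$ is proper by \cite[Lemma 6.10(1)]{JN}, so $\mu_{*}\mathcal{O}_{T^{*}\mathcal{B}}$ is a coherent $\mathcal{O}_{\mathcal{N}}$-algebra; being finite over $\mathcal{O}_{\mathcal{N}}$ it is integral over it, and being birational it is contained in the function field of $\mathcal{N}$, so normality of $\mathcal{N}$ forces $\mu_{*}\mathcal{O}_{T^{*}\mathcal{B}} = \mathcal{O}_{\mathcal{N}}$; taking global sections gives $\mathcal{O}(T^{*}\mathcal{B}) = \mathcal{O}(\mathcal{N}) \cong \mathcal{O}(\mathcal{N}^{*})$.
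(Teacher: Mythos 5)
Your proposal is correct, and you actually give two arguments: the ``cross-check'' at the end is essentially the paper's own proof (birationality gives a function-field identification, properness of $\mu$ gives finiteness of $\mathcal{O}(T^*\mathcal{B})$ over $\mathcal{O}(\mathcal{N}^*)$, and then integral closure of the normal ring $\mathcal{O}(\mathcal{N}^*)$ forces equality). Your main argument takes a genuinely different and somewhat more hands-on route: rather than invoking finiteness and integral closure, you directly build the inverse of $\mu^*$ by restricting a function $f \in \mathcal{O}(T^*\mathcal{B})$ to $\mu^{-1}(GZ)$, transporting it across the isomorphism $\mu^{-1}(GZ) \xrightarrow{\sim} GZ$ of Lemma \ref{birationality}, and then extending over the codimension-$\geq 2$ complement by the Hartogs-type Lemma \ref{stackexchange lemma}, using normality of $\mathcal{N}$. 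This buys a more elementary and self-contained surjectivity proof that never needs coherence of $\tau_*\mathcal{O}_{T^*\mathcal{B}}$ or the determinant trick; the trade-off is that it leans more heavily on the codimension-$\geq 2$ estimate of Proposition \ref{codimension prop}(b), whereas the paper's route only uses that estimate indirectly through Theorem \ref{main theorem 1} to get normality. Your bookkeeping concern about matching the two notions of ``regular'' under $\kappa$ is legitimate but mild, as you observe: a $G$-equivariant isomorphism carries the unique open dense orbit to the unique open dense orbit, and the codimension bound transports likewise. One small point: invoking Lemma \ref{smooth elements} to identify $GZ$ with $\mathcal{N}_s$ is unnecessary for your surjectivity argument, since what you actually use is just that $GZ$ is open dense with small complement and that $\mu$ is an isomorphism over it.
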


\begin{proof}
The map $\mu: T^*\mathcal{B} \to \mathcal{N}$ is a resolution of singularities by Theorem \ref{Springer resolution}. Let $\tau: T^*\mathcal{B} \to \mathcal{N}^*$ be the composition of $\mu$ with the $G$-equivariant isomorphism $\kappa: \mathcal{N} \to \mathcal{N}^*$ from \cite[Section 6.5]{JN}. This induces an isomorphism $\tau^s: \tau^{-1}((\mathcal{N^*})^s) \to (\mathcal{N^*})^s$ on the smooth points. These are non-empty open subsets of $T^*\mathcal{B}$ and $\mathcal{N^*}$ respectively, and so $T^*\mathcal{B}$ and $\mathcal{N^*}$ are birationally equivalent. \\

Let $\mathcal{Q}(A)$ denote the field of fractions of an integral domain $A$. By \cite[I.4.5]{Ha}, $\mu$ induces an isomorphism $\mathcal{Q}(\mathcal{O}(\mathcal{N^*})) \to \mathcal{Q}(\mathcal{O}(T^*\mathcal{B}))$, and so $\mathcal{O}(T^*\mathcal{B})$ can be considered as a subring of $\mathcal{Q}(\mathcal{O}(\mathcal{N^*}))$. \\

Since the map $T^*\mathcal{B} \to \mathcal{N^*}$ is surjective, and $\mathcal{O}(T^*\mathcal{B})$, $\mathcal{O}(\mathcal{N^*})$ are integral domains, there is an inclusion $\mathcal{O}(\mathcal{N^*}) \to \mathcal{O}(T^*\mathcal{B})$. The map $\tau$ is proper, and so the direct image sheaf $\tau_*\mathcal{O}_{T^*\mathcal{B}}$ is a coherent $\mathcal{O}_{\mathcal{N^*}}$-module. In particular, taking global sections, we have that $\Gamma(\mathcal{N^*}, \tau_*\mathcal{O}_{T^*\mathcal{B}})$ is a finitely generated $\mathcal{O}(\mathcal{N^*})$-module. By definition, $\Gamma(\mathcal{N^*}, \tau_*\mathcal{O}_{T^*\mathcal{B}}) = \mathcal{O}(T^*\mathcal{B})$, so $\mathcal{O}(T^*\mathcal{B})$ is a finitely generated $\mathcal{O}(\mathcal{N^*})$-module. \\

The variety $\mathcal{N}^*$ is normal, and so $\mathcal{O}(\mathcal{N}^*)$ is an integrally closed domain. Let $b \in \mathcal{O}(T^*\mathcal{B})$. Then clearly $\mathcal{O}(T^*\mathcal{B})b \subseteq \mathcal{O}(T^*\mathcal{B})$, and hence $b$ is integral over $\mathcal{O}(\mathcal{N}^*)$. Hence, by integral closure, $b \in \mathcal{N}^*$ and there is an isomorphism $\mathcal{O}(\mathcal{N}^*) \to \mathcal{O}(T^*\mathcal{B})$.
\end{proof}

\subsection{Analogous results when $G$ is not of type A}

The restriction that $G = PGL_n$, $p|n$ plays a role in only a few places in the argument that $\mathcal{N}^*$ is a normal variety. In this section, we indicate some of the issues that arise when we replace $PGL_n$ by a more general simple algebraic group of adjoint type. \\

Theorem \ref{polynomial theorem} demonstrated that, in case $G = PGL_n$, $p|n$, the Weyl group invariants $S(\mathfrak{h})^W$ is a polynomial ring. This result is usually false in bad characteristic. In case the $W$-action on $\mathfrak{h}$ is irreducible,\cite[Theorem 3]{Bro} gives a full classification of the types in which this result holds, drawing on \cite[Theorem 7.2]{KM}. \\

\begin{prop}\label{fulton harris}
Suppose the pair (Dynkin diagram of $G$, $p$) lies in the following list: \\

(a) ($E_7$, 3), \\
(b) ($E_8$, 2), \\
(c) ($E_8$, 3), \\
(d) ($E_8$, 5), \\
(e) ($F_4$, 3), \\
(f) ($G_2$, 2). \\

Then the $W$-action on $\mathfrak{h}$ is irreducible.
\end{prop}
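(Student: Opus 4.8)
The plan is to realise $\mathfrak{h}$ as the reduction mod $p$ of the integral reflection representation of $W$ and then argue that this reduction has no proper submodule. Write $\mathfrak{h} = X_*(T) \otimes_{\mathbb{Z}} K$, on which $W$ acts by the reflections $s_\alpha(\lambda) = \lambda - \langle \alpha, \lambda\rangle\, \alpha^\vee$ for $\alpha$ in the root system $R$, where $\langle -,-\rangle \colon X^*(T) \times X_*(T) \to \mathbb{Z}$ is the canonical pairing extended $K$-linearly. The key, characteristic-free observation is a submodule criterion: if $M \subseteq \mathfrak{h}$ is a $W$-submodule, then for every $m \in M$ and $\alpha \in R$ we have $s_\alpha(m) - m = -\langle \alpha, m\rangle\, \alpha^\vee \in M$, so for each $\alpha$ either $\alpha^\vee \in M$ or $\langle \alpha, m\rangle = 0$ for all $m \in M$. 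Setting $R_M := \lbrace \alpha \in R \mid \alpha^\vee \in M \rbrace$, the subset $R_M$ is $W$-stable (since $(w\alpha)^\vee = w\alpha^\vee$), and every root outside $R_M$ vanishes identically on $M$.

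Next I would run the resulting dichotomy. Since $W$ acts transitively on the roots of a fixed length, the $W$-stable subsets of $R$ are $\emptyset$ and $R$ in the simply-laced cases ($E_7$, $E_8$), and $\emptyset$, the short roots, the long roots, $R$ in the remaining cases ($F_4$, $G_2$). If $R_M = \emptyset$ then $M$ lies in the common kernel of all roots; but $G$ is of adjoint type, so the simple roots form a $\mathbb{Z}$-basis of $X^*(T)$ and already separate points of $\mathfrak{h}$, whence $M = 0$. If $R_M = R$ then $M$ contains the span $C$ of all coroots, which is $Q^\vee \otimes K$ inside $P^\vee \otimes K = \mathfrak{h}$; this is all of $\mathfrak{h}$ exactly when $p \nmid \lvert P^\vee/Q^\vee\rvert = \lvert P/Q\rvert$. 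Since $\lvert P/Q \rvert = 1$ for $E_8$, $F_4$, $G_2$ and $\lvert P/Q\rvert = 2$ for $E_7$, and the primes in play are $3$ (for $E_7$) and $2,3,5$ (for $E_8$), $3$ (for $F_4$), $2$ (for $G_2$), in every case on the list $p$ is coprime to $\lvert P/Q\rvert$, so $C = \mathfrak{h}$ and $M = \mathfrak{h}$. In the simply-laced cases this already finishes the argument.

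It remains to treat $R_M = \lbrace\text{short roots}\rbrace$ or $\lbrace\text{long roots}\rbrace$ for $F_4$ with $p = 3$ and $G_2$ with $p = 2$; here one must show that the coroots $\alpha^\vee$ coming from a single $W$-orbit of roots already span $\mathfrak{h}$ over $K$, forcing $M = \mathfrak{h}$. For $F_4$ I would check directly in the standard realisation (long roots $\pm e_i \pm e_j$, short roots $\pm e_i$ and $\tfrac12(\pm e_1 \pm e_2 \pm e_3 \pm e_4)$) that the coroots of the long roots, and separately those of the short roots, generate a sublattice of $X_*(T)$ of index a power of $2$, hence span $\mathfrak{h}$ after reduction mod $3$; the essential point is that $p = 3 \ne 2$. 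For $G_2$ with $p = 2$ the cleanest route bypasses the coroot bookkeeping: the image of $W(G_2)$ in $GL(\mathfrak{h}) = GL_2(K)$ has the central element $-1$ acting trivially (as $p = 2$), so this image has order dividing $6$; being visibly of order at least $6$ it coincides with the image of $GL_2(\mathbb{F}_2) \cong S_3$, and the two-dimensional reflection representation of $S_3$ is absolutely irreducible in characteristic $2$. The main obstacle is precisely this last paragraph: verifying, in the two cases with two root lengths, that a single $W$-orbit of coroots spans $\mathfrak{h}$ modulo $p$. This is also what explains why $(F_4,2)$ and $(G_2,3)$ are absent from the list, as there the relevant index becomes divisible by $p$ and the reflection representation genuinely decomposes.
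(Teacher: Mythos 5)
Your argument is correct but takes a genuinely different route from the paper. The paper's proof is a three-line citation chain: it invokes Jantzen for a $G$-equivariant isomorphism $\mathfrak{h} \to \mathfrak{h}^*$, invokes Humphreys for simplicity of $\mathfrak{g}$ in the listed cases, and then says that the argument of Fulton--Harris, Proposition 14.31 (which deduces irreducibility of the reflection representation from simplicity of the Lie algebra) goes through. You instead work entirely inside the root datum: you isolate the same reflection dichotomy (for a $W$-submodule $M$ and each root $\alpha$, either $\alpha^\vee \in M$ or $\alpha\vert_M = 0$) that underlies the Fulton--Harris argument, but then you resolve it via the $W$-orbit structure of $R$ together with explicit coroot-lattice arithmetic, rather than via the decomposition of $\mathfrak{g}$. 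This buys you two things. First, the argument is self-contained and stays at the level of the root datum, so you sidestep the question of whether the characteristic-zero Fulton--Harris proof (which exploits the Killing-form identification $\mathfrak{h} \cong \mathfrak{h}^*$ and the root-space decomposition of $\mathfrak{g}$) transfers cleanly to characteristic $p$, and you never need to know that $\mathfrak{g}$ is simple, which is a separately-verified char-$p$ fact in the paper. Second, your index computations give a sharp explanation of why $(F_4,2)$ and $(G_2,3)$ are absent from the list, information the paper's citation does not surface. The trade-off is the case-by-case bookkeeping with coroot sublattices that the paper delegates to its references. One small simplification: your $GL_2(\mathbb{F}_2) \cong S_3$ argument for $(G_2, 2)$ is fine, but the uniform lattice computation works there too, since the coroots in a single $W$-orbit of $G_2$ generate a sublattice of $X_*(T)$ of index $1$ or $3$, both invertible mod $2$.
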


\begin{proof}
In all of these cases, the argument in \cite[Section 6.5]{JN} demonstrates that there is a $G$-equivariant bijection $\kappa: \mathfrak{g} \to \mathfrak{g}^*$, which restricts to a $G$-equivariant bijection $\mathfrak{h} \to \mathfrak{h}^*$. Furthermore, the classification in \cite[Section 0.13]{Hu2} demonstrates that $\mathfrak{g}$ is simple. Given these two statements, we may apply the same proof as that given in \cite[Proposition 14.31]{FH} to obtain the result.
\end{proof}

\begin{thm}\label{G2 polynomial}
Suppose $G$ is of type $G_2$ and $p = 2$. Then the invariant ring $S(\mathfrak{h})^W$ is polynomial.
\end{thm}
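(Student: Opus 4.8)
The plan is to imitate the strategy of Theorem \ref{polynomial theorem}, but now working with the $2$-dimensional reflection representation of the Weyl group $W(G_2) \cong D_6$ (the dihedral group of order $12$) over a field of characteristic $2$. First I would set up the representation concretely: by Proposition \ref{fulton harris}, when $p = 2$ the $W$-action on $\mathfrak{h}$ is an irreducible $2$-dimensional representation, so $S(\mathfrak{h}) \cong K[x,y]$ and $W$ acts by graded $K$-algebra automorphisms. The key structural fact I would invoke is that $D_6 \cong S_3 \times \mathbb{Z}/2$, but in characteristic $2$ the central $\mathbb{Z}/2$ is generated by $-1$, which acts trivially on $\mathfrak{h}$ (since $-1 = 1$), so the effective action on $\mathfrak{h}$ is that of $S_3$ acting through its $2$-dimensional irreducible representation. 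Thus $S(\mathfrak{h})^{W(G_2)} = S(\mathfrak{h})^{S_3}$, and I am reduced to computing the invariants of $S_3$ acting on $K[x,y]$ in characteristic $2$.

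Next I would carry out the $S_3$-invariant computation. The $2$-dimensional representation of $S_3$ can be realised (as in the proof of Theorem \ref{polynomial theorem}) as the quotient $V/U$ of the permutation module $V = Ke_1 \oplus Ke_2 \oplus Ke_3$ by the line $U = K(e_1 + e_2 + e_3)$; note that in characteristic $2$ (indeed any characteristic not dividing $3$) this is the standard reflection representation. Applying Proposition \ref{KM}-style reasoning — specifically the same combination of \cite[Proposition 4.1]{Na} and \cite[Proposition 5.1]{KM} used in the proof of Theorem \ref{polynomial theorem} — the invariant ring $S(V/U)^{S_3}$ is a polynomial ring generated by the images of the elementary symmetric functions $s_2$ and $s_3$ under $S(V) \to S(V/U)$. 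Alternatively, one can exhibit the two generators directly: $S(\mathfrak{h})^{S_3}$ is generated by a degree $2$ and a degree $3$ element, and one checks via a Jacobian criterion (valid here because the degrees $2, 3$ are coprime to each other but $2 \cdot 3 = 6 = |W(G_2)|$, so one must be slightly careful — the product of the degrees equals the group order, which is exactly the Shephard–Todd–Chevalley numerical condition) that these are algebraically independent. Since we already know from Proposition \ref{fulton harris} that the action is a genuine reflection representation in char $2$ and the relevant references handle exactly this quotient situation, invoking Theorem \ref{polynomial theorem}'s machinery verbatim is the cleanest route.

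The main obstacle, and the step requiring genuine care, is the identification $W(G_2)\text{-invariants} = S_3\text{-invariants}$ on $\mathfrak{h}$ in characteristic $2$: one must verify that the long element $w_0 = -1 \in W(G_2)$ really does act as the identity on $\mathfrak{h}$ in characteristic $2$ and not merely on $\mathfrak{h}^*$, and that the quotient $D_6 / \langle w_0 \rangle \cong S_3$ acts on $\mathfrak{h}$ through an honest faithful reflection representation (faithfulness in char $2$ being the subtle point, exactly as in the faithfulness argument inside the proof of Theorem \ref{polynomial theorem}). Once this reduction is secure, the remainder is a direct application of the results already cited for type $A_2$, since $\mathfrak{h}$ for $G_2$ in char $2$ and $\mathfrak{h}$ for $PGL_3$ in char $3$ are both the $2$-dimensional quotient reflection representation of $S_3$ — though over different fields, so I would phrase the final step in terms of the general statements of \cite[Proposition 4.1]{Na} and \cite[Theorem 7.2]{KM} rather than literally reusing the $p = 3$ case.
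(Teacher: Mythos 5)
Your approach is correct and takes a genuinely different, more explicit route than the paper: the paper simply cites the computation in \cite[Theorem 7.2]{KM}, whereas you reduce the question to an $S_3$-invariant calculation. The key observation --- that $w_0 = -1 \in W(G_2)$ acts trivially on $\mathfrak{h}$ when $p = 2$, so the faithful image of $W(G_2) \cong S_3 \times \mathbb{Z}/2$ in $GL(\mathfrak{h})$ is $W(G_2)/\langle w_0 \rangle \cong S_3$ --- is sound, and combined with the irreducibility supplied by Proposition \ref{fulton harris} it identifies $\mathfrak{h}$ with the unique two-dimensional irreducible $S_3$-module in characteristic $2$, namely the reflection representation. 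Two small corrections on the execution. First, because $p = 2$ does not divide $n = 3$, the permutation module actually splits as $V = U \oplus X$ with $X$ the sum-zero subspace; the $p \mid n$ quotient machinery of \cite[Proposition 4.1]{Na} and \cite[Proposition 5.1]{KM} invoked in Theorem \ref{polynomial theorem} is therefore not the natural tool here, and the cleaner statement is $S(V)^{S_3} = S(U) \otimes S(X)^{S_3} = K[s_1] \otimes S(X)^{S_3}$, from which it follows directly that $S(X)^{S_3}$ is freely generated by the images of $s_2, s_3$. Second, the numerical sanity check ``$2 \cdot 3 = 6 = |W(G_2)|$'' contains a slip: $|W(G_2)| = 12$, and the correct identity is $2 \cdot 3 = 6 = |S_3| = |W(G_2)/\langle w_0 \rangle|$, the order of the group acting faithfully on $\mathfrak{h}$, which is what the Chevalley--Shephard--Todd numerical criterion actually tracks. (In characteristic zero the basic $W(G_2)$-invariants have degrees $2$ and $6$, with product $12$; the drop from degree $6$ to degree $3$ is precisely the collapse of $w_0$ in characteristic $2$.)
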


\begin{proof}
This follows from the calculations in \cite[Theorem 7.2]{KM}.
\end{proof}

In case $G$ is of type $G_2$ and $p = 2$, we may apply the same argument as for $G = PGL_n$ to obtain the following result. \\

\begin{thm}\label{G2 case}
Let $(G,p) = (G_2, 2)$. Then the dual nilpotent cone $\mathcal{N}^* \subseteq \mathfrak{g}^*$ is a normal variety.
\end{thm}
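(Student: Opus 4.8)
The plan is to mirror, essentially verbatim, the argument used to prove Theorem \ref{main theorem 1} for $PGL_n$, checking that each ingredient remains available in the case $(G,p) = (G_2, 2)$. The statement of Theorem \ref{G2 case} asserts exactly the same conclusion as Theorem \ref{main theorem 1}, and the structure of that proof is modular: it rests on (i) Cohen-Macaulayness of $\mathcal{O}(\mathcal{N}^*)$ via freeness over a polynomial subalgebra, and (ii) irreducibility of $\mathcal{N}^*$ together with a codimension $\geq 2$ estimate on the non-regular locus. So the proof I would write is: first note that $G_2$ has no type-$A$ component and $p=2$ is bad but, crucially, \textbf{nonspecial} for $G_2$ (by Definition \ref{nonspecial defn}, only $(G_2,3)$ is special), so Proposition \ref{codimension prop} applies directly and gives that $\mathcal{N}^* \subseteq \mathfrak{g}^*$ is a closed irreducible subvariety of $\mathfrak{g}^*$ whose regular locus $U$ is open with complement of codimension $\geq 2$.

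Next I would establish the Cohen-Macaulay property. By Theorem \ref{G2 polynomial}, $S(\mathfrak{h})^W$ is a polynomial ring, so Theorem \ref{freeness polynomial} shows $S(\mathfrak{h})$ is a free $S(\mathfrak{h})^W$-module. To invoke Lemma \ref{Kostant} I need the restriction map $r': S(\mathfrak{g})^G \to S(\mathfrak{h})^W$ to be an isomorphism; this is exactly Theorem \ref{KW}, whose hypothesis $(G,p) \neq (B,2)$ is obviously satisfied for $G_2$. Granting that, Lemma \ref{Kostant} yields that $S(\mathfrak{g})$ is a free module over $R = S(\mathfrak{g}/\mathfrak{h}) \otimes S(\mathfrak{g})^G$, and hence $\mathcal{O}(\mathcal{N}^*) = S(\mathfrak{g})/J$ (with $J = S^+(\mathfrak{g})^G S(\mathfrak{g})$) is a finitely generated free module over the polynomial algebra $S(\mathfrak{g}/\mathfrak{h})$, so $\mathcal{N}^*$ is Cohen-Macaulay. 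Finally, combining irreducibility, the Cohen-Macaulay property, and the codimension bound, Theorem \ref{important lemma} (applied with $X = \mathcal{N}^*$ and the open subscheme $U$ of regular elements, which is normal since it is a single orbit, hence smooth) forces $\mathcal{N}^*$ to be normal.

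The place where I expect to need the most care is confirming that the two \emph{external} inputs behave for $G_2$ in characteristic $2$ the way I want: specifically that $p = 2$ is genuinely nonspecial for $G_2$ (so that the regular nilpotent locus in $\mathcal{N}^*$ is non-empty, via \cite[Section 6.4]{GKM} and \cite[Section 6.4]{GKM} as used in Proposition \ref{codimension prop}), and that Proposition \ref{codimension prop} itself was stated for nonspecial $p$ and a general $G$ rather than only for $PGL_n$ — which, reading the proof, it was. If those hold, no new calculation is required and the proof is purely a matter of assembling the cited lemmas. The one subtlety worth flagging explicitly is that unlike the $PGL_n$ case we do \emph{not} have a direct analogue of Theorem \ref{polynomial theorem}'s self-contained combinatorial proof; instead the polynomiality of $S(\mathfrak{h})^W$ for $(G_2,2)$ is borrowed from \cite[Theorem 7.2]{KM}, so the honest statement of the proof should simply cite Theorem \ref{G2 polynomial} and not attempt to reprove it.

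\begin{proof}
Since $G_2$ is not of type $A$ and, by Definition \ref{nonspecial defn}, the prime $p = 2$ is nonspecial for $G_2$, Proposition \ref{codimension prop} applies: the dual nilcone $\mathcal{N}^* \subseteq \mathfrak{g}^*$ is a closed irreducible subvariety of $\mathfrak{g}^*$, and, writing $U$ for the set of regular elements of $\mathcal{N}^*$, the set $U$ is a single coadjoint orbit which is open in $\mathcal{N}^*$ and whose complement has codimension $\geq 2$. In particular $U$ is smooth, hence normal.

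We next show that $\mathcal{N}^*$ is Cohen-Macaulay. By Theorem \ref{G2 polynomial}, $S(\mathfrak{h})^W$ is a polynomial ring, so by Theorem \ref{freeness polynomial}, $S(\mathfrak{h})$ is a free $S(\mathfrak{h})^W$-module. Since $(G,p) = (G_2, 2) \neq (B,2)$, Theorem \ref{KW} shows that the restriction map $r'\colon S(\mathfrak{g})^G \to S(\mathfrak{h})^W$ is an isomorphism onto its image $S(\mathfrak{h})^W$. Hence the hypotheses of Lemma \ref{Kostant} are satisfied, and $S(\mathfrak{g})$ is a free module over $R := S(\mathfrak{g}/\mathfrak{h}) \otimes S(\mathfrak{g})^G$, and in particular a free $S(\mathfrak{g})^G$-module. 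As in the proof of Theorem \ref{main theorem 1}, $\mathcal{N}^*$ is an affine variety with $\mathcal{O}(\mathcal{N}^*) = S(\mathfrak{g})/J$, where $J = S^+(\mathfrak{g})^G S(\mathfrak{g})$, and the freeness just established shows that $\mathcal{O}(\mathcal{N}^*)$ is a finitely generated free module over the polynomial algebra $S(\mathfrak{g}/\mathfrak{h})$. Therefore $\mathcal{N}^*$ is a Cohen-Macaulay variety.

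Finally, $\mathcal{N}^*$ is an irreducible affine Cohen-Macaulay scheme, $U \subseteq \mathcal{N}^*$ is an open normal subscheme, and $\mathrm{dim}(\mathcal{N}^* \setminus U) \leq \mathrm{dim}\,\mathcal{N}^* - 2$ by Proposition \ref{codimension prop}. Hence Theorem \ref{important lemma} applies and $\mathcal{N}^*$ is normal.
\end{proof}

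\noindent$\textbf{Proof of Theorem B:}$ This is immediate from Theorem \ref{G2 case}.
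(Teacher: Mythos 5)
Your proposal is correct and follows precisely the same route the paper indicates: the paper's proof of Theorem \ref{G2 case} is simply the remark that the argument for $G = PGL_n$ carries over verbatim, and your write-up spells out that transfer in full detail, correctly substituting Theorem \ref{G2 polynomial} for Theorem \ref{polynomial theorem} and verifying that $(G_2,2)$ is nonspecial and satisfies the hypothesis of Theorem \ref{KW}. No discrepancy with the paper's intended argument.
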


$\textbf{Proof of Theorem B:}$ This is immediate from Theorem \ref{G2 case}. \\

If $S(\mathfrak{h})^W$ is not polynomial, there are significant obstacles to generalising the result that $\mathcal{N}^*$ is a normal variety. In particular, the following behaviour may be observed. \\

- Kostant's freeness theorem, stated as Theorem \ref{freeness polynomial}, fails. This means that $S(\mathfrak{h})$ is not free as an $S(\mathfrak{h})^W$-module, meaning that we cannot apply the argument in Lemma \ref{Kostant} to show that $\mathcal{N^*}$ is a Cohen-Macaulay variety. \\

- The Steinberg quotient $\chi: \mathfrak{g} \to K^n$, defined in Definition \ref{Steinberg quotient}, makes sense as an abstract function, but since the generators $\lbrace f_1, \cdots, f_n \rbrace$ of $S(\mathfrak{h})^W$ are not algebraically independent, we cannot apply the argument in Lemma \ref{smooth elements} to show that the smooth elements of $\mathcal{N}$ coincide with the regular elements, which is a key step in the proof that the Springer resolution $\mu: T^*\mathcal{B} \to \mathcal{N}$ is a resolution of singularities for $\mathcal{N}$. \\

Calculations in \cite[Section 3.2]{Bro} show that, in the following cases (Dynkin diagram of $G$, $p$), the invariant ring $S(\mathfrak{h})^W$ is not even Cohen-Macaulay. \\

(a) $(E_7, 3)$, \\
(b) $(E_8, 3)$, \\
(c) $(E_8, 5)$. \\

\begin{conj}
In case the invariant ring $S(\mathfrak{h})^W$ is not Cohen-Macaulay, is it true that the dual nilpotent cone $\mathcal{N}^*$ is not a normal variety?
\end{conj}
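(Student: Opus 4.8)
The plan is to attack this conjecture through Serre's criterion, reducing it to a statement about the failure of the condition $S_2$ for $\mathcal{O}(\mathcal{N}^*)$. Note first that in each of the relevant cases $(E_7,3)$, $(E_8,3)$, $(E_8,5)$ the prime $p$ is nonspecial (the special list in Definition \ref{nonspecial defn} contains none of these pairs) and is different from $2$, so both Theorem \ref{KW} and Proposition \ref{codimension prop} apply. From Proposition \ref{codimension prop}, $\mathcal{N}^*$ is irreducible of codimension $r$ in $\mathfrak{g}^*$, and the regular locus $U$ is a single coadjoint orbit — hence a smooth subvariety — which is open with complement of codimension $\geq 2$. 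Thus the singular locus of $\mathcal{N}^*$ is contained in $\mathcal{N}^* \setminus U$ and has codimension $\geq 2$, so $\mathcal{N}^*$ is regular in codimension $1$. By Serre's criterion, $\mathcal{N}^*$ is normal if and only if $\mathcal{O}(\mathcal{N}^*)$ satisfies $S_2$, so the conjecture is equivalent to: when $S(\mathfrak{h})^W$ is not Cohen-Macaulay, the graded ring $\mathcal{O}(\mathcal{N}^*) = S(\mathfrak{g})/S^+(\mathfrak{g})^G S(\mathfrak{g})$ fails $S_2$.

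The natural mechanism for producing such a failure is descent of Cohen-Macaulayness along the adjoint quotient. Via Theorem \ref{KW} we identify $S(\mathfrak{g})^G \cong S(\mathfrak{h})^W$, so the adjoint quotient $\chi\colon \mathfrak{g}^* \to Y := \operatorname{Spec} S(\mathfrak{g})^G \cong \operatorname{Spec} S(\mathfrak{h})^W$ has $\mathcal{N}^* = \chi^{-1}(0)$ as its central fibre, and $\dim \mathcal{N}^* = \dim \mathfrak{g}^* - \dim Y$ by Proposition \ref{codimension prop}. If $\chi$ were flat in a neighbourhood of the zero section, then since $\mathfrak{g}^*$ is regular (hence Cohen-Macaulay), the standard descent result for flat local homomorphisms would force $\mathcal{O}_{Y,0}$ to be Cohen-Macaulay whenever $\mathcal{O}_{\mathcal{N}^*,0}$ is; as $S(\mathfrak{h})^W$ is positively graded, failure of the Cohen-Macaulay property forces it at the irrelevant maximal ideal, and we would get a contradiction showing $\mathcal{N}^*$ is not Cohen-Macaulay. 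The difficulty is that this flatness is precisely what is lost here: by the Kostant-type argument of Lemma \ref{Kostant} together with Theorem \ref{freeness polynomial}, $S(\mathfrak{g})$ is flat (indeed free) over $S(\mathfrak{g})^G$ exactly when $S(\mathfrak{h})$ is free over $S(\mathfrak{h})^W$, i.e. when $S(\mathfrak{h})^W$ is polynomial — which excludes our cases. One must therefore analyse $\mathcal{O}(\mathcal{N}^*)$ directly: compute the $S^+(\mathfrak{g})^G$-depth on $S(\mathfrak{g})$, that is, the length of a maximal regular sequence drawn from a minimal generating set $g_1, \dots, g_m$ (with $m > r$) of $S^+(\mathfrak{g})^G$, and track how the defect $m - r$ propagates into the local cohomology of the quotient, bearing in mind that since two-dimensional normal singularities are automatically Cohen-Macaulay the depth drop can only be visible at the cone point or along strata of codimension $\geq 3$. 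As a pragmatic first move I would settle the three exceptional cases by computer algebra (Singular or Macaulay2): present $\mathcal{O}(\mathcal{N}^*)$ explicitly via the $G$-invariants and compute $\operatorname{depth}$ at the cone point and the modules $H^1_{\mathfrak{m}}, H^2_{\mathfrak{m}}$ to confirm — or refute — that $S_2$ fails, before attempting a uniform conceptual proof.

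The main obstacle, and the reason this remains a conjecture, is bridging the gap between "normal" (equivalently $R_1 + S_2$ here) and "Cohen-Macaulay" in bad characteristic. The two classical routes are both unavailable: modular invariant theory has no Reynolds operator, so one cannot split off $\mathcal{O}(\mathcal{N}^*)$ as a direct summand of a nice module, and Grauert–Riemenschneider-type vanishing fails in positive characteristic, so even if one knew the Springer map remained a resolution (which itself is unclear here, since the identification of smooth and regular elements in Lemma \ref{smooth elements} rests on $S(\mathfrak{h})^W$ being polynomial) one could not deduce rational, hence Cohen-Macaulay, singularities. A complete proof will therefore likely require either a modular substitute for one of these tools, or an explicit computation of the obstruction to $S_2$ — for instance of the local cohomology of $\mathcal{O}(\mathcal{N}^*)$ along the non-regular locus — that can be made uniform across the bad cases, with the failure of Cohen-Macaulayness of $S(\mathfrak{h})^W$ entering as the source of a nonzero class in $H^1_{\mathfrak{m}}$ or $H^2_{\mathfrak{m}}$.
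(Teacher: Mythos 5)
This statement is labelled a \emph{conjecture} in the paper, and the paper offers no proof of it; there is therefore nothing to compare your write-up against. You have in effect recognised this: your proposal is a research outline rather than a proof, and it says so explicitly.

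That said, the framing you give is sound as far as it goes. Your reduction via Serre's criterion is correct: in the cases $(E_7,3)$, $(E_8,3)$, $(E_8,5)$ the prime is nonspecial and different from $2$, so Proposition \ref{codimension prop} applies, the regular locus $U$ is a smooth open orbit whose complement has codimension $\geq 2$, hence $\mathcal{N}^*$ satisfies $R_1$ and normality becomes equivalent to $S_2$. You also correctly identify why the standard route (flatness of the adjoint quotient plus descent of Cohen--Macaulayness) is unavailable precisely in these cases, and why the two classical tools -- a Reynolds operator and Grauert--Riemenschneider vanishing -- both fail in bad characteristic. The one thing worth flagging is that you have converted the conjecture (``$\mathcal{N}^*$ is not normal'') into the \emph{a priori} stronger-looking statement ``$\mathcal{O}(\mathcal{N}^*)$ fails $S_2$''; because $R_1$ is established, these are in fact equivalent, but it would be worth saying so explicitly, since failure of Cohen--Macaulayness of $S(\mathfrak{h})^W$ does not by itself entail failure of $S_2$ for $\mathcal{O}(\mathcal{N}^*)$ -- that implication is exactly the open content of the conjecture, and your local-cohomology or computer-algebra proposals are reasonable first steps toward testing it, but they do not yet constitute an argument.
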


\newpage

\section{Applications to representations of $p$-adic Lie groups}\label{annals chapter}

\subsection{Generalising the Beilinson-Bernstein theorem for $\widehat{\mathcal{D}^{\lambda}_{n,K}}$}

In this section, we apply the results of Section \ref{nilcone chapter} to the constructions given in \cite{AW}. This allows us to weaken the restrictions on the characteristic of the base field given in \cite[Section 6.8]{AW}, thereby providing us with generalisations of their results. \\

Throughout Section \ref{annals chapter}, we suppose $R$ is a fixed complete discrete valuation ring with uniformiser $\pi$, residue field $k$ and field of fractions $K$. Assume throughout this section that $K$ has characteristic 0 and $k$ is algebraically closed. \\

We recall some of the arguments from \cite[Section 4]{AW}, to define the sheaf of enhanced vector fields $\widetilde{\mathcal{T}}$ on a smooth scheme $X$, and the relative enveloping algebra $\widetilde{\mathcal{D}}$ of an $\textbf{H}$-torsor $\xi: \widetilde{X} \to X$. \\

Let $X$ be a smooth separated $R$-scheme that is locally of finite type. Let $\textbf{H}$ be a flat affine algebraic group defined over $R$ of finite type, and let $\widetilde{X}$ be a scheme equipped with an $\textbf{H}$-action. \\
 
\begin{defn}\label{torsor def}
A morphism $\xi: \widetilde{X} \to X$ is an $\textbf{H}-\emph{torsor}$ if: \\

(i) $\xi$ is faithfully flat and locally of finite type, \\
(ii) the action of $\textbf{H}$ respects $\xi$, \\
(iii) the map $\widetilde{X} \times \textbf{H} \to \widetilde{X} \times_X \widetilde{X}$, $(x, h) \to (x, hx)$ is an isomorphism. \\

An open subscheme $U$ of $X$ $\emph{trivialises the torsor}$ $\xi$ if there is an $\textbf{H}$-invariant isomorphism:

\begin{gather*}
U \times \textbf{H} \to \xi^{-1}(U)
\end{gather*}

where $\textbf{H}$ acts on $U \times \textbf{H}$ by left translation on the second factor.
\end{defn} \vspace{4.4 mm}

\begin{defn}\label{locally trivial defn}
Let $\mathcal{S}_X$ denote the set of open subschemes $U$ of $X$ such that: \\

(i) $U$ is affine, \\
(ii) $U$ trivialises $\xi$, \\
(iii) $\mathcal{O}(U)$ is a finitely generated $R$-algebra. \\

$\xi$ is $\emph{locally trivial}$ for the Zariski topology if $X$ can be covered by open sets in $\mathcal{S}_X$.
\end{defn} \vspace{4.4 mm}

\begin{lemma}\label{locally trivial lemma}
If $\xi$ is locally trivial, then $\mathcal{S}_X$ is a base for $X$.
\end{lemma}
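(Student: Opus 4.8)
The plan is to show that $\mathcal{S}_X$, the set of affine open subschemes trivialising $\xi$ whose rings of functions are finitely generated $R$-algebras, forms a base for the Zariski topology on $X$: that is, every open subset of $X$ is a union of members of $\mathcal{S}_X$. Since $\xi$ is assumed locally trivial, by Definition \ref{locally trivial defn} there is a covering $X = \bigcup_i U_i$ with each $U_i \in \mathcal{S}_X$; so it suffices to show that for any open $V \subseteq X$ and any point $x \in V$, there is some $W \in \mathcal{S}_X$ with $x \in W \subseteq V$. Picking an index $i$ with $x \in U_i$, we may replace $V$ by $V \cap U_i$ and thereby reduce to the case where $V$ is an open subset of a fixed $U := U_i \in \mathcal{S}_X$; we must produce $W \in \mathcal{S}_X$ with $x \in W \subseteq V$.

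First I would use the fact that $U$ is affine with $\mathcal{O}(U)$ a finitely generated $R$-algebra, so that the principal (basic) open subsets $D(f)$ for $f \in \mathcal{O}(U)$ form a base for the topology on $U$; hence there is some $f \in \mathcal{O}(U)$ with $x \in D(f) \subseteq V$. Set $W := D(f)$. I then need to check the three defining conditions of $\mathcal{S}_X$ for $W$. Condition (i): $W = \operatorname{Spec} \mathcal{O}(U)_f$ is affine. Condition (iii): $\mathcal{O}(W) = \mathcal{O}(U)_f$ is a localisation of a finitely generated $R$-algebra at one element, hence again a finitely generated $R$-algebra (adjoin an inverse of $f$). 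Condition (ii), that $W$ trivialises $\xi$: this is the only step requiring an argument beyond bookkeeping. Since $U$ trivialises $\xi$, there is an $\textbf{H}$-invariant isomorphism $\psi: U \times \textbf{H} \xrightarrow{\sim} \xi^{-1}(U)$ over $U$. Restricting $\psi$ over the open subset $W \subseteq U$ gives an $\textbf{H}$-invariant isomorphism $W \times \textbf{H} \xrightarrow{\sim} \psi(W \times \textbf{H})$; and since $\psi$ lies over $U$ and $W \times \textbf{H}$ is the preimage of $W$ under the first projection $U \times \textbf{H} \to U$, its image $\psi(W\times\textbf H)$ is precisely $\xi^{-1}(W)$. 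So $W$ trivialises $\xi$, and $W \in \mathcal{S}_X$.

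The only mild subtlety — and the step I would be most careful about — is the compatibility of localisation with the torsor structure in condition (ii): one must confirm that the trivialisation over $U$ genuinely restricts to a trivialisation over the basic open $W$ as a morphism \emph{over $X$} (equivalently over $U$), respecting the $\textbf{H}$-action, rather than merely an abstract isomorphism of schemes. This is immediate once one observes that $\xi^{-1}(W) = \xi^{-1}(U) \times_U W$ (base change of the torsor along the open immersion $W \hookrightarrow U$) and that base change of a trivial torsor is trivial. Everything else is routine: basic opens form a base on an affine scheme, and finite generation is preserved under localisation at an element. I would present the argument in exactly the order above: reduce to a neighbourhood inside a single trivialising chart $U \in \mathcal{S}_X$, shrink to a basic open $W = D(f)$ contained in the given open set, and verify (i)–(iii) for $W$, with (ii) handled by restricting the trivialisation along the open immersion.
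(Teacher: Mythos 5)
Your argument is correct and follows essentially the same route as the paper: start from the cover by members of $\mathcal{S}_X$ guaranteed by local triviality, then shrink inside a single trivialising chart $U$ to a smaller member of $\mathcal{S}_X$ contained in a given open set. The paper phrases the shrinking step as the (slightly more general) observation that any open affine subscheme of a member of $\mathcal{S}_X$ is again in $\mathcal{S}_X$, and additionally records that $\mathcal{S}_X$ is stable under intersections by separatedness; you instead shrink directly to a basic open $D(f)$, which is all that is needed for the base property and makes no use of separatedness, and your verification of conditions (i)--(iii) for $D(f)$ (in particular that a restriction of a trivial $\textbf{H}$-torsor along an open immersion is trivial, and that finite generation over $R$ passes to localisations at one element) is exactly what the paper leaves implicit.
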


\begin{proof}
Since $X$ is separated, $\mathcal{S}_X$ is stable under intersections. If $U \in \mathcal{S}_X$ and $W$ is an open affine subscheme of $U$, then $W \in \mathcal{S}_X$. Hence $\mathcal{S}_X$ is a base for $X$.
\end{proof}

The action of $\textbf{H}$ on $\widetilde{X}$ induces a rational action of $\textbf{H}$ on $\mathcal{O}(V)$ for any $\textbf{H}$-stable open subscheme $V \subseteq \widetilde{X}$, and therefore induces an action of $\textbf{H}$ on $\mathcal{T}_{\widetilde{X}}$ via:

\begin{gather*}
(h \cdot \partial)(f) = h \cdot \partial(h^{-1} \cdot f)
\end{gather*}

for $\partial \in \mathcal{T}_{\widetilde{X}}, f \in \mathcal{O}(\widetilde{X})$ and $h \in \textbf{H}$. The $\emph{sheaf of enhanced vector fields}$ on $X$ is:

\begin{gather*}
\widetilde{\mathcal{T}} := (\xi_*\mathcal{T}_{\widetilde{X}})^{\textbf{H}}.
\end{gather*}

Differentiating the $\textbf{H}$-action on $\widetilde{X}$ gives an $R$-linear Lie algebra homomorphism:

\begin{gather*}
j: \mathfrak{h} \to \mathcal{T}_{\widetilde{X}}
\end{gather*}

where $\mathfrak{h}$ is the Lie algebra of $\textbf{H}$. \\

\begin{defn}\label{enhanced cotangent bundle}
Let $\xi: \widetilde{X} \to X$ be an $\textbf{H}$-torsor. Then $\xi_*\mathcal{D}_{\widetilde{X}}$ is a sheaf of $R$-algebras with an $\textbf{H}$-action. The $\emph{relative enveloping algebra}$ of the torsor is the sheaf of $\textbf{H}$-invariants of $\xi_*\mathcal{D}_{\widetilde{X}}$:

\begin{gather*}
\widetilde{\mathcal{D}} := (\xi_*\mathcal{D}_{\widetilde{X}})^{\textbf{H}}.
\end{gather*}
\end{defn}

This sheaf has a natural filtration:

\begin{gather*}
F_m\widetilde{\mathcal{D}} := (\xi_*F_m\mathcal{D}_{\widetilde{X}})^{\textbf{H}}
\end{gather*}

induced by the filtration on $\mathcal{D}_{\widetilde{X}}$ by order of differential operator. \\

Let $\textbf{B}$ be a Borel subgroup of $\textbf{G}$. Let $\textbf{N}$ be the unipotent radical of $\textbf{B}$, and $\textbf{H} := \textbf{B}/\textbf{N}$ the abstract Cartan group. Let $\widetilde{\mathcal{B}}$ denote the homogeneous space $\textbf{G}/\textbf{N}$. There is an $\textbf{H}$-action on $\widetilde{\mathcal{B}}$ defined by:

\begin{gather*}
b\textbf{N} \cdot g \textbf{N} := gb\textbf{N}
\end{gather*}

which is well-defined since $[\textbf{B}, \textbf{B}]$ is contained in $\textbf{N}$. $\mathcal{B} := \textbf{G}/\textbf{B}$ is the $\emph{flag variety}$ of $\textbf{G}$. $\widetilde{\mathcal{B}}$ is the $\emph{basic affine space}$ of $\textbf{G}$. \\

By the splitting assumption of $\textbf{G}$, we can find a Cartan subgroup $\textbf{T}$ of $\textbf{G}$ complementary to $\textbf{N}$ in $\textbf{B}$. This is naturally isomorphic to $\textbf{H}$, and induces an isomorphism of the corresponding Lie algebras $\mathfrak{t} \to \mathfrak{h}$. \\

We let $\textbf{W}$ denote the Weyl group of $\textbf{G}$, and let $\textbf{W}_k$ denote the Weyl group of $\textbf{G}_k$, the $k$-points of the algebraic group $\textbf{G}$. \\

We may differentiate the natural $\textbf{G}$-action on $\widetilde{\mathcal{B}}$ to obtain an $R$-linear Lie homomorphism:

\begin{gather*}
\varphi: \mathfrak{g} \to \mathcal{T}_{\widetilde{\mathcal{B}}}.
\end{gather*}

Since the $\textbf{G}$-action commutes with the $\textbf{H}$-action on $\widetilde{\mathcal{B}}$, this map descends to an $R$-linear Lie homomorphism $\varphi: \mathfrak{g} \to \widetilde{\mathcal{T}}_{\mathcal{B}}$ and an $\mathcal{O}_{\mathcal{B}}$-linear morphism:

\begin{gather*}
\varphi: \mathcal{O}_{\mathcal{B}} \otimes \mathfrak{g} \to \widetilde{\mathcal{T}}_{\mathcal{B}}
\end{gather*}

of locally free sheaves on $\mathcal{B}$. Dualising, we obtain a morphism of vector bundles over $\mathcal{B}$:

\begin{gather*}
\varphi^*: \widetilde{T^*\mathcal{B}} \to \mathcal{B} \times \mathfrak{g}^*
\end{gather*}

from the enhanced cotangent bundle to the trivial vector bundle of rank dim $\mathfrak{g}$. \\

\begin{defn}\label{enhanced moment map}
The $\emph{enhanced moment map}$ is the composition of $\varphi^*$ with the projection onto the second coordinate:

\begin{gather*}
\beta: \widetilde{T^*\mathcal{B}} \to \mathfrak{g}^*.
\end{gather*}
\end{defn}

We may apply the deformation functor (\cite[Section 3.5]{AW}) to the map $j: U(\mathfrak{h}) \to \widetilde{\mathcal{D}}$, defined above Definition \ref{enhanced cotangent bundle}, to obtain a central embedding of the constant sheaf $U(\mathfrak{h})_n$ into $\widetilde{\mathcal{D}}_n$. This gives $\widetilde{\mathcal{D}}_n$ the structure of a $U(\mathfrak{h})_n$-module. \\

Let $\lambda \in \text{Hom}_R(\pi^n\mathfrak{h}, R)$ be a linear functional. This extends to an $R$-algebra homomorphism $U(\mathfrak{h})_n \to R$, which gives $R$ the structure of a $U(\mathfrak{h})_n$-module, denoted $R_{\lambda}$. \\

\begin{defn}
The $\emph{sheaf of deformed twisted differential operators}$ $\mathcal{D}^{\lambda}_n$ on $\mathcal{B}$ is the sheaf:

\begin{gather*}
\mathcal{D}^{\lambda}_n := \widetilde{\mathcal{D}_n} \otimes_{U(\mathfrak{h})_n} R_{\lambda}
\end{gather*}
\end{defn}

By \cite[Lemma 6.4(b)]{AW}, this is a sheaf of deformable $R$-algebras. \\ 

\begin{defn}
The $\pi$-$\emph{adic completion}$ of $\mathcal{D}^{\lambda}_n$ is $\widehat{\mathcal{D}^{\lambda}_n} := \varprojlim \mathcal{D}^{\lambda}_n /\pi^a\mathcal{D}^{\lambda}_n$. Furthermore, set $\widehat{\mathcal{D}^{\lambda}_{n,K}} := \widehat{\mathcal{D}^{\lambda}_n} \otimes_R K$.
\end{defn} \vspace{4.4 mm}

The adjoint action of $\textbf{G}$ on $\mathfrak{g}$ extends to an action on $U(\mathfrak{g})$ by ring automorphisms, which is filtration-preserving and so descends to an action on $\text{gr } U(\mathfrak{g}) \cong S(\mathfrak{g})$. Let:

\begin{gather*}
\psi: S(\mathfrak{g})^{\textbf{G}} \to S(\mathfrak{t})
\end{gather*}

denote the composition of the inclusion $S(\mathfrak{g})^{\textbf{G}} \to S(\mathfrak{g})$ with the projection $S(\mathfrak{g}) \to S(\mathfrak{t})$. By \cite[Theorem 7.3.7]{Di}, the image of $\psi$ is contained in $S(\mathfrak{t})^{\textbf{W}}$, and $\psi$ is injective. \\

Since taking $\textbf{G}$-invariants is left exact, we have an inclusion $\text{gr }(U(\mathfrak{g})^{\textbf{G}}) \to S(\mathfrak{g})^{\textbf{G}}$. Our next proposition gives a description of the associated graded ring of $U(\mathfrak{g})^{\textbf{G}}$. \\

\begin{prop}\label{6.9}
The rows of the diagram:

\begin{figure}[H]
\centering
\begin{tikzcd}
0 \arrow{r} & \text{gr}(U(\mathfrak{g})^{\textbf{G}}) \arrow{r}{\pi} \arrow{d}{\iota} & \text{gr}(U(\mathfrak{g})^{\textbf{G}}) \arrow{r} \arrow {d}{\iota} & \text{gr}(U(\mathfrak{g}_k)^{\textbf{G}_k}) \arrow{d}{\iota_k} \arrow{r} & 0 \\
0 \arrow{r} & S(\mathfrak{g})^{\textbf{G}} \arrow{r}{\pi} \arrow{d}{\psi} & S(\mathfrak{g})^{\textbf{G}} \arrow{r} \arrow{d}{\psi} & S(\mathfrak{g}_k)^{\textbf{G}_k} \arrow{d}{\psi_k} \arrow{r} & 0 \\
0 \arrow{r} & S(\mathfrak{t})^{\textbf{W}} \arrow{r}{\pi} & S(\mathfrak{t})^{\textbf{W}} \arrow{r} & S(\mathfrak{t}_k)^{\textbf{W}_k} \arrow{r} & 0 \\
\end{tikzcd}
\end{figure}

are exact, and each vertical map is an isomorphism.
\end{prop}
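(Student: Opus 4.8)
The plan is to carry out the argument of the corresponding result of \cite{AW}, feeding in the polynomiality statements of Section \ref{nilcone chapter} and \cite[Theorem 4]{KW} wherever \cite{AW} uses the ``very good'' hypothesis; the running hypothesis is $\textbf{G} = PGL_n$ with $p \mid n$. The common mechanism is that every object in the diagram is, in each degree, a finite free $R$-module, that $-\otimes_R K$ and $-\otimes_R k$ commute with the formation of invariants and with passage to the associated graded, and that the vertical maps are injective and become isomorphisms after both base changes; Nakayama's lemma then closes everything up.

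First, freeness. In each fixed degree, $S(\mathfrak{t})^{\textbf{W}}$, $S(\mathfrak{g})^{\textbf{G}}$ and the pieces $F_m(U(\mathfrak{g})^{\textbf{G}})$ are submodules of the finite free $R$-modules $S^i(\mathfrak{t})$, $S^i(\mathfrak{g})$, $F_m U(\mathfrak{g})$ (the last free by PBW), hence are themselves free over the principal ideal domain $R$; checking that $F_{m-1}(U(\mathfrak{g})^{\textbf{G}})$ is saturated in $F_m(U(\mathfrak{g})^{\textbf{G}})$ gives freeness of $\text{gr}_m(U(\mathfrak{g})^{\textbf{G}})$ as well. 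Second, good reduction. Since $\textbf{G}$ and $\textbf{W}$ are flat over $R$, an element of one of these free $R$-modules is fixed by the group scheme precisely when it is fixed after inverting $\pi$; hence $S(\mathfrak{g})^{\textbf{G}} = S(\mathfrak{g}) \cap S(\mathfrak{g}_K)^{\textbf{G}}$, and similarly for $S(\mathfrak{t})^{\textbf{W}}$ and $U(\mathfrak{g})^{\textbf{G}}$, so these invariant subrings are saturated, their cokernels in the ambient free modules are $\pi$-torsion-free, and consequently $-\otimes_R k$ embeds each invariant ring (and its associated graded) into the invariants of the special fibre, while $-\otimes_R K$ computes invariants over $K$.

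The crux is a rank count. Over $K$, which has characteristic zero, the classical type $A$ theory gives that $S(\mathfrak{t}_K)^{\textbf{W}}$, $S(\mathfrak{g}_K)^{\textbf{G}}$ and $\text{gr}(U(\mathfrak{g}_K)^{\textbf{G}})$ are polynomial rings on generators of degrees $2, 3, \dots, n$ and that $\psi_K$, $\iota_K$ are isomorphisms. Over $k$, Theorem \ref{polynomial theorem} together with Theorem \ref{freeness polynomial} gives that $S(\mathfrak{t}_k)^{\textbf{W}_k}$ is polynomial on generators in those same degrees, and Theorem \ref{KW} — applicable because $(\textbf{G}_k, p)$ is of type $A$, hence is not $(B,2)$ — shows $\psi_k$ is an isomorphism, so $S(\mathfrak{g}_k)^{\textbf{G}_k}$ has the same graded Hilbert series; granting surjectivity of the symbol map $\iota_k$ (a good-characteristic phenomenon, exhibited for type $A$ by the Capelli/Gelfand invariants), the same holds for $\text{gr}(U(\mathfrak{g}_k)^{\textbf{G}_k})$. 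Thus the $K$-version and the $k$-version of each of the three rings have equal graded dimensions, so each free $R$-module $S(\mathfrak{t})^{\textbf{W}}$, $S(\mathfrak{g})^{\textbf{G}}$, $\text{gr}(U(\mathfrak{g})^{\textbf{G}})$ has, in every degree, rank equal to that common dimension; hence the injections $-\otimes_R k$ from the previous step are also surjective, i.e. isomorphisms. This is exactly the assertion that each of the three rows of the diagram is short exact, and it identifies $\psi_k$, $\iota_k$ with the reductions modulo $\pi$ of $\psi$, $\iota$.

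Finally, $\psi$ is injective by \cite[Theorem 7.3.7]{Di} and $\iota$ is injective as a symbol map; after $-\otimes_R K$ each becomes the classical isomorphism and after $-\otimes_R k$ each becomes $\psi_k$, $\iota_k$, which are isomorphisms. Hence $\text{coker}\,\psi$ and $\text{coker}\,\iota$ are finitely generated torsion $R$-modules killed by $\pi$, so they vanish by Nakayama, and $\psi$, $\iota$ are isomorphisms; commutativity of every square is immediate from the compatibility of the Chevalley projections, the symbol maps and reduction modulo $\pi$. The main obstacle is the rank count of the third paragraph: for a general bad prime the graded dimensions of the positive-characteristic invariant rings differ from the characteristic-zero ones and the whole argument collapses, and it is precisely because $\textbf{G} = PGL_n$ with $p \mid n$ that Theorems \ref{polynomial theorem} and \ref{KW} restore the polynomiality that keeps the Hilbert series in step.
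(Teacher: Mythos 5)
Your argument is essentially correct, but it takes a genuinely different route from the paper's. The paper proves the proposition by a homological bootstrap: it views the diagram as a composite of complexes $C^{\bullet}\to D^{\bullet}\to E^{\bullet}$, proves exactness of $E^{\bullet}$ by lifting the homogeneous generators of $S(\mathfrak{t}_k)^{\textbf{W}_k}$, forms the cokernel complex $F^{\bullet}$ of $\psi^{\bullet}\circ\iota^{\bullet}$, and uses the long exact sequence together with the fact that $F^0=F^1$ is $\pi$-torsion (because $\psi_K\circ\iota_K$ is an isomorphism in characteristic zero) to deduce $F^{\bullet}=0$, then closes with the Five Lemma. Your proof instead runs on a freeness/saturation argument plus a Hilbert-series comparison: show the filtration pieces are free and saturated, show the reduction maps are degreewise injections, and then force surjectivity by matching graded ranks over $K$ and over $k$. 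This is a more hands-on and arguably more elementary route, and it has the virtue of making the role of the polynomiality theorems completely transparent as rank data. One small inelegance: you flag surjectivity of the symbol map $\iota_k$ as an additional input (''a good-characteristic phenomenon, exhibited ... by the Capelli/Gelfand invariants''), but in fact you do not need to assume it. Since $\text{gr}(U(\mathfrak{g})^{\textbf{G}})\otimes_R k$ injects into $\text{gr}(U(\mathfrak{g}_k)^{\textbf{G}_k})$, which in turn injects via $\iota_k$ into $S(\mathfrak{g}_k)^{\textbf{G}_k}$, and the two ends of this chain already have the same graded dimension (the left by base change to $K$ and the classical Harish-Chandra isomorphism, the right by $\psi_k$ being an isomorphism and the matching degrees of generators), the squeeze forces every intermediate inclusion to be an equality — including $\iota_k$. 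Making that explicit removes the only soft spot in an otherwise valid alternative proof. The paper's cohomological approach is precisely designed to avoid having to produce surjectivity of $\iota_k$ as an input, so the two proofs converge on the same obstruction but resolve it by different means.
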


\begin{proof}
View the diagram as a sequence of complexes $C^{\bullet} \to D^{\bullet} \to E^{\bullet}$. Since $\pi$ generates the maximal ideal $\mathfrak{m}$ of $R$ by definition, and $R/\mathfrak{m} = k$, it is clear that each complex is exact in the left and in the middle. The exactness of $E^{\bullet}$ follows from the fact that $S(\mathfrak{t}_k)^{\textbf{W}_k}$ is a polynomial ring by Theorem \ref{polynomial theorem}: since $n > 2$ we may fix homogeneous generators $s_1, \cdots, s_l$ and lift these generators to homogeneous generators $S_1, \cdots, S_l$ of the ring $S(\mathfrak{t})^{\textbf{W}}$ with $s_i = S_i (\text{mod } \mathfrak{m})$ by the proof of \cite[Proposition 5.1]{KM}. Hence the map $S(\mathfrak{t})^{\textbf{W}} \to S(\mathfrak{t}_k)^{\textbf{W}_k}$ is surjective, and the complex $E^{\bullet}$ is exact. \\

By \cite[Theorem 7.3.7]{Di}, $\psi$ is injective, and since $p$ is nonspecial from Definition \ref{nonspecial defn}, $\psi_k$ is an isomorphism by Theorem \ref{KW}. Thus the composite map of complexes $\psi^{\bullet} \circ \iota^{\bullet}$ is injective. Set $F^{\bullet} := \text{coker}(\psi^{\bullet} \circ \iota^{\bullet})$: by definition, the sequence of complexes $0 \to C^{\bullet} \to E^{\bullet} \to F^{\bullet} \to 0$ is exact. \\

Since $C^{\bullet}$ is exact in the left and in the middle, $H^0(C^{\bullet}) = H^1(C^{\bullet}) = 0$. As $E^{\bullet}$ is exact, taking the long exact sequence of cohomology shows that $H^0(F^{\bullet}) = H^2(F^{\bullet}) = 0$ and yields an isomorphism $H^1(F^{\bullet}) \cong H^2(C^{\bullet})$. \\

Since $K$ is a field of characteristic zero, the map $\psi_K \circ \iota_K: \text{gr}(U(\mathfrak{g}_K)^{\textbf{G}_K}) \to S(\mathfrak{t}_K)^{\textbf{W}_K}$ is an isomorphism by \cite[Theorem 7.3.7]{Di}. Hence $F^0 = F^1 = \text{coker}(\psi \circ \iota)$ is $\pi$-torsion. Now $H^0(F^{\bullet}) = 0$, and so we have an exact sequence $0 \to F^0 \to F^1$. So $F^0 = F^1 = 0$, and hence $H^1(F^{\bullet}) = H^2(C^{\bullet}) = 0$. It follows that the top row $C^{\bullet}$ is exact. \\

Hence $\psi^{\bullet} \circ \iota^{\bullet}: C^{\bullet} \to E^{\bullet}$ is an isomorphism in all degrees except possibly 2, and so is an isomorphism via the Five Lemma. The result follows from the fact that $\psi^{\bullet}$ and $\iota^{\bullet}$ are both injections.
\end{proof}

It follows that, since $\psi \circ \iota$ is a graded isomorphism and $p$ is nonspecial, $\text{gr}(U(\mathfrak{g})^\textbf{G})$ is isomorphic to a commutative polynomial algebra over $R$ in $l$ variables by Theorem \ref{polynomial theorem}. The commutative polynomial algebra $R[x_1, \cdots, x_l]$ is a free $R$-module and hence is flat, and so $(U(\mathfrak{g})^\textbf{G})$ is a deformable $R$-algebra by \cite[Definition 3.5]{AW}. Furthermore, $\widehat{U(\mathfrak{g})^\textbf{G}_{n,K}}$ is also a commutative polynomial algebra over $R$ in $l$ variables, so the $\pi$-adic completion $\widehat{U(\mathfrak{g})^\textbf{G}_{n,K}}$ is a commutative Tate algebra. \\

By \cite[Proposition 4.10]{AW}, we have a commutative square consisting of deformable $R$-algebras:

\begin{figure}[H]
\centering
\begin{tikzcd}
U((\mathfrak{g})^{\textbf{G}})_n \arrow{r}{\phi_n} \arrow{d}{i_n} & U(\mathfrak{t})_n \arrow{d}{(j \circ i)_n} \\
U(\mathfrak{g})_n \arrow{r}{U(\phi)_n} & \widetilde{\mathcal{D}_n},
\end{tikzcd}
\end{figure}

We set:

\begin{gather*}
\mathcal{U}^{\lambda}_n := U(\mathfrak{g}) \otimes_{(U(\mathfrak{g})^{\textbf{G}})_n} R_{\lambda}, \\
\widehat{\mathcal{U}^{\lambda}_n} := \varprojlim \frac{\mathcal{U}^{\lambda}_n}{\pi^a \mathcal{U}^{\lambda}_n}, \\
\widehat{\mathcal{U}^{\lambda}_{n,K}} := \widehat{\mathcal{U}^{\lambda}_n} \otimes_R K.
\end{gather*}

By commutativity of the diagram, the map:

\begin{gather*}
U(\phi)_n \otimes (j \circ i)_n: U(\mathfrak{g})_n \otimes U(\mathfrak{t})_n \to \widetilde{\mathcal{D}_n}
\end{gather*}

factors through $U((\mathfrak{g})^\textbf{G})_n$, and we obtain the algebra homomorphisms:

\begin{gather*}
\phi^{\lambda}_n: \mathcal{U}^{\lambda}_n \to \mathcal{D}^{\lambda}_n, \\
\widehat{\phi^{\lambda}_n}: \widehat{\mathcal{U}^{\lambda}_n} \to \widehat{\mathcal{D}^{\lambda}_n}, \\
\widehat{\phi^{\lambda}_{n,K}}: \widehat{\mathcal{U}^{\lambda}_{n,K}} \to \widehat{\mathcal{D}^{\lambda}_{n,K}}.
\end{gather*}

\begin{thm}\label{6.10}
(a) $\widehat{\mathcal{U}^{\lambda}_{n,K}} \cong \widehat{U(\mathfrak{g})_{n,K}} \otimes_{\widehat{U(\mathfrak{g})^\textbf{G}_{n,K}}} K_{\lambda}$ is an almost commutative affinoid $K$-algebra. \\

(b) The map $\widehat{\phi^{\lambda}_{n,K}}: \widehat{\mathcal{U}^{\lambda}_{n,K}} \to \Gamma(\mathcal{B}, \widehat{\mathcal{D}^{\lambda}_{n,K}})$ is an isomorphism of complete doubly filtered $K$-algebras. \\

(c) There is an isomorphism $S(\mathfrak{g}_k) \otimes_{S(\mathfrak{g}_k)^{\textbf{G}_k}} k \cong \text{Gr }(\widehat{\mathcal{U}^{\lambda}_{n,K}})$.
\end{thm}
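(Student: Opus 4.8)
The plan is to follow the strategy of Ardakov and Wadsley in \cite[Section 6.8]{AW}, where the analogous statement is proved when $p$ is very good for $\textbf{G}$. The three places in that argument where the very good hypothesis enters are: the polynomiality of $S(\mathfrak{t}_k)^{\textbf{W}_k}$; the freeness of $S(\mathfrak{g}_k)$ over $S(\mathfrak{g}_k)^{\textbf{G}_k}$; and the identification of the global sections of $\mathcal{O}_{T^*\mathcal{B}_k}$ with the coordinate ring of the (reduced) dual nilcone. In our setting these are supplied respectively by Theorem \ref{polynomial theorem}, by Lemma \ref{Kostant} (whose hypotheses hold for $G = PGL_n$, $p \mid n$, $n > 2$ in view of Theorems \ref{polynomial theorem}, \ref{freeness polynomial} and \ref{KW}), and by Theorem \ref{main theorem 1} together with Corollary \ref{global sections isomorphism}. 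I would prove (c) first, deduce (a) from it and the deformation machinery of \cite[Section 3--4]{AW}, and then prove (b) by reduction to the associated graded.

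For (c), apply the total associated graded functor $\text{Gr}$ (with respect to the order filtration and the $\pi$-adic filtration) to $\mathcal{U}^{\lambda}_n = U(\mathfrak{g}) \otimes_{(U(\mathfrak{g})^{\textbf{G}})_n} R_{\lambda}$. On the central subalgebra, Proposition \ref{6.9} gives $\text{gr}(U(\mathfrak{g})^{\textbf{G}}) \cong S(\mathfrak{g})^{\textbf{G}}$ together with surjectivity of $S(\mathfrak{g})^{\textbf{G}} \to S(\mathfrak{g}_k)^{\textbf{G}_k}$, while $\text{gr}\, U(\mathfrak{g}) \cong S(\mathfrak{g})$. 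Lemma \ref{Kostant} shows $S(\mathfrak{g})$ is a free $S(\mathfrak{g})^{\textbf{G}}$-module; since the deformation functor $(-)_n$, $\pi$-adic completion and $-\otimes_R K$ preserve freeness of the associated graded, this is exactly the input that lets the formation of $\text{Gr}$ commute with the base change $-\otimes_{(U(\mathfrak{g})^{\textbf{G}})_n} R_{\lambda}$. As $R_{\lambda}$ has associated graded $k$ (the functional $\lambda$, being divisible by $\pi^n$, dies on $\text{gr}$), one obtains $\text{Gr}(\widehat{\mathcal{U}^{\lambda}_{n,K}}) \cong S(\mathfrak{g}_k) \otimes_{S(\mathfrak{g}_k)^{\textbf{G}_k}} k$.

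For (a), the isomorphism $\widehat{\mathcal{U}^{\lambda}_{n,K}} \cong \widehat{U(\mathfrak{g})_{n,K}} \otimes_{\widehat{U(\mathfrak{g})^{\textbf{G}}_{n,K}}} K_{\lambda}$ follows because, by the freeness from Lemma \ref{Kostant} (applied after $(-)_n$), $\pi$-adic completion and $-\otimes_R K$ commute with the relevant tensor products, exactly as in \cite[Section 4]{AW}. Since $\widehat{U(\mathfrak{g})_{n,K}}$ is almost commutative affinoid by \cite{AW} and $K_{\lambda}$ is the quotient of the central commutative Tate algebra $\widehat{U(\mathfrak{g})^{\textbf{G}}_{n,K}}$ by a maximal ideal, $\widehat{\mathcal{U}^{\lambda}_{n,K}}$ is a quotient of an almost commutative affinoid algebra by a two-sided ideal generated by central elements; equivalently, by (c) its $\text{Gr}$ is a finitely generated commutative $k$-algebra. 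Either way it is itself almost commutative affinoid.

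For (b), the map $\widehat{\phi^{\lambda}_{n,K}}$ is a morphism of complete doubly filtered $K$-algebras, so it suffices to prove $\text{Gr}(\widehat{\phi^{\lambda}_{n,K}})$ is an isomorphism and then invoke the standard fact (\cite{AW}) that a morphism of complete doubly filtered $K$-algebras inducing an isomorphism on $\text{Gr}$ is an isomorphism. By (c) the source of $\text{Gr}(\widehat{\phi^{\lambda}_{n,K}})$ is $S(\mathfrak{g}_k) \otimes_{S(\mathfrak{g}_k)^{\textbf{G}_k}} k$, which by the proof of Theorem \ref{main theorem 1} is reduced (indeed normal) and hence is the coordinate ring $\mathcal{O}(\mathcal{N}^*_k)$. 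On the target side $\text{Gr}(\widehat{\mathcal{D}^{\lambda}_{n,K}})$ is the pushforward $\pi_*\mathcal{O}_{T^*\mathcal{B}_k}$ (the deformation parameter and the $\lambda$-twist both vanish on the associated graded), and left-exactness of global sections gives an injection $\text{Gr}(\Gamma(\mathcal{B}, \widehat{\mathcal{D}^{\lambda}_{n,K}})) \hookrightarrow \Gamma(\mathcal{B}_k, \pi_*\mathcal{O}_{T^*\mathcal{B}_k}) = \mathcal{O}(T^*\mathcal{B}_k)$. The composite $\mathcal{O}(\mathcal{N}^*_k) \to \text{Gr}(\Gamma(\mathcal{B}, \widehat{\mathcal{D}^{\lambda}_{n,K}})) \hookrightarrow \mathcal{O}(T^*\mathcal{B}_k)$ is identified with the pullback along the Springer map, which is an isomorphism by Corollary \ref{global sections isomorphism}; since the composite is an isomorphism and the last arrow is injective, both arrows are isomorphisms, and in particular $\text{Gr}(\widehat{\phi^{\lambda}_{n,K}})$ is an isomorphism. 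The main obstacle is the bookkeeping with the double filtration --- checking that $\text{Gr}$ commutes with the base changes and with global sections, which is the technical heart of \cite{AW} --- together with the one genuinely new ingredient: the identification $\Gamma(\mathcal{B}_k, \pi_*\mathcal{O}_{T^*\mathcal{B}_k}) \cong \mathcal{O}(\mathcal{N}^*_k)$, which replaces the very good hypothesis of \cite{AW} and is now available from the normality of $\mathcal{N}^*$ (Theorem \ref{main theorem 1}) via Corollary \ref{global sections isomorphism}; one should also spell out that $S(\mathfrak{g}_k) \otimes_{S(\mathfrak{g}_k)^{\textbf{G}_k}} k$ is reduced, so that it really is $\mathcal{O}(\mathcal{N}^*_k)$.
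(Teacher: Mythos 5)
Your proposal is essentially correct and uses the same key inputs as the paper (Theorem \ref{polynomial theorem}, Lemma \ref{Kostant}, Theorem \ref{main theorem 1}, Corollary \ref{global sections isomorphism}), but it reorders the parts and uses a different argument for (b) and (c). The paper proves (a), then proves (b) by showing that the \v{C}ech-style complex $C^{\bullet}: 0 \to \widehat{\mathcal{U}^{\lambda}_{n,K}} \to \bigoplus_i \widehat{\mathcal{D}^{\lambda}_{n,K}}(U_i) \to \bigoplus_{i<j} \widehat{\mathcal{D}^{\lambda}_{n,K}}(U_i \cap U_j)$ is exact, reducing to exactness of $\text{Gr}(C^{\bullet})$, and feeding in the normality of $\mathcal{N}^*$ plus Corollary \ref{global sections isomorphism} at the level of the top row of a commutative two-row diagram; it then deduces (c) from the commutativity of the first square of that diagram (which the paper packages as a Five Lemma argument). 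You instead prove (c) first, deduce (a), and prove (b) by sandwiching $\text{Gr}(\widehat{\phi^{\lambda}_{n,K}})$ between the identifications $\text{Gr}(\widehat{\mathcal{U}^{\lambda}_{n,K}}) \cong \mathcal{O}(\mathcal{N}^*_k)$ and $\text{Gr}(\Gamma(\mathcal{B},\widehat{\mathcal{D}^{\lambda}_{n,K}})) \hookrightarrow \mathcal{O}(T^*\mathcal{B}_k)$, using that the composite is the Springer pullback. The one place you should be more careful is the direct proof of (c): the paper is explicit that freeness only delivers a \emph{surjection} $S(\mathfrak{g}_k)\otimes_{S(\mathfrak{g}_k)^{\mathbf{G}_k}}k \twoheadrightarrow \text{Gr}(\widehat{\mathcal{U}^{\lambda}_{n,K}})$ for free, and obtains injectivity only after establishing exactness of the top row of its diagram (which again uses the Springer/normality input). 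Your phrase ``freeness lets the formation of $\text{Gr}$ commute with the base change'' glosses exactly the point that needs the full deformation/completion bookkeeping of \cite[Section 3]{AW} (or the paper's \v{C}ech route); the associated-graded of a quotient filtration need not agree with the quotient of associated-gradeds without such an argument. If you lean on the relevant lemmas of \cite{AW} explicitly (rather than asserting the commutation), the reordered route goes through; otherwise the cleanest way to obtain injectivity in (c) is, as the paper does, to get it for free from the first square of the diagram once the top row is known to be exact, which is precisely where normality of $\mathcal{N}^*$ and Corollary \ref{global sections isomorphism} enter.
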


\begin{proof}
(a): This is identical to the proof given in \cite[Theorem 6.10(a)]{AW}. \\

(b): Let $ \lbrace U_1, \cdots, U_m \rbrace$ be an open cover of $\mathcal{B}$ by open affines that trivialise the torsor $\xi$, which exists by \cite[Lemma 4.7(c)]{AW}. The special fibre $\mathcal{B}_k$ is covered by the special fibres $U_{i,k}$. It suffices to show that the complex:

\begin{gather*}
C^{\bullet}: 0 \to \widehat{\mathcal{U}_{n,K}} \to \bigoplus_{i=1}^m \widehat{\mathcal{D}^{\lambda}_{n,K}}(U_i) \to \bigoplus_{i<j} \widehat{\mathcal{D}^{\lambda}_{n,K}}(U_i \cap U_j)
\end{gather*}

is exact. \\

Clearly, $C^{\bullet}$ is a complex in the category of complete doubly-filtered $K$-algebras, and so it suffices to show that the associated graded complex $\text{Gr}(C^{\bullet})$ is exact. By \cite[Corollary 3.7]{AW}, there is a commutative diagram with exact rows:

\begin{figure}[ht!]
\centering
\begin{tikzcd}
0 \arrow{r} & \text{gr}(U(\mathfrak{g})^{\textbf{G}}) \arrow{r}{\pi} \arrow{d} & \text{gr}(U(\mathfrak{g})^{\textbf{G}}) \arrow{r} \arrow {d} & \text{Gr}(\widehat{U(\mathfrak{g})^{\textbf{G}}_K}) \arrow{d} \arrow{r} & 0 \\
0 \arrow{r} & \text{gr}(U(\mathfrak{g})) \arrow{r}{\pi} & \text{gr}(U(\mathfrak{g})) \arrow{r} & \text{Gr}(\widehat{U(\mathfrak{g})_{n,K}}) \arrow{r} & 0. \\
\end{tikzcd}
\end{figure}

Via the identification $\text{gr}(U(\mathfrak{g})) = S(\mathfrak{g})$, Proposition \ref{6.9} induces a commutative square:

\begin{figure}[ht!]
\centering
\begin{tikzcd}
\text{Gr}(\widehat{U(\mathfrak{g})^{\textbf{G}}_{n,K}}) \arrow{r} \arrow{d} & S(\mathfrak{g}_k)^{\textbf{G}_k} \arrow{d} \\
\text{Gr}(\widehat{U(\mathfrak{g})_{n,K}}) \arrow{r} & S(\mathfrak{g}_k)
\end{tikzcd}
\end{figure}

where the horizontal maps are isomorphisms and the vertical maps are inclusions. Since $\text{Gr}(K_{\lambda})$ is the trivial $\text{Gr}(\widehat{U(\mathfrak{g})^{\textbf{G}}_{n,K}})$-module $k$, we have a natural surjection:

\begin{gather*}
S(\mathfrak{g}_k) \otimes_{S(\mathfrak{g}_k)^{{\textbf{G}}_k}} k \cong \text{Gr}(\widehat{U(\mathfrak{g})_{n,K}} \otimes_{\text{Gr}(\widehat{U(\mathfrak{g})^{\textbf{G}}_{n,K}})} \text{Gr}(K_{\lambda})) \to \text{Gr}(\widehat{\mathcal{U}^{\lambda}_{n,K}}).
\end{gather*}

This surjection fits into the commutative diagram:

\begin{figure}[ht!]
\centering
\begin{tikzcd}
0 \arrow{r} & S(\mathfrak{g}_k) \otimes_{S(\mathfrak{g}_k)^{{\textbf{G}}_k}} k \arrow{r} \arrow{d} & \bigoplus_{i=1}^m \mathcal{O}(T^*U_{i,k}) \arrow{r} & \bigoplus_{i<j} \mathcal{O}(T^*(U_{i,k} \cap U_{j,k})) \\
0 \arrow{r} & \text{Gr}(\widehat{\mathcal{U}^{\lambda}_{n,K}}) \arrow{r} & \bigoplus_{i=1}^m \text{Gr}(\widehat{\mathcal{D}^{\lambda}_{n,K}}(U_i)) \arrow{r} \arrow{u} & \bigoplus_{i<j} \text{Gr}(\widehat{\mathcal{D}^{\lambda}_{n,K}}(U_i \cap U_j)). \arrow{u} \\
\end{tikzcd}
\end{figure}

The bottom row is $\text{Gr}(C^{\bullet})$ by definition, and the top row is induced by the moment map $T^*\mathcal{B}_k \to \mathfrak{g}_k^*$. To see this, note that by Lemma \ref{enhanced nilpotent cone cotangent}, we have an identification $\widetilde{\mathcal{N}^*} \to T^*\mathcal{B}$ under our assumptions on $p$, and so exactness of the top row is equivalent to the existence of an isomorphism:

\begin{gather*}
S(\mathfrak{g}) \otimes_{S(\mathfrak{g})^{\textbf{G}}} k \cong \Gamma(\widetilde{\mathcal{N}^*}, \mathcal{O}_{\widetilde{\mathcal{N}^*}}).
\end{gather*}

By Theorem \ref{main theorem 1}, $\mathcal{N}^*$ is a normal variety and, by Theorem \ref{Springer resolution}, the map $\gamma: T^*\mathcal{B} \to \mathcal{N}^*$ is a resolution of singularities. It follows, by Corollary \ref{global sections isomorphism}, that there is an isomorphism of global sections:

\begin{gather*}
\gamma^*: \Gamma(\mathcal{N}^*, \mathcal{O}_{\mathcal{N}^*}) \to \Gamma(T^*\mathcal{B}, \mathcal{O}_{T^*\mathcal{B}}).
\end{gather*}

Recall from the proof of Theorem \ref{main theorem 1} that $\mathcal{O}(\mathcal{N}^*) = S(\mathfrak{g}) \otimes_{S(\mathfrak{g})^{\textbf{G}}} k$. Putting these isomorphisms together, we see that $S(\mathfrak{g}) \otimes_{S(\mathfrak{g})^{\textbf{G}}} k \cong \Gamma(\widetilde{\mathcal{N}^*}, \mathcal{O}_{\widetilde{\mathcal{N}^*}})$. \\

Now the second and third vertical arrows are isomorphisms by \cite[Proposition 6.5(a)]{AW}, which shows that $\text{Gr}(C^{\bullet})$ is exact. \\

(c) This is immediate, since one can also show that the first vertical arrow in the above diagram is an isomorphism via the Five Lemma.
\end{proof}

\begin{defn}\label{twisted localisation}
For each $\lambda \in \text{Hom}_R(\pi^n\mathfrak{h}, R)$, we define a functor:

\begin{gather*}
\text{Loc}^{\lambda}: \widehat{U(\mathfrak{g})^{\lambda}_{n,K}}-\text{mod} \to \widehat{\mathcal{D}^{\lambda}_{n,K}}-\text{mod}
\end{gather*}

given by $M \mapsto \widehat{\mathcal{D}^{\lambda}_{n,K}} \otimes_{\widehat{\mathcal{U}^{\lambda}_{n,K}}} M$.
\end{defn}

\subsection{Modules over completed enveloping algebras}

The adjoint action of $\textbf{G}$ on $\mathfrak{g}$ induces an action of $\textbf{G}$ on $U(\mathfrak{g})$ by algebra automorphisms. Composing the inclusion $U(\mathfrak{g})^{\textbf{G}} \to U(\mathfrak{g})$ with the projection $U(\mathfrak{g}) \to U(\mathfrak{t})$ defined by the direct sum decomposition $\mathfrak{g} = \mathfrak{n} \oplus \mathfrak{t} \oplus \mathfrak{n}^+$ yields the $\emph{Harish-Chandra}$ $\emph{homomorphism}$:

\begin{gather*}
\phi: U(\mathfrak{g})^{\textbf{G}} \to U(\mathfrak{t})
\end{gather*}

This is a morphism of deformable $R$-algebras, so by applying the deformation and $\pi$-adic completion functors, one obtains the $\emph{deformed Harish}$-$\emph{Chandra homomorphism}$:

\begin{gather*}
\widehat{\phi_{n,K}}: \widehat{U(\mathfrak{g})^\textbf{G}_{n,K}} \to \widehat{U(\mathfrak{t})_{n,K}}
\end{gather*}

which we will denote via the shorthand $\widehat{\phi}: Z \to \widetilde{Z}$. We have an action of the Weyl group $\textbf{W}$ on the dual Cartan subalgebra $\mathfrak{t}^*_K$ via the shifted dot-action:

\begin{gather*}
w \bullet \lambda = w(\lambda + \rho^{\prime}) - \rho^{\prime}
\end{gather*}

where $\rho^{\prime}$ is equal the half-sum of the T-roots on  $\mathfrak{n}^+$. Viewing $U(\mathfrak{t})_K$ as an algebra of polynomial functions on $\mathfrak{t}^*_K$, we obtain a dot-action of $\textbf{W}$ on $U(\mathfrak{t})_K$. This action preserves the $R$-subalgebra $U(\mathfrak{t})_n$ of $U(\mathfrak{t})_K$ and so extends naturally to an action of $\textbf{W}$ on $\widetilde{Z}$. \\

\begin{thm}\label{9.3}
Suppose that that $\textbf{G} = PGL_n$, $p|n$, and $n > 2$. Then: \\

(a) set $A := \widehat{U(\mathfrak{g})_{n,K}}$. The algebra $Z$ is contained in the centre of $A$. \\

(b) the map $\widehat{\phi}$ is injective, and its image is the ring of invariants $\widetilde{Z}^{\textbf{W}}$. \\

(c) the algebra $\widetilde{Z}$ is free of rank $|\textbf{W}|$ as a module over $\widetilde{Z}^{\textbf{W}}$. \\

(d) $\widetilde{Z}^{\textbf{W}}$ is isomorphic to a Tate algebra $K \langle S_1, \cdots, S_l \rangle$ as complete doubly filtered $K$-algebras.
\end{thm}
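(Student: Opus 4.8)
The plan is to reduce all four statements to assertions about the associated graded rings, where the decisive inputs are Theorem \ref{polynomial theorem} (that $S(\mathfrak{t}_k)^{\textbf{W}_k}$ is a polynomial ring) and Theorem \ref{freeness polynomial} (Kostant freeness), transported to the deformed, $\pi$-adically completed setting by the exactness computations of Proposition \ref{6.9} and Theorem \ref{6.10}. Part (a) is the easiest and is essentially formal: any $\textbf{G}$-invariant element of $U(\mathfrak{g})$ is killed by the differentiated $\mathfrak{g}$-action, hence commutes with the generators $\mathfrak{g}$ and so is central in $U(\mathfrak{g})$; the inclusion $U(\mathfrak{g})^{\textbf{G}} \subseteq Z(U(\mathfrak{g}))$ is $R$-linear and filtration-preserving, so it survives the deformation, $\pi$-adic completion and inverting $\pi$ functors, giving $Z \subseteq Z(A)$.

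For the other parts I would first record the picture on associated graded rings. By Proposition \ref{6.9} together with \cite[Corollary 3.7]{AW}, $\mathrm{Gr}(Z) \cong S(\mathfrak{t}_k)^{\textbf{W}_k}$, which is polynomial by Theorem \ref{polynomial theorem}, while $\mathrm{Gr}(\widetilde{Z}) = \mathrm{Gr}(\widehat{U(\mathfrak{t})_{n,K}}) = S(\mathfrak{t}_k)$; moreover $\mathrm{Gr}(\widehat{\phi})$ is, after these identifications, the inclusion $S(\mathfrak{t}_k)^{\textbf{W}_k} \hookrightarrow S(\mathfrak{t}_k)$ (the $\rho'$-shift in the dot-action lowers filtration degree, so does not affect symbols). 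In particular $\mathrm{Gr}(\widehat{\phi})$ is injective, so $\widehat{\phi}$ is injective since it is a map of complete filtered $K$-algebras; this proves the first half of (b). For the image, $\widehat{\phi}(Z) \subseteq \widetilde{Z}^{\textbf{W}}$ because the deformed completed Harish--Chandra homomorphism lands in the dot-invariants, and passing to symbols gives the chain $S(\mathfrak{t}_k)^{\textbf{W}_k} = \mathrm{Gr}(\widehat{\phi}(Z)) \subseteq \mathrm{Gr}(\widetilde{Z}^{\textbf{W}}) \subseteq \mathrm{Gr}(\widetilde{Z})^{\textbf{W}} = S(\mathfrak{t}_k)^{\textbf{W}_k}$ inside $S(\mathfrak{t}_k)$, which collapses to equalities; since $\widehat{\phi}(Z) \subseteq \widetilde{Z}^{\textbf{W}}$ are both complete for the induced filtration and have the same associated graded, they coincide. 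This is precisely where the hypothesis $p\mid n$ bites: one cannot average over $\textbf{W}$, so there is no direct argument that $\mathrm{Gr}$ commutes with $\textbf{W}$-invariants — the point is that Theorem \ref{polynomial theorem} forces the chain of inclusions above to collapse regardless.

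Part (c): by Theorem \ref{freeness polynomial}, since $S(\mathfrak{t}_k)^{\textbf{W}_k}$ is polynomial, $\mathrm{Gr}(\widetilde{Z}) = S(\mathfrak{t}_k)$ is free over $S(\mathfrak{t}_k)^{\textbf{W}_k} = \mathrm{Gr}(\widetilde{Z}^{\textbf{W}})$, and comparing fraction fields shows the rank equals $[\mathrm{Frac}\,S(\mathfrak{t}_k) : \mathrm{Frac}\,S(\mathfrak{t}_k)^{\textbf{W}_k}] = |\textbf{W}_k| = |\textbf{W}|$; lifting a homogeneous basis to $\widetilde{Z}$ and running the standard successive-approximation argument for complete filtered modules (the lifts span, and are independent because their symbols are) shows $\widetilde{Z}$ is free of rank $|\textbf{W}|$ over $\widetilde{Z}^{\textbf{W}}$. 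Part (d) then follows by combining (b) with the remark after Proposition \ref{6.9}: $\widetilde{Z}^{\textbf{W}} \cong Z = \widehat{U(\mathfrak{g})^{\textbf{G}}_{n,K}}$, whose associated graded is the polynomial ring $S(\mathfrak{t}_k)^{\textbf{W}_k} \cong k[\bar{S}_1,\dots,\bar{S}_l]$; lifting the $\bar{S}_i$ to homogeneous elements $S_i \in \widetilde{Z}^{\textbf{W}}$ and invoking completeness and the double filtration, exactly as in \cite[Theorem 6.10(a)]{AW}, identifies $\widetilde{Z}^{\textbf{W}}$ with the Tate algebra $K\langle S_1,\dots,S_l\rangle$. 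The main obstacle throughout is the failure of averaging when $p \mid |\textbf{W}|$, and the whole strategy is to replace averaging by the characteristic-free structural results Theorem \ref{polynomial theorem} and Theorem \ref{freeness polynomial}, propagated to the deformed completed algebras via Section \ref{annals chapter}.
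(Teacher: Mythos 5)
Your proof is correct and follows essentially the same strategy as the paper: pass to associated graded rings, invoke Proposition \ref{6.9} to identify $\mathrm{Gr}(\widehat{\phi})$ with $\psi_k$, use Theorem \ref{polynomial theorem} and Theorem \ref{freeness polynomial} to control $S(\mathfrak{t}_k)^{\textbf{W}_k}$, and lift via completeness. One welcome improvement: in part (b) the paper asserts without comment that $\mathrm{Gr}(\widetilde{Z}^{\textbf{W}})$ identifies with $S(\mathfrak{t}_k)^{\textbf{W}_k}$, but since $p \mid |\textbf{W}|$ one cannot average, so $\mathrm{Gr}$ commuting with $\textbf{W}$-invariants is not automatic; your chain $S(\mathfrak{t}_k)^{\textbf{W}_k} = \mathrm{Gr}(\widehat{\phi}(Z)) \subseteq \mathrm{Gr}(\widetilde{Z}^{\textbf{W}}) \subseteq \mathrm{Gr}(\widetilde{Z})^{\textbf{W}} = S(\mathfrak{t}_k)^{\textbf{W}_k}$ (valid because injectivity of $\mathrm{Gr}(\widehat{\phi})$ forces strictness) squeezes the needed equality and closes this gap.
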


\begin{proof}
(a): The algebra $U(\mathfrak{g})^{\textbf{G}}_K$ is central in $U(\mathfrak{g})_K$ via \cite[Lemma 23.2]{Hu3}. Since $U(\mathfrak{g})_K$ is dense in $A$, it is also contained in the centre of $A$. But $U(\mathfrak{g})^{\textbf{G}}_K$ is also dense in $Z$, and so $Z$ is central in $A$. \\

(b): By the Harish-Chandra homomorphism (see \cite[Theorem 7.4.5]{Di}), $\phi$ sends $U(\mathfrak{g})^{\textbf{G}}_K$ onto $U(\mathfrak{t})^{\textbf{W}}_K$, and so $\widehat{\phi}(Z)$ is contained in $\widetilde{Z}^{\textbf{W}}$. This is a complete doubly filtered algebra whose associated graded ring $\text{Gr}(\widetilde{Z}^{\textbf{W}})$ can be identified with $S(\mathfrak{t}_k)^{\textbf{W}_k}$. This induces a morphism of complete doubly filtered $K$-algebras $\alpha: Z \to \widetilde{Z}^{\textbf{W}}$. Its associated graded map $\text{Gr}(\alpha): \text{Gr}(Z) \to \text{Gr}(\widetilde{Z}^{\textbf{W}})$ can be identified with the isomorphism $\psi_k: S(\mathfrak{g}_k)^{\textbf{G}_k} \to S(\mathfrak{t}_k)^{\textbf{W}_k}$ by Proposition \ref{6.9}. Hence $\text{Gr}(\alpha)$ is an isomorphism, and so $\alpha$ is an isomorphism by completeness. \\

(c): By Theorem \ref{polynomial theorem} and Theorem \ref{G2 polynomial}, $S(\mathfrak{t}_k)$ is a free graded $S(\mathfrak{t}_k)^{\textbf{W}_k}$-module of rank $|\textbf{W}|$. Hence, by \cite[Lemma 3.2(a)]{AW}, $\widetilde{Z}$ is finitely generated over $Z$, and in fact is free of rank $|\textbf{W}|$. \\

(d): By Theorem \ref{polynomial theorem} and Theorem \ref{G2 polynomial}, $S(\mathfrak{t}_k)^{\textbf{W}_k}$ is a polynomial algebra in $l$ variables. Fix double lifts $s_1, \cdots, s_l \in U(\mathfrak{t})^{\textbf{W}}$ of these generators, as in the proof of Proposition \ref{6.9}. Define an $R$-algebra homomorphism $R[S_1, \cdots, S_l] \to \widetilde{Z}^{\textbf{W}}$ which sends $S_i$ to $s_i$. This extends to an isomorphism $K \langle S_1, \cdots, S_l \rangle \to \widetilde{Z}^{\textbf{W}}$ of complete doubly filtered $K$-algebras.
\end{proof} 

We identify the $k$-points of the scheme $\mathfrak{g}^* := \text{Spec}(\text{Sym}_R \mathfrak{g})$ with the dual of the $k$-vector space $\mathfrak{g}$, so $\mathfrak{g}^*(k) = \mathfrak{g}^*_k$. Let $G$ denote the $k$-points of the algebraic group scheme $\textbf{G}$. $G$ acts on $\mathfrak{g}_k$ and $\mathfrak{g}^*_k$ via the adjoint and coadjoint action respectively.

Recall the definition of the enhanced moment map $\beta: \widetilde{T^*\mathcal{B}}(k) \to \mathfrak{g}^*_k$ from Definition \ref{enhanced moment map}. Given $y \in \mathfrak{g}^*_k$, write $G.y$ to denote the $G$-orbit of $y$ under the coadjoint action. We write $\mathcal{N}$ (resp. $\mathcal{N}^*$) to denote the nilpotent cone (resp. dual nilpotent cone) of the $k$-vector spaces $\mathfrak{g}_k$ and $\mathfrak{g}^*_k$. \\

\begin{prop}\label{9.8}
Suppose $p$ is nonspecial for $G$. For any $y \in \mathcal{N}^*$, we have $\text{dim } \beta^{-1}(y) = \text{dim } \mathcal{B} - \frac{1}{2} \text{dim } G.y$.
\end{prop}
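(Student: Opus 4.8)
The plan is to compute the fibre dimension of the enhanced moment map $\beta$ by relating it to the auxiliary variety $S$ introduced in the proof of Proposition \ref{codimension prop} and then invoking the dimension formula of \cite[Proposition 2]{HS}, exactly as was done there for the ordinary projection $p_2: S \to \mathcal{N}^*$. First I would recall that $\widetilde{T^*\mathcal{B}}(k)$ may be identified with $\widetilde{\mathcal{B}} \times_{\mathcal{B}} (\text{something})$, but more to the point, unwinding Definition \ref{enhanced moment map} shows that for $y \in \mathfrak{g}^*_k$ the fibre $\beta^{-1}(y)$ maps onto the set of Borel subalgebras $\mathfrak{b}$ (equivalently points $gB \in G/B$) such that $y$ annihilates $\text{Ad}_g(\mathfrak{b})$, i.e. $g^{-1} \cdot y \in \mathfrak{b}^{\perp}$; in other words $\beta^{-1}(y)$ is a torsor over $p_1(p_2^{-1}(y)) = \{gB \mid g^{-1} \cdot y \in \mathfrak{b}^{\perp}\}$ with fibres a fixed affine space (the enhancing data contributes a constant-dimensional piece coming from $\mathfrak{b}^{\perp}$, which one checks does not vary). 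So $\text{dim } \beta^{-1}(y)$ differs from $\text{dim } p_1(p_2^{-1}(y))$ by a constant, and one pins down that constant by testing on a regular $y$.

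The key steps, in order: (1) identify $\widetilde{T^*\mathcal{B}}$ concretely as $G \times_N \mathfrak{n}^{\perp}$-type data and compute $\beta$ in coordinates, so that $\beta^{-1}(y)$ fibres over $\{gB : g^{-1}\cdot y \in \mathfrak{b}^\perp\}$ with constant fibre dimension $c$; (2) invoke \cite[Proposition 2]{HS} to get $\text{dim}\{gB : g^{-1}\cdot y \in \mathfrak{b}^\perp\} = \tfrac{1}{2}(\text{dim } Z_G(y) - r)$; (3) combine to get $\text{dim }\beta^{-1}(y) = c + \tfrac12(\text{dim } Z_G(y) - r)$, and use $\text{dim } G.y = \text{dim } G - \text{dim } Z_G(y)$; (4) determine $c$ by evaluating both sides at a regular nilpotent $y$ (which exists since $p$ is nonspecial, by \cite[Section 6.4]{GKM}), where $\beta^{-1}(y)$ is a single fibre of the moment map over a point whose preimage in $T^*\mathcal{B}$ is a point, forcing $\text{dim }\beta^{-1}(y) = \dim \mathcal{B} - \tfrac12 \dim G.y$ to hold there and hence fixing $c = \dim\mathcal B - \tfrac12(\dim G - r) + \tfrac12(\dim Z_G(y_{\mathrm{reg}}) - r) = \dim\mathcal B - \tfrac12(\dim G - r)$ after simplification; (5) substitute back to conclude $\text{dim }\beta^{-1}(y) = \dim\mathcal B - \tfrac12 \dim G.y$ for all $y \in \mathcal{N}^*$.

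The main obstacle I expect is step (1): making precise the relationship between the enhanced cotangent bundle $\widetilde{T^*\mathcal{B}}$ and the variety $S$, and verifying that the ``extra'' enhancing directions contribute a genuinely constant-dimensional piece to every fibre $\beta^{-1}(y)$ independent of $y$ — in particular that $\beta$ restricted to these directions is an affine-space bundle over $p_1(p_2^{-1}(y))$ rather than something whose dimension jumps. This amounts to carefully chasing through the identification $\widetilde{T^*\mathcal{B}} \cong G \times_N (\mathfrak{g}/\mathfrak{n})^*$ (the basic-affine-space description) and checking that $\beta$ sends $(gN, \xi)$ to $g\cdot(\text{lift of }\xi)$; once that is in hand the rest is the dimension count above together with the normality/resolution results already established, which guarantee the regular fibre is a single point so that step (4) genuinely determines $c$. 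A secondary technical point is confirming that $Z_G(y)$ for $y$ in the dual nilcone has the expected dimension behaviour across all orbits, but this is supplied by the orbit decomposition of \cite[Section 6.4]{GKM} and the finiteness of orbits from \cite{Xu2}, \cite[Proposition 7.1]{Xu3}.
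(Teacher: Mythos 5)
The paper's own proof is a one-line citation: \cite[Theorem 10.11]{JN} gives exactly this dimension formula for the Springer fibre over $\mathcal{N}$, and the statement for $\mathcal{N}^*$ follows by transferring along the $G$-equivariant isomorphism $\kappa: \mathcal{N} \to \mathcal{N}^*$. Your route is genuinely different — a direct dimension count via the auxiliary variety $S$ and the formula from [HS] — and could in principle be made to work, but as written it contains a concrete error in the model of $\widetilde{T^*\mathcal{B}}$ that would produce the wrong constant.

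The issue is that $G \times_N (\mathfrak{g}/\mathfrak{n})^*$ is $T^*\widetilde{\mathcal{B}} = T^*(G/N)$, a bundle over $\widetilde{\mathcal{B}}$; but the paper's $\widetilde{T^*\mathcal{B}}$ is the vector bundle over $\mathcal{B}$ dual to $\widetilde{\mathcal{T}} = (\xi_*\mathcal{T}_{\widetilde{\mathcal{B}}})^{\textbf{H}}$, i.e.\ the quotient of $T^*(G/N)$ by the free $\textbf{H}$-action. Its fibre over $gB$ is $\{\xi \in \mathfrak{g}^* : \xi(\operatorname{Ad}_g \mathfrak{n}) = 0\}$ and $\beta$ is simply the second projection, so for fixed $y$ and fixed $gB$ there is at most one point of $\beta^{-1}(y)$, and $\beta^{-1}(y)$ is already in bijection with $\{gB : g^{-1}\cdot y \in \mathfrak{n}^\perp\}$. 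There is no affine-space fibre and no constant $c$ to determine. If instead you stick to your stated model $T^*(G/N)$, the fibre over a regular $y$ is an $\textbf{H}$-orbit of dimension $r$, not a point, so step (4) would give $c = r$ rather than $c = 0$ and your final answer would be off by $r$. Thus step (4) only appears to ``fix'' $c$; it is implicitly relying on the quotiented model you did not write down. You also elide the point that $\{gB : g^{-1}\cdot y \in \mathfrak{n}^\perp\}$ and the Springer-type set $\{gB : g^{-1}\cdot y \in \mathfrak{b}^\perp\}$ appearing in [HS] agree; this holds for $y \in \mathcal{N}^*$ because a nilpotent element of a Borel lies in its nilradical, transported across $\kappa$, but it needs to be said. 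Once both points are repaired your computation gives $\dim\beta^{-1}(y) = \tfrac12(\dim Z_G(y) - r) = \dim\mathcal{B} - \tfrac12\dim G\cdot y$ directly, which is a more self-contained argument than the paper's citation, at the cost of being substantially longer.
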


\begin{proof}
This is stated for $\mathcal{N}$ as \cite[Theorem 10.11]{JN}. The result follows by applying the $G$-equivariant bijection $\kappa: \mathcal{N} \to \mathcal{N}^*$ from \cite[Section 6.5]{JN}.
\end{proof}

We now let $\mathfrak{g}_{\mathbb{C}}$ denote the complex semisimple Lie algebra with the same root system as $G$, and let $G_{\mathbb{C}}$ be the corresponding adjoint algebraic group. By \cite[Remark 4.3.4]{CM}, there is a unique non-zero nilpotent $G_{\mathbb{C}}$-orbit in $\mathfrak{g}^*_{\mathbb{C}}$, under the coadjoint action, of minimal dimension. Since each coadjoint $G_{\mathbb{C}}$-orbit is a symplectic manifold, it follows that each of these dimensions is an even integer. We set:

\begin{gather*}
r := \frac{1}{2} \text{min } \lbrace \text{dim } G_{\mathbb{C}} \cdot y \mid 0 \neq y \in \mathfrak{g}_{\mathbb{C}} \rbrace
\end{gather*}

\begin{prop}\label{9.9}
For any non-zero $y \in \mathcal{N}^*, \frac{1}{2} \text{dim } G \cdot y \geq r$, with no restrictions on $(G,p)$.
\end{prop}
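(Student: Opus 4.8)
The plan is to reduce the positive-characteristic statement to the known fact over $\mathbb{C}$ about minimal nilpotent orbit dimensions by comparing orbit dimensions through the common root system. The key observation is that the dimension of a nilpotent (coadjoint) orbit $G \cdot y$ is controlled by the dimension of the centraliser $Z_G(y)$, and that these centraliser dimensions can be bounded below in a characteristic-independent way. First I would recall, from the proof of Proposition \ref{codimension prop} and the reference \cite[Section 6.4]{GKM}, that every $y \in \mathcal{N}^*$ is coadjoint-conjugate to an element whose centraliser is governed by combinatorial data attached to the root system $R$ of $G$; in particular, since $p$ is nonspecial the stratification of $\mathcal{N}^*$ by orbits matches the one over fields of characteristic zero, so each orbit $G \cdot y$ in positive characteristic ``is'' one of the finitely many nilpotent orbits of $G_{\mathbb{C}}$ in the sense that it corresponds to the same weighted Dynkin datum.

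Next I would use the dimension formula: for any $y \in \mathcal{N}^*$ we have $\dim G \cdot y = \dim G - \dim Z_G(y)$, and the analogous identity holds over $\mathbb{C}$. Combining this with Proposition \ref{9.8}, which gives $\dim \beta^{-1}(y) = \dim \mathcal{B} - \tfrac12 \dim G \cdot y$ and forces $\dim G \cdot y$ to be even, the claim $\tfrac12 \dim G \cdot y \ge r$ becomes the statement that no nonzero nilpotent orbit in $\mathfrak{g}^*_k$ has dimension strictly smaller than the minimal nonzero nilpotent orbit dimension over $\mathbb{C}$. To see this, I would argue that if $0 \ne y \in \mathcal{N}^*$, then the corresponding orbit over $\mathbb{C}$ is also nonzero (a zero orbit over $\mathbb{C}$ would pull back to the zero orbit, since the orbit-type stratification is the same), and hence has dimension at least $2r$ by definition of $r$; the equality of dimensions across characteristics for nonspecial $p$ then yields $\dim G \cdot y \ge 2r$.

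The step I expect to be the main obstacle is justifying the characteristic-independence of the nilpotent orbit dimensions — i.e., that for each nonzero nilpotent orbit in positive characteristic, its dimension equals that of the ``same'' orbit over $\mathbb{C}$. In the very good characteristic case this is classical, but here $p$ is merely nonspecial, and one must invoke the finer results on the orbit stratification: I would lean on \cite[Section 6.4]{GKM} together with \cite{Xu2} and \cite[Proposition 7.1]{Xu3}, which describe the coadjoint nilpotent orbits in $\mathfrak{g}^*_k$ and their dimensions for $PGL_n$ with $p \mid n$, and check that the minimal nonzero dimension agrees with $2r$. An alternative, possibly cleaner route avoiding a case-by-case check: since $r$ is defined purely from the complex group with the same root system, and the minimal nonzero orbit over $\mathbb{C}$ is the orbit of a highest root vector of dimension $2h^{\vee}-2$ (or the relevant value from \cite[Remark 4.3.4]{CM}), one can exhibit explicitly that any nonzero nilpotent $y$ in characteristic $p$ has $\dim Z_G(y) \le \dim G - 2r$ by a direct argument with root subgroups, using that for a nonzero nilpotent the centraliser cannot contain a full Borel, hence cannot have dimension $\dim G - \dim \mathcal{B} \cdot 2$ exceeded; this is the argument I would flesh out if the reference-based reduction proves too delicate. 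In either case, the phrase ``with no restrictions on $(G,p)$'' in the statement should be read in light of the standing hypothesis $\textbf{G} = PGL_n$, $p\mid n$, $n>2$ of the section, so $p$ is automatically nonspecial and the preceding propositions apply.
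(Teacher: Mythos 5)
Your approach is genuinely different from the paper's. The paper does not attempt to identify nilpotent coadjoint orbits across characteristics; instead it uses Proposition \ref{9.8} together with \cite[Theorem 2]{HS} to convert the claim into the inequality $\dim Z_G(y) \leq \dim G - 2r$, and then bounds the centraliser dimension directly, type by type: via the $GL_n$ embedding in type $A$, via Hesselink's formula $\dim Z_G(y) = \sum_i (im_i - \chi_V(m_i))$ for the Jordan block data in the remaining classical types, and via Liebeck--Seitz tables for the exceptional and low-rank classical cases. Both routes ultimately reduce to a centraliser dimension bound, but the paper establishes that bound by explicit computation rather than by comparison with $\mathbb{C}$.

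The gap in your proposal is precisely the one you flag as ``the main obstacle,'' and it is not a technicality to be deferred: the assertion that every nilpotent coadjoint orbit in characteristic $p$ corresponds to a complex orbit of the same dimension is the entire content of the proposition, and it is false in general. For bad $p$, nilpotent orbits can split or acquire different centraliser dimensions, and the number of orbits can change; even for nonspecial $p$ the matching of weighted Dynkin data is delicate. The references \cite{Xu2}, \cite{Xu3} you invoke address $PGL_n$ with $p|n$, but the proposition is stated for all split semisimple $G$, and the paper's case-by-case treatment correspondingly covers every Dynkin type and every prime. Relying on a characteristic-independence statement you would still need to prove leaves the argument circular.

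Your alternative sketch also does not work as stated. From ``$Z_G(y)$ does not contain a full Borel'' one cannot conclude $\dim Z_G(y) \leq \dim G - 2\dim\mathcal{B} = \operatorname{rk}(G)$; centralisers of nonzero nilpotents routinely exceed the rank (the subregular orbit has centraliser of dimension $\operatorname{rk}(G) + 2$, and smaller orbits have even larger centralisers). The bound $\dim Z_G(y) \leq \dim G - 2r$ is exactly what requires the explicit estimates the paper carries out, and it does not follow from the Borel observation. What your proposal gets right is the reduction of the inequality to a centraliser dimension bound via Proposition \ref{9.8}; what it lacks is the actual proof of that bound.
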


\begin{proof}
We will demonstrate that this inequality holds for all split semisimple algebraic groups $G$ defined over an algebraically closed field $k$ of positive characteristic. When the characteristic $p$ is small, we will proceed via a case-by-case calculation of the maximal dimension of the centraliser $Z_G(y)$ of $y \in \mathcal{N}^*$. \\

By Proposition \ref{9.8}, $\text{dim } \beta^{-1}(y) = \text{dim } \mathcal{B} - \frac{1}{2} \text{dim } G \cdot y$. We may assume $y \in \mathcal{N}$ and $G$ acts on $\mathfrak{g}$ via the adjoint action by \cite[Section 6.5]{JN}. By \cite[Theorem 2]{HS}, we see that:

\begin{gather*}
\text{dim }\beta^{-1}(y) = \frac{1}{2}(\text{dim } Z_G(y) - \text{rk }(G))
\end{gather*}

where $Z_G(y)$ denotes the centraliser of $y$ in $G$. Hence it suffices to demonstrate that the following inequality:

\begin{gather*}
\text{dim } \mathcal{B} - \frac{1}{2}(\text{dim } Z_G(y) - \text{rk }(G)) \geq r
\end{gather*}

holds in all types. We evaluate on a case-by-case basis, aiming to find the maximal dimension of the centraliser. We first note that, using the work of \cite[1.6]{Sm1}, we have the following table:

\begin{center}
\begin{tabular}{ c|c|c }
 Type & dim $\mathcal{B}$ & $r$ \\
 \hline
 $A_n$ & $1/2n(n+1)$ & $n$ \\ 
 $B_n$ & $n^2$ & $2n-2$ \\  
 $C_n$ & $n^2$ & $n$ \\
 $D_n$ & $n^2 - n$ & $2n-3$ \\
 $E_6$ & 36 & 11 \\
 $E_7$ & 63 & 17 \\
 $E_8$ & 120 & 29 \\
 $F_4$ & 24 & 8 \\
 $G_2$ & 6 & 3   
\end{tabular}
\end{center}

By \cite[Theorem 2.33]{LS}, when $p$ is nonspecial, the dimension of the centraliser is independent of the isogeny type of $G$. \\

Since $p$ is always nonspecial for a group of type $A$, it therefore suffices to consider $Z_{\mathfrak{sl}_n}(y)$. Since $p$ is good and $SL_n$ is a simply connected algebraic group, by \cite[Lemma 2.15]{LS}, it suffices to consider the centraliser of a non-identity unipotent element in $SL_n$. Via the identification $GL_n(k) = SL_n(k)Z(GL_n(k))$, it is sufficient to compute $Z_{GL_n(k)}(u)$, for some unipotent matrix $u$. This dimension is bounded above by $n^2$, the dimension of $GL_n(k)$ as an algebraic group, and so we have the expression:

\begin{gather*}
\text{dim } \mathcal{B} - \frac{1}{2}(\text{dim } Z_G(y) - \text{rk }(G)) \\
\geq  \frac{1}{2} n(n+1) - \frac{1}{2}(n^2 - n)
\geq n.
\end{gather*}

Hence the inequality is verified in type $A$. \\

For the remaining classical groups, view $y \in \mathcal{N}$ as a nilpotent matrix, which without loss of generality may be taken to be in Jordan normal form. Let $m_1 \geq \cdots \geq m_r$ be the sizes of the Jordan blocks, with $\sum_{i=1}^r m_i = n$, the rank of the group. By \cite[Theorem 4.4]{He}, we have:

\begin{gather*}
\text{dim } Z_G(y) = \sum_{i=1}^r (im_i - \chi_V(m_i))
\end{gather*}

where $\chi_V$ is a function $\chi_V: \mathbb{N} \to \mathbb{N}$. It follows that:

\begin{gather*}
\text{dim } Z_G(y) \leq \sum_{i=1}^r im_i = \sum_{j=1}^n \sum_{i=j}^r m_i.
\end{gather*}

Since $m_1 \geq \cdots \geq m_r$ by construction, the maximum value of this sum is attained when $m_k = 1$ for all $k$. Hence we obtain the inequality $\text{dim } Z_G(y) \leq \frac{1}{2}n(n+1)$. Using this, it is easy to see that the required inequality holds except possibly in the cases $B_2, B_3, D_4$ and $D_5$. \\

For these cases, along with all exceptional cases, we directly verify that the inequality holds using the calculations on dimensions of centralisers in \cite[Chapter 8 and Chapter 22]{LS}. \\
\end{proof}

This allows us to prove our generalisation of \cite[Theorem 9.10]{AW}; a result on the minimal dimension of finitely generated modules over $\pi$-adically completed enveloping algebras. \\ 

\begin{defn}\label{canonical dimension def}
Let $A$ be a Noetherian ring. $A$ is $\emph{Auslander-Gorenstein}$ if the left and right self-injective dimension of $A$ is finite and every finitely generated left or right $A$-module $M$ satisfies, for $i \geq 0$ and every submodule $N$ of $\text{Ext}^i_A(M,A)$, $\text{Ext}^j_A(N,A) = 0$ for $j < i$. \\

In this case, the $\emph{grade}$ of $M$ is given by:

\begin{gather*}
j_A(M) := \text{inf} \lbrace j \mid \text{Ext}^j_A(M,A) \neq 0 \rbrace
\end{gather*}

and the $\emph{canonical dimension}$ of $M$ is given by:

\begin{gather*}
d_A(M) := \text{inj.dim}_A(A) - j_A(M).
\end{gather*}
\end{defn}

By the discussion in \cite[Section 9.1]{AW}, the ring $\widehat{U(\mathfrak{g})_{n,K}}$ is Auslander-Gorenstein and so it makes sense to define the canonical dimension function:

\begin{gather*}
d: \lbrace \text{finitely generated } \widehat{U(\mathfrak{g})_{n,K}}-\text{modules} \rbrace \to \mathbb{N}.
\end{gather*}

\begin{thm}\label{9.10}
Suppose $n > 0$ and let $M$ be a finitely generated $\widehat{U(\mathfrak{g})_{n,K}}$-module with $d(M) \geq 1$. Then $d(M) \geq r$.
\end{thm}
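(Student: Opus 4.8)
The plan is to run the standard Gabber-style dimension argument for almost commutative filtered rings, exactly as in the proof of \cite[Theorem 9.10]{AW}, but feeding in Proposition \ref{9.9} (which now holds with no restriction on $(G,p)$) in place of the characteristic-dependent input used there. First I would recall that $A := \widehat{U(\mathfrak{g})_{n,K}}$ is a complete doubly filtered $K$-algebra whose associated graded ring is $\text{Gr}(A) \cong S(\mathfrak{g}_k) = \mathcal{O}(\mathfrak{g}_k^*)$, a commutative affine $k$-algebra of dimension $\dim \mathfrak{g}$. Given a finitely generated $A$-module $M$, choose a good filtration; then $\text{Gr}(M)$ is a finitely generated $\text{Gr}(A)$-module, so its support $Z := \text{Supp}(\text{Gr}(M)) \subseteq \mathfrak{g}_k^*$ is a closed subvariety, and the Auslander-Gorenstein formalism (Definition \ref{canonical dimension def}) identifies $d(M) = \dim Z$. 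The condition $d(M) \geq 1$ says $Z$ is positive-dimensional, i.e. $Z$ is not supported at the origin alone.

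The key geometric input is that $Z$ is stable under the coadjoint action of $G$ and conic (stable under scaling), since the filtration on $A$ is $\textbf{G}$-equivariant and the central subalgebra $Z$ of Theorem \ref{9.3} acts through its augmentation after passing to $\text{Gr}$; more precisely one uses that $\text{Gr}(A)$ is acted on by $G$ and that the characteristic variety of any finitely generated module is a $G$-stable conic closed subset. Now I would invoke the involutivity (integrability) of characteristic varieties — Gabber's theorem — which applies because $A$ is almost commutative: $Z$ is a coisotropic, hence in particular a union of closures of coadjoint orbits of even dimension whenever $Z \cap \mathcal{N}^*$ is involved, but more to the point, since $Z$ is conic and $G$-stable and positive-dimensional, it must contain a nonzero nilpotent coadjoint orbit $G \cdot y$ in its boundary (take any nonzero point of $Z$, scale it to the origin; the orbit through a generic nonzero point of the resulting one-parameter degeneration lies in $\mathcal{N}^*$). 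Hence $\dim Z \geq \dim \overline{G \cdot y}$ for some $0 \neq y \in \mathcal{N}^*$.

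The final step combines this with Proposition \ref{9.9}. Gabber's involutivity gives more than $\dim Z \geq \dim G\cdot y$: because $Z$ is coisotropic with respect to the Poisson structure on $\mathcal{O}(\mathfrak{g}_k^*)$ and contains $G \cdot y$, one gets $\dim Z \geq \frac{1}{2}(\dim \mathfrak{g} + \dim G \cdot y) \geq \dim \mathcal{B} + \frac{1}{2}\dim G\cdot y$ — but in fact for the present statement it is enough to bound $\dim Z$ below by $\frac{1}{2}\dim G\cdot y$, which follows already from $Z \supseteq \overline{G \cdot y}$ together with the fact that a coadjoint orbit has dimension at least $\frac12\dim G\cdot y$ trivially, or directly from the coisotropic bound. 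Then Proposition \ref{9.9} gives $\frac{1}{2}\dim G \cdot y \geq r$, so $d(M) = \dim Z \geq r$, as desired.

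The main obstacle, and the step requiring the most care, is justifying that the characteristic variety $Z$ of a finitely generated $A$-module genuinely meets $\mathcal{N}^*$ in a nonzero point once $d(M) \geq 1$: one must control the interaction between the central subalgebra $Z \subset A$ (Theorem \ref{9.3}(a)), whose graded version is $S(\mathfrak{g}_k)^{\textbf{G}_k}$, and the support of $\text{Gr}(M)$. The clean way is to note that $\text{Gr}(A) \otimes_{S(\mathfrak{g}_k)^{\textbf{G}_k}} k \cong \mathcal{O}(\mathcal{N}^*)$ by Theorem \ref{6.10}(c) and Theorem \ref{main theorem 1}, so after restricting to a central fibre the support of $\text{Gr}(M)$ lands in $\mathcal{N}^*$; provided this fibre is still positive-dimensional (which one arranges, or handles by a separate argument when $\text{Gr}(M)$ has full support in the central direction, in which case $d(M) \geq \dim S(\mathfrak{g}_k)^{\textbf{G}_k} = l \geq r$ directly), the orbit argument above applies and the proof concludes. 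This mirrors the structure of \cite[Section 9]{AW}, with Proposition \ref{9.9} supplying the characteristic-free orbit-dimension bound.
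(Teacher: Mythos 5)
There is a genuine gap in your argument: you assert that the characteristic variety $Z = \operatorname{Supp}(\operatorname{Gr}(M))$ is $G$-stable, but this is false for a general finitely generated $\widehat{U(\mathfrak{g})_{n,K}}$-module, and nothing in the equivariance of the filtration on $A$ makes it so (the module $M$ itself carries no $\textbf{G}$-action). Your entire degeneration argument --- scale a nonzero point of $Z$, land in $\mathcal{N}^*$, extract a full coadjoint orbit $G\cdot y$ inside $Z$, and then bound $\dim Z \geq \dim G\cdot y$ --- depends essentially on this false premise. Note the paper's own conclusion is the weaker $d(M) \geq \tfrac{1}{2}\dim G\cdot y$, not $d(M) \geq \dim G\cdot y$; that by itself is a signal that no full orbit is being found inside the characteristic variety, only a single point.

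The paper's actual proof circumvents $G$-stability entirely by working on the $\mathcal{D}$-module side. After reducing to the case where $M$ is a $\widehat{\mathcal{U}^{\lambda}_{n,K}}$-module (via \cite[Proposition 9.4, Theorem 9.5]{AW}) so that $Y := \operatorname{Ch}(M) \subseteq \mathcal{N}^*$, it applies the localisation functor $\mathcal{M} := \operatorname{Loc}^{\lambda}(M)$ of Definition \ref{twisted localisation} and uses the Beilinson--Bernstein compatibility $\beta(\operatorname{Ch}(\mathcal{M})) = \operatorname{Ch}(M)$ from \cite[Corollary 6.12]{AW}, which is where Theorem \ref{6.10} and Corollary \ref{global sections isomorphism} enter. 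Gabber's theorem is then invoked not as an involutivity statement on $Z$ but through \cite[Theorem 7.5]{AW}, a Zariski tangent-space bound $\dim T_{X,x} \geq \dim\mathcal{B}$ at smooth points $x$ of $X := \operatorname{Ch}(\mathcal{M}) \subseteq T^*\mathcal{B}$. Picking a single nonzero smooth point $y \in Y$ and a smooth $x \in \beta^{-1}(y) \cap X$, the differential of $\beta|_X$ gives $\dim Y \geq \dim T_{X,x} - \dim\beta^{-1}(y) \geq \dim\mathcal{B} - \dim\beta^{-1}(y)$, and Propositions \ref{9.8} and \ref{9.9} convert the right-hand side into $\tfrac{1}{2}\dim G\cdot y \geq r$. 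To repair your proposal you would need to replace the ``$Z$ is $G$-stable and conic'' step with this localisation-plus-moment-map mechanism; the coisotropic/Poisson heuristics you gesture at are in the right spirit but do not by themselves produce the fibre-dimension identity of Proposition \ref{9.8}, which is the load-bearing numerical input.
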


\begin{proof}
By \cite[Proposition 9.4]{AW}, we may assume that $M$ is $Z$-locally finite. We may also assume that $M$ is a $\widehat{\mathcal{U}^{\lambda}_{n,K}}$-module for some $\lambda \in \mathfrak{h}^*_K$, by passing to a finite field extension if necessary and applying \cite[Theorem 9.5]{AW}. \\

By Proposition $\ref{9.3}$(b), $\lambda \circ (i \circ \widehat{\phi}) = (w \bullet \lambda) \circ (i \circ \widehat{\phi})$ for any $w \in \textbf{W}$. Hence we may assume $\lambda$ is $\rho$-dominant by \cite[Lemma 9.6]{AW}. Hence $\text{Gr }(M)$ is a $\text{Gr }(\widehat{\mathcal{U}^{\lambda}_{n.K}}) \cong S(\mathfrak{g}_k) \otimes_{S(\mathfrak{g}_k)^{\textbf{G}_k}} k$-module by Theorem $\ref{6.10}$. If $\mathcal{M} := \text{Loc}^{\lambda}(M)$ is the corresponding coherent $\widehat{\mathcal{D}^{\lambda}_{n,K}}$-module in the sense of Definition \ref{twisted localisation}, then $\beta(\text{Ch}(\mathcal{M})) = \text{Ch}(M)$ via \cite[Corollary 6.12]{AW}. \\

Let $X$ and $Y$ denote the $k$-points of the characteristic varieties $\text{Ch}(\mathcal{M})$ and $\text{Ch}(M)$ respectively. Now $\text{Gr}(M)$ is annihilated by $S^+(\mathfrak{g}_k)^{\textbf{G}_k}$, and so $Y \subseteq \mathcal{N}^*$. We see that the map $\beta: T^*\mathcal{B} \to \mathfrak{g}$ maps $X$ onto $Y$. \\

Let $f: X \to Y$ be the restriction of $\beta$ to $X$. By \cite[Corollary 9.1]{AW}, since $\text{dim } Y = d(M) \geq 1$ we can find a non-zero smooth point $y \in Y$. By surjectivity, we have a smooth point $x \in f^{-1}(y)$. The induced differential $df_x: T_{X,x} \to T_{Y,y}$ on Zariski tangent spaces yields the inequality:

\begin{gather*}
\text{dim } Y + \text{dim } f^{-1}(y) \geq \text{dim } T_{X,x}
\end{gather*}

By \cite[Theorem 7.5]{AW}, $\text{dim } T_{X,x} \geq \text{dim } \mathcal{B}$. Hence:

\begin{gather*}
d(M) = \text{dim } Y \geq \text{dim } \mathcal{B} - \text{dim } \beta^{-1}(y)
\end{gather*}

By Proposition $\ref{9.8}$ and Proposition $\ref{9.9}$, the RHS equals $r$.
\end{proof}

$\textbf{Proof of Theorem C:}$ This follows from Theorem \ref{9.10} and \cite[Section 10]{AW} in the split semisimple case. We may then apply the same argument as in \cite{AJ} to remove the split hypothesis on the Lie algebra.

\bibliographystyle{plain}
\bibliography{Dmodulesbib}

\end{document}